\setlist[enumerate]{label = \textup{\textup{(\alph*)}},ref = \textup{(\alph*})}
\theoremstyle{plain}% default
\newtheorem{thm}{Theorem}[section]
\newtheorem{prop}[thm]{Proposition}
\newtheorem{cor}[thm]{Corollary}
\theoremstyle{definition}
\newtheorem{dfn}[thm]{Definition}
\newtheorem{ex}[thm]{Example}
\newtheorem{rem}[thm]{Remark}
\numberwithin{equation}{section}
\numberwithin{figure}{section}
\numberwithin{table}{section}
\def\N{\mathbb{N}}
\def\Z{\mathbb{Z}}
\def\R{\mathbb{R}}
\def\C{\mathbb{C}}
\def\H{\mathbb{H}}
\def\K{\mathbb{K}}
\DeclareMathOperator\id{id}
\DeclareMathOperator\ev{ev}
\DeclareMathOperator\Hom{Hom}
\DeclareMathOperator\End{End}
\DeclareMathOperator\Aut{Aut}
\let\Im\relax\DeclareMathOperator\Im{Im}
\DeclareMathOperator\Ker{Ker}
\DeclareMathOperator\Coker{Coker}
\DeclareMathOperator\Spec{Spec}
\def\O{\mathrm{O}}
\def\SO{\mathrm{SO}}
\def\Spin{\mathrm{Spin}}
\def\U{\mathrm{U}}
\def\SU{\mathrm{SU}}
\def\g{{\mathfrak g}}
\def\h{{\mathfrak h}}
\def\Ad{\mathrm{Ad}}
\def\cA{\mathcal{A}}
\def\cD{\mathcal{D}}
\def\cJ{\mathcal{J}}
\def\cR{\mathcal{R}}
\def\cV{\mathcal{V}}
\def\al{\alpha}
\def\be{\beta}
\def\ga{\gamma}
\def\de{\delta}
\def\io{\iota}
\def\la{\lambda}
\def\si{\sigma}
\def\om{\omega}
\def\De{\Delta}
\def\La{\Lambda}
\def\Ga{\Gamma}
\def\Si{\Sigma}
\def\op{\oplus}
\def\ot{\otimes}
\def\t{\times}
\def\6{\partial}
\def\d{{\rm d}}
\def\na{\nabla}
\def\iy{\infty}
\def\longra{\longrightarrow}
\def\an#1{\langle #1 \rangle}
\def\ban#1{\bigl\langle #1 \bigr\rangle}
\def\ol{\overline}
\def\es{\emptyset}
\def\ab{\allowbreak}
\def\bs{\boldsymbol}
\def\ge{\geqslant}
\def\le{\leqslant}
\def\e#1\e{\begin{equation}#1\end{equation}}
\newcommand{\ea}{\@ifstar{\ea@star}{\ea@unstar}}
\def\ea@star#1\ea{\@ifstar{\ea@star@star{#1}}{\ea@star@unstar{#1}}}
\def\ea@unstar#1\ea{\@ifstar{\ea@unstar@star{#1}}{\ea@unstar@unstar{#1}}}
\newcommand{\ea@unstar@unstar}[1]{\begin{align}#1\end{align}}
\newcommand{\ea@star@star}[1]{\begin{align*}#1\end{align*}}
\newcommand{\ea@unstar@star}[1]{\PackageError{preamble}{Unstarred macro invoked but macro ends with a star}{There are two options: You may either put a star before and after ea (this creates an unnumbered align environment) or use no star at all (this creates a numbered align environment)}
}
\newcommand{\ea@star@unstar}[1]{\PackageError{preamble}{Starred macro invoked but macro ends with no star}{There are two options: You may either put a star before and after ea (this creates an unnumbered align environment) or use no star at all (this creates a numbered align environment)}
}
    \newcommand*{\qrr@gobblenexttocentry}[5]{}
    \newcommand*{\qrr@gobblenexttocentry}[4]{}
\newcommand*{\addsubsection}{%
    \addtocontents{toc}{\protect\qrr@gobblenexttocentry}%
    \subsection}
\setlist[enumerate]{label = \textbf{\textup{\textup{(\alph*)}}},ref = \textup{(\alph*})}
\def\Eig{\operatorname{Eig}}
\def\adj{\mathrm{adj}}
\def\APS{\mathrm{APS}}
\def\stab{\si}
\def\trans{\mathrm{trans}}
\def\PF{\mathop{\rm PF}\nolimits}
\def\DET{\mathop{\rm DET}\nolimits}
\def\SP{\mathop{\rm SP}\nolimits}
\def\ind{\mathop{\rm ind}\nolimits}
\def\skew{\mathrm{skew}}
\def\sym{\mathrm{sym}}
\def\boo{{\mathbin{\mathbf 1}}}
\def\m{{\mathfrak m}}
\def\KOtor{\Ga_\ell/\!\!/\Ga_{\ell+1}}
\def\Bord{{\mathfrak{Bord}}{}}
\def\rO{\mathrm{O}}
\def\Cl{C\ell}
\def\Cyl{\operatorname{Cyl}}
\def\Ell{\operatorname{Ell}}
\def\Lag{\operatorname{Lag}}
\def\det{\operatorname{det}}
\begin{document}

\title[Bordism invariance of orientations and real APS index theory]{Bordism invariance of orientations\newline and real APS index theory}

\author{Markus Upmeier}

\address{Department of Mathematics, University of Aberdeen, Fraser Noble Building, Elphinstone Rd, Aberdeen, AB24 3UE, U.K.}

\email{markus.upmeier@abdn.ac.uk}

\date{\today}

\begin{abstract}
We show that orientations and Floer gradings for elliptic differential operators can be propagated through bordisms. This is based on a new perspective on APS indices for elliptic boundary value problems over the real numbers. Several applications to moduli spaces of this new bordism-theoretic point of view will be given in the sequel.
\end{abstract}

\thanks{This research was partly funded by a Simons Collaboration Grant on `Special Holonomy in Geometry, Analysis and Physics'. The author thanks Dominic Joyce for many discussions.}

\keywords{Index theory, cobordisms moduli spaces, elliptic boundary conditions, gauge theory, determinant line bundles, topological quantum field theories}

\subjclass[2020]{Primary 58Jxx, 58D27; Secondary 18Nxx, 57Rxx}

\maketitle

\setcounter{tocdepth}{2}
\tableofcontents

\section{Introduction}
\label{bio1}

In the study of moduli spaces of solutions to elliptic differential equations, orientations and Floer gradings are an important ingredient for the construction of enumerative invariants \cites{DoSe,DoTh,DoKr,Joyc5,Walp1}. Examples include moduli spaces of instantons or moduli spaces of calibrated submanifolds. Constructing orientations is often a challenging problem due to the lack of a theoretical framework. This paper and the sequels \cites{JUother,JUfurther} develop a new such framework based on bordism theory.

The present paper develops the analytic foundation of this new bordism-theoretic perspective and exhibits a new relationship between `categorical indices' and certain elliptic boundary conditions. Recall that the bordism-invariance of the index in $KO$-theory for real Dirac operators is a consequence of the Atiyah--Singer Index Theorem \cites{AtSi4,AtSi5}. The main result of this paper, Theorem~\ref{bio3thm1}, can be viewed as a higher-categorical rendition of this classical result.

Motivated by the properties of the real Dirac operator in dimension $\ell$ we introduce the notion of an `$\ell$-adapted' operator; in general, $\ell$ may be different from the dimension. Problems involving twisted Dirac operators often give $\ell$-adapted operators. We then construct the \emph{orientation torsor} $\rO_\ell(D_X)$ of an `$\ell$-adapted' elliptic differential operator $D_X$ on a compact manifold without boundary, which encodes orientations of $\Ker(D_X),$ $\Coker(D_X),$ and Floer gradings of the spectrum of $D_X.$ The orientation torsor is a kind of categorification of the classical index. Our main result shows that every 
$(\ell+1)$-adapted elliptic differential operator $D_W$ on a bordism $W$ from $(X_0,D_{X_0})$ to $(X_1,D_{X_1})$ induces a canonical isomorphism $\rO_\ell[D_W]\colon\rO_\ell(D_{X_0})\to\rO_\ell(D_{X_1})$ of orientation torsors. In this sense a \emph{categorical symmetry} for orientation torsors is established, where the symmetry is described not by a group, but by a category (the elliptic bordism category $\Bord_n^{\Ell_\ell}$ defined below). For example, from our point of view the action of the gauge group on the orientations on moduli spaces of connections as in Donaldson--Kronheimer~\cite{DoKr} becomes the isomorphism induced by a cylinder bordism, where we use the gauge transformation to define the boundary identifications. As will be shown in Joyce--Upmeier~\cites{JUother, JUfurther} our method is very powerful for constructing orientations because it allows the passage from a manifold to a simpler, bordant manifold. We will construct coherent orientations for DT4-invariants of Calabi--Yau 4-folds, thereby settling (affirmatively) an important open question for Donaldson--Thomas type invariants \cite{DoTh}. In dimensions 7 and 8, we will also give new applications to orientations on the moduli space of Cayley submanifolds and to Floer gradings on the moduli space of $G_2$-instantons.

The main idea behind the categorical symmetry is a new approach concerning index theory of real elliptic boundary value problems. The classical papers (Atiyah--Patodi--Singer~\cite{AtPaSi1} and Lockhart--McOwen \cite{LoMc}) construct indices of elliptic boundary value problems with prescribed decay rates near the boundary, leading to an index that depends on the choice of decay rate. From our perspective on APS indices for elliptic boundary value problems, the orientation torsor allows us to construct an invariantly defined APS index that is independent of choices. We introduce the concept of a `nearly APS boundary condition' and show that an orientation on the boundary fixes an elliptic boundary condition up to deformation.

Thus, the index of an elliptic differential operator on a manifold \emph{with boundary} should be regarded not as an element of an abelian group (a `number'), but rather as an element of a \emph{torsor} constructed from the induced differential operator \emph{on} the boundary. From this perspective a choice of boundary condition determines a trivialization of the torsor and changes of trivialization agree with the change in index when passing to a different boundary condition. 

\section{The elliptic bordism category}
\label{bio2}

This section introduces a variant of the idea of a bordism category, where tangential structures are replaced by `adapted' elliptic differential operators, defined below. The key point is to specify a standard `cylindrical' form of the adapted differential operator over a collar neighborhood of the boundary of the bordism.

We also define here the orientation bundle $\rO_\ell(\cD_X)$ of a family of $\ell$-adapted elliptic differential operators on a compact manifold without boundary.

These definitions are used in the statements of our main results in \S\ref{bio3}.

\subsection{\texorpdfstring{$\ell$-adapted differential operators}{ℓ-adapted differential operators}}
\label{bio21}

Throughout, use the following notation: `$\equiv$' denotes congruence mod $8$;
$\ol V$ is the conjugate of a complex vector space $V$;
If $j\colon V\to\ol{V}$ is a quaternionic structure on $V,$ let $V^\diamond$ be the same complex vector space $V$ but with the quaternionic structure $j^\diamond=-j.$

% Similar notation for maps:
% Write $\ol{f}\colon\ol{V}\to\ol{W}$ for the map induced by a $\C$-linear map $f\colon V\to W.$ Let $f^\diamond\colon V^\diamond\to W^\diamond$ be the map induced by an $\H$-linear map $f\colon V\to W.$

\begin{table}[htb]
\centerline{\begin{tabular}{|l|c|c|c|c|c|c|c|c|c|c|c|}
\hline
$\bs\ell\equiv$ & $\bs 0$ & $\bs 1$ & $\bs 2$ & $\bs 3$ & $\bs 4$ & $\bs 5$ & $\bs 6$ & $\bs 7$\\\hline
$\K$ & $\R$ & $\R$ & $\C$ & $\H$ & $\H$ & $\H$ & $\C$ & $\R$\\\hline
$\Ga_\ell$ & $\Z$ & $\Z_2$ & $\Z_2$ & 0 & $\Z$ & 0 & 0 & 0\\\hline
\end{tabular}}
\caption{$\K$ is the natural base (skew) field of the real Clifford algebra $\Cl_{\ell-1}$ and $\Ga_\ell$ is the coefficient group $KO_\ell(\mathrm{pt})$ of  K-theory.}
\label{bio2tab1}
\end{table}

\begin{dfn}
\label{bio2def1}
Let $E_0, E_1\to X$ be vector bundles over a Riemannian $n$-manifold with fiberwise metrics. Let $\ell\in\N.$ A first order elliptic differential operator 
\[
 D\colon\Ga^\iy(E_0)\longra\Ga^\iy(E_1)
\]
is called \emph{$\ell$-adapted} if $E_0,$ $E_1$ have metric $\K$-linear structures, where the (skew) field $\K$ is defined according to Table~\ref{bio2tab1} and, in addition, the following conditions hold:
\begin{itemize}
\item
If $\ell\equiv 1,$ then $E_0=E_1$ and $D$ is $\R$-linear formally skew-adjoint, $D^*=-D.$
\item
If $\ell\equiv 2,$ then $E_0=\ol{E_1}$ and $D$ is $\C$-linear formally skew-adjoint, $D^*=-\ol{D}.$
\item
If $\ell\equiv 3,7,$ then $E_0=E_1$ and $D$ is $\K$-linear formally self-adjoint, $D^*=D.$
\item
If $\ell\equiv 5,$ then $E_0=E_1^\diamond$ and $D$ is $\H$-linear formally self-adjoint, $D^*=D^\diamond.$
\item
If $\ell\equiv 6,$ then $E_0=\ol{E_1}$ and $D$ is $\C$-linear formally self-adjoint, $D^*=\ol{D}.$
\item
If $\ell\equiv 0,4,$ then $D$ is $\K$-linear with no further conditions imposed.
\end{itemize}
Note that $\ell$ may be different from the dimension $n.$
\end{dfn}

This terminology is motivated by the properties of the real Dirac operator on an $n$-dimensional spin manifold. Appendix~\ref{bioB} reviews real Dirac operators, and Table~\ref{bioBtab2} shows that the (untwisted) real Dirac operator is $\ell$-adapted for $\ell=n.$ 

We call zeroth order differential operator \emph{potentials}. Let $\cV_\ell\subset\Ga^\iy(E^*_0\ot_\K E_1)$ be the Fréchet subspace of $\ell$-adapted potentials.

An \emph{isomorphism} of $\ell$-adapted differential operators $D\colon\Ga^\iy(E_0)\to\Ga^\iy(E_1)$ and $D'\colon\Ga^\iy(E_0')\to\Ga^\iy(E_1')$ on $X$ is a pair of metric $\K$-linear vector bundle isomorphisms $\Phi_0\colon E_0\to E_0',$ $\Phi_1\colon E_1\to E_1'$ satisfying $D'\circ\Phi_0=\Phi_1\circ D$ and $\Phi_0=\Phi_1$ if $\ell\equiv 1,3,7,$ $\Phi_0=\ol{\Phi_1}$ if $\ell\equiv 2,6,$ and $\Phi_0=\Phi_1^\diamond$ if $\ell\equiv 5.$ The pullback of an $\ell$-adapted differential operator $D$ along an open embedding is again $\ell$-adapted.

%The \emph{pullback} of $D$ along an open embedding $\io\colon \tilde X\to X$ is the $\ell$-adapted differential operator $\io^*(D)\colon\Ga^\iy(\io^*E_0)\to\Ga^\iy(\io^*E_1)$ on $\tilde X$ uniquely determined by the property $\io^*(D)(\io^*(\psi))=\io^*(D(\psi))$ for all $\psi\in\Ga^\iy(E_0).$

\begin{dfn}
\label{bio2def2}
Let $D$ be an $\ell$-adapted elliptic differential operator on a compact manifold $X$ without boundary. Then $D$ is a Fredholm operator, see Lawson--Michelsohn~\cite{LaMi}*{Ch.~III, \S 5}, and the \emph{index} of $D$ in $\Ga_\ell$ is defined as
\e
 \ind_\ell(D)
 =
 \begin{cases}
 \dim_\K\Ker(D) - \dim_\K\Coker(D) & \text{if $\ell\equiv 0,4,$}\\
 \dim_\K\Ker(D)\bmod 2 & \text{if $\ell\equiv 1,2,$}\\
 0 & \text{if $\ell\equiv 3,5,6,7.$}
 \end{cases}
 \label{bio2eq1}
\e
Note here that $D$ is skew-adjoint if $\ell\equiv 1,2,$ so \eqref{bio2eq1} is well-behaved.
\end{dfn}

For the real Dirac operator on a spin $n$-manifold, this definition agrees with the index in real K-theory (the $\hat A$-genus), see \cite{LaMi}*{Ch.~III, \S 16}.

\subsection{Orientation bundles}
\label{bio22}

The orientation bundle of a family $\cD_X$ of $\ell$-adapted elliptic differential operators will be a $\Ga_\ell$-graded principal $\Ga_{\ell+1}$-bundle, which `categorifies' the first Stiefel--Whitney class of the families index.

\begin{dfn}
\label{bio2def3}

Let $G,$ $H$ be discrete abelian groups. A \emph{$G$-graded principal $H$-bundle} over a topological space $T$ is a pair $(g,P)$ of a locally constant map $g\colon T\to G$ and a principal $H$-bundle $P\to T.$ These form the objects of a category $(G/\!\!/H)^T$ in which there are morphisms $\Phi\colon(g_0,P_0)\to (g_1,P_1)$ only if $g_0=g_1$ and are then defined to be all isomorphisms $\Phi\colon P_0\to P_1$ of principal bundles.

The \emph{monoidal structure} is $(g,P)\ot(g',P')=(g+g',(P\t P')/H),$ where $H$ acts anti-diagonally on $P\t P'.$ The \emph{unit object} is $\boo_{(G/\!\!/H)^T}=(0_G,H\t T).$ There are \emph{symmetry isomorphisms} $(g+g',[p,p'])\mapsto(g'+g,[(-1)^{g+g'}p',p]).$ Hence $(G/\!\!/H)^T$ is a symmetric monoidal category in the sense of MacLane~\cite{MacL}*{Ch.~XI.1}. Every morphism is invertible and every object $(g,P)$ has a \emph{dual object} $(-g,P^*),$ where $P^*=\Hom_H(P,H),$ so there is an isomorphism $(g,P)\ot(-g,P^*)\to\boo,$ $(t,\varphi)\mapsto\an{t,\varphi}.$ In other words, $(G/\!\!/H)^T$ is a Picard groupoid as in Sinh~\cite{Sinh}.

If $T$ is a point, we speak of a $G$-graded $H$-torsor and simply write $G/\!\!/H.$
% Every $t\in T$ has a unique \emph{dual} $t^*\in T^*$ such that $\an{t^*,t}=1.$
\end{dfn}

For a finite-dimensional real vector space $V$ let $\det_\R(V)=\La^{\dim V}_\R(V)$ and $O(V)=(\det(V)\setminus\{0\})/\R_{>0}$ be the $\Z_2$-torsor of orientations $\{\pm\om\}$ on $V.$ Applied fiberwise to a real vector bundle $V\to T,$ one obtains the principal $\Z_2$-bundle $O(V)\to T$ of fiberwise orientations.

\begin{dfn}
\label{bio2def4}

Let $X$ be a compact manifold without boundary, $T$ a paracompact Hausdorff space, and $E_0, E_1 \to X\t T$ vector bundles that are smooth in the $X$-direction. Let $\cD_X=\{D_X(t)\}$ be a continuous $T$-family of $\ell$-adapted elliptic differential operators, where $D_X(t)\colon\Ga^\iy\bigl(E_0\vert_{X\t\{t\}}\bigr)\to\Ga^\iy\bigl(E_1\vert_{X\t\{t\}}\bigr)$ for $t\in T.$
\begin{itemize}
\item
If $\ell\equiv 0,$ then $\cD_X$ determines a continuous $T$-family of $\R$-linear Fredholm operators, so by \S\ref{bioA2} a real \emph{determinant line bundle} $\DET_\R\cD_X\to T$ with fibers $\DET_\R\cD_X|_t\cong\det_\R\Ker D_X(t)\ot\det_\R(\Coker D_X(t))^*$ and a principal $\Z_2$-bundle $O(\DET_\R \cD_X)\to T.$ Thus, for $\ell\equiv0$ we study orientations on the `virtual' vector spaces $\Ker D_X(t)-\Coker D_X(t).$
\item
If $\ell\equiv1,$ then $\cD_X$ determines a continuous $T$-family of $\R$-linear skew-adjoint Fredholm operators, so by \S\ref{bioA3} a \emph{Pfaffian line bundle} $\PF\cD_X\to T$ with fibers $\PF \cD_X|_t\cong\det_\R\Ker D_X(t)$ and a principal $\Z_2$-bundle $O(\PF_\R \cD_X)\to T.$ Thus, for $\ell\equiv1$ we study orientations on the vector spaces $\Ker D_X(t).$
\item
If $\ell\equiv3,7,$ then $\cD_X$ determines a continuous $T$-family of $\K$-linear self-adjoint Fredholm operators with $\K=\H,\R$ as in Table~\ref{bio2tab1}, so by \S\ref{bioA4} there is a \emph{spectral principal $\Z$-bundle} $\SP_\K\cD_X\to T$ with fibers $\SP_\K\cD_X|_t$ the set of all enumerations $\cdots\le\la_{-1}\le\la_0\le\la_1\le\cdots$ of the spectrum of $D_X(t),$ where eigenvalues are repeated with multiplicity over $\K.$ The principal action by $\Z$ translates the labels.
(By \cite{LaMi}*{Ch.~III, \S 5} the operator $D_X(t)$ has a countable spectrum consisting entirely of eigenvalues.)
\end{itemize}

The \emph{orientation bundle} $\rO_\ell(\cD_X)$ is the $\Ga_\ell$-graded principal $\Ga_{\ell+1}$-bundle with grading $t\mapsto\ind_\ell D_X(t)$ and the underlying principal $\Ga_{\ell+1}$-bundle $O(\DET_\R\cD_X),$ $O(\PF_\R\cD_X),$ $\SP_\K\cD_X$ for $\ell\equiv 0,1,3,7$ and trivial otherwise (as $\Ga_{\ell+1}=0$).
\end{dfn}

% Table form of the definition of orientation bundles
%\begin{table}[h!]
%\centerline{\begin{tabular}{|c|c|c|}
%\hline
%$\bs \ell\equiv$ & \bf Principal bundle & \bf Type of $\rO_\ell(\cD_X)$\\\hline
%$ 0$ & $O(\DET_\R\cD_X)\to T$ & $\Z$-graded principal $\Z_2$-bundle\\\hline
%$1$ & $O(\PF_\R\cD_X)\to T$ & $\Z_2$-graded principal $\Z_2$-bundle\\\hline
%$2$ & $\{0\}$ & $\Z_2$\\\hline
%$3, 7$ & $\SP_\K\cD_X\to T$ & principal $\Z$-bundle\\\hline
%$4$ & $\{0\}$ & $\Z$\\\hline
%$5,6$ & $\{0\}$ & \{0\}\\\hline
%\end{tabular}}
%\caption{Orientation bundle $\rO_\ell(\cD_X)\to T$ of $\ell$-adapted elliptic operators $D_X(t).$ For $\ell\equiv 2, 4, 5, 6$ all possible orientation bundles are canonically trivial.}
%\label{ast1tab2}
%\end{table}

In each case $-D_X(t)^*$ is again a $T$-family of $\ell$-adapted elliptic differential operators and from Appendix~\ref{bioA} there are isomorphisms $\rO_\ell(-\cD_X^*)\cong\rO_\ell(\cD_X)^*.$

\subsection{Elliptic operator bordisms}
\label{bio23}

The next definition is motivated by the relationship between the real Dirac operator on a spin manifold $X$ and on $X\t\R,$ see Table~\ref{bioBtab2} in Appendix~\ref{bioB3}. This will model the behaviour of differential operators over a collar neighborhood $X\t[0,\varepsilon)\subset X\t\R$ of a manifold with boundary.

\begin{dfn}
\label{bio2def5}

Let $D\colon\Ga^\iy(E_0)\to\Ga^\iy(E_1)$ be an $\ell$-adapted differential operator on $X.$  Let $t$ be the coordinate on $\R.$ The \emph{cylinder} $\Cyl(D)\colon\Ga^\iy(F_0)\to\Ga^\iy(F_1)$ and the vector bundles $F_0, F_1\to X\t\R$ are defined as follows.

\begin{itemize}
\item
If $\ell\equiv0,$ let $F_0=F_1=(E_0\op E_1)\t\R$ and
\e
\label{bio2eq2}
\Cyl(D)=\begin{pmatrix}
  	\frac{\6}{\6 t} & D^*\\
  	-D & -\frac{\6}{\6 t}
  \end{pmatrix}.
\e

\item
If $\ell\equiv1,$ then $E_0=E_1,$ $D^*=-D.$ Let $F_0=(E_0\ot_\R \C)\t\R,$ $F_1=\ol{F_0},$ and
\e
\label{bio2eq3}
 \Cyl(D)=\si\left(\frac{\6}{\6 t}+iD_\C\right),
\e
where $\si\colon F_0\to F_1,$ $\si(e\ot z)=e\ot\ol{z},$ and $D_\C$ is the complexification of $D.$

\item
If $\ell\equiv2,$ then $E_0=\ol{E_1}$ and $D^*=-\ol D.$ Let $F_0=F_1=(\H\ot_\C E_0)\t\R.$ Using the decomposition $\H\ot_\C E_0\cong E_0\op\ol{E_0},$ define
\e
\label{bio2eq4}
 \Cyl(D)=
 \begin{pmatrix}
-i\frac{\6}{\6 t}& -\ol{D}\\
D & i\frac{\6}{\6 t}
\end{pmatrix}.
\e
%Here, $i$ in position $(2,2)$ acts by $-i,$ since it acts on the conjugate vector bundle (recall how the matrix representation of homomorphism is defined).
\item
If $\ell\equiv3,7,$ then $E_0=E_1$ and $D^*=D.$ Let $F_0=F_1=E_0\t\R$ and
\e
\label{bio2eq5}
 \Cyl(D)=\frac{\6}{\6 t}+D.
\e
\item
If $\ell\equiv4,$ then $F_0=(E_0\op E_1^\diamond)\t\R,$ $F_1=F_0^\diamond=(E_0^\diamond\op E_1)\t\R,$ and
\e
\label{bio2eq6}
 \Cyl(D)=
 \begin{pmatrix}
 i\frac{\6}{\6 t}& {D^*}^\diamond\\
 D & -i\frac{\6}{\6 t}
 \end{pmatrix}.
\e
\item
If $\ell\equiv5,$ then $E_0=E_1^\diamond$ and $D^*=D^\diamond.$ Let $F_0=E_0\t\R,$ $F_1=\ol{F_0},$ and
\e
\label{bio2eq7}
 \Cyl(D)=j\Bigl(\frac{\6}{\6 t}+D\Bigr).
\e
As $\6/\6 t$ commutes and $D$ anti-commutes with $j,$ $\Cyl(D)$ is only $\C$-linear.
\item
If $\ell\equiv6,$ then $E_0=\ol{E_1}$ and $D^*=\ol{D}.$ Let $F_0=F_1=E_0\t\R$ and
\e
\label{bio2eq8}
 \Cyl(D)=i\frac{\6}{\6 t}+D.
\e
As $i\6/\6 t$ commutes and $D$ anti-commutes with $i,$ $\Cyl(D)$ is only $\R$-linear.
\end{itemize}

It is easy to check in each case that $\Cyl(D)$ is an $(\ell+1)$-adapted differential operator. By iterating the construction, define $\Cyl^k(D)$ on $X\t\R^k$ for all $k\in\N.$
\end{dfn}

\begin{dfn}
\label{bio2def6}

Let $X_0,$ $X_1$ be compact $n$-manifolds. Recall that a \emph{bordism} from $X_0$ to $X_1$ is a compact $(n+1)$-manifold $Y$ with boundary $\6 Y\cong X_0\amalg X_1$ and a (chosen) collar neighborhood $\io_Y\colon(X_0\t[0,\varepsilon_0))\amalg(X_1\t(\varepsilon_1,1])\to Y.$

Let $D_{X_0},$ $D_{X_1}$ be $\ell$-adapted elliptic differential operators on $X_0,$ $X_1.$ An \emph{elliptic operator bordism} from $(X_0,D_{X_0})$ to $(X_1,D_{X_1})$ is a bordism $Y$ from $X_0$ to $X_1,$ an $(\ell+1)$-adapted elliptic differential operator $D_Y$ on $Y,$ and an isomorphism
\e
\label{bio2eq9}
 \io_Y^*(D_Y)\cong\Cyl(D_{X_0})|_{X_0\t[0,\varepsilon_0)}\amalg\Cyl(D_{X_1})|_{X_1\t(\varepsilon_1,1]}.
\e

Two bordisms $Y_0,$ $Y_1$ are \emph{equivalent} if there is a compact $(n+2)$-manifold $Z$ with corners and $\6 Z\cong (X_0\t[0,1])\amalg(X_1\t[0,1])\amalg Y_0\amalg Y_1$ and a collar
\[
 (X_0\t[0,\varepsilon_0)\t[0,1])
 \amalg
 (X_1\t(\varepsilon_1,1]\t[0,1])
 \amalg
 (Y_0\t[0,\de_0))
 \amalg
 (Y_1\t(\de_1,1])
 \overset{\io_Z}{\longra}
 Z
\]
such that $\io_Z(x_i,s,t)=\io_Z(\io_{Y_j}(x_i,s),t)$ for all $i,j\in\{0,1\},$ $x_i\in X_i,$ $(-1)^i(s-i)\in[0,\varepsilon_i),$ and $(-1)^j(t-j)\in[0,\de_j),$ see Figure~\ref{collar-nghd}. If $(Y_0,D_{Y_0}),$ $(Y_1,D_{Y_1})$ are elliptic operator bordisms, there should also exist an $(\ell+2)$-adapted elliptic differential operator $D_Z$ on $Z$ and an isomorphism $\io_Z^*(D_Z)\cong\Cyl^2(D_{X_0})\amalg\Cyl^2(D_{X_1})\amalg\Cyl(D_{Y_0})\amalg\Cyl(D_{Y_1})$ which identifies $(\io_{Y_0}\t\id)^*\Cyl(D_{Y_0})\cong\Cyl^2(D_{X_0})$ and $(\io_{Y_1}\t\id)^*\Cyl(D_{Y_1})\cong\Cyl^2(D_{X_1})$ under \eqref{bio2eq9}.

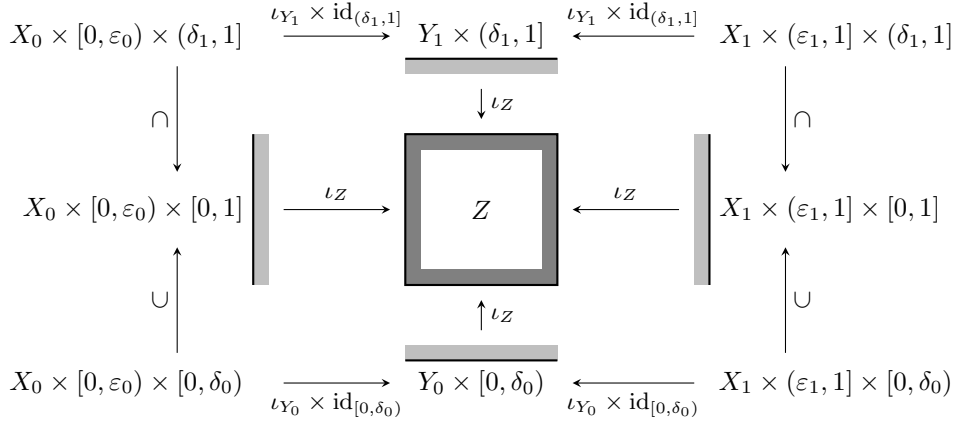
\begin{figure}[htb]
	\centering
	\begin{tikzpicture}[scale=2]

		\draw[fill=gray,opacity=0.5,draw=none] (0,0) rectangle (0.1,1);
		\draw[fill=gray,opacity=0.5,draw=none] (0.9,0) rectangle (1,1);
		\draw[fill=gray,opacity=0.5,draw=none] (0,0) rectangle (1,0.1);
		\draw[fill=gray,opacity=0.5,draw=none] (0,0.9) rectangle (1,1);
		\draw[thick] (0,0) rectangle node{$Z$} (1,1);
		
		\draw[fill=lightgray,draw=none] (-1,0) rectangle (-0.9,1);
		\draw[thick] (-1,0) -- node[left]{$X_0\t[0,\varepsilon_0)\t[0,1]$} (-1,1);
		\draw[-stealth] (-0.8,0.5) -- node[above]{\small $\io_Z$} (-0.1,0.5);

		\draw[fill=lightgray,draw=none] (2,0) rectangle (1.9,1);
		\draw[thick] (2,0) -- node[right]{$X_1\t(\varepsilon_1,1]\t[0,1]$} (2,1);
		\draw[-stealth] (1.8,0.5) -- node[above]{\small $\io_Z$} (1.1,0.5);

		\draw[fill=lightgray,draw=none] (0,-0.5) rectangle (1,-0.4);
		\draw[thick] (0,-0.5) -- node[below]{$Y_0\t[0,\de_0)$} (1,-0.5);
		\draw[-stealth] (0.5,-0.3) -- node[right]{\small $\io_Z$} (0.5,-0.1);

		\draw[fill=lightgray,draw=none] (0,1.4) rectangle (1,1.5);
		\draw[thick] (0,1.5) -- node[above]{$Y_1\t(\de_1,1]$} (1,1.5);
		\draw[-stealth] (0.5,1.3) -- node[right]{\small $\io_Z$} (0.5,1.1);

		\draw[thick] (-1,-0.65) node[left]{$X_0\t[0,\varepsilon_0)\t[0,\de_0)$};
		\draw[-stealth] (-0.8,-0.65) -- node[below]{\small $\io_{Y_0}\t\id_{[0,\de_0)}$} (-0.1,-0.65);
		\draw[-stealth] (-1.5,-0.45) -- node[above,rotate=90]{$\subset$} (-1.5,0.25);
		
		\draw (2,-0.65) node[right] {$X_1\t(\varepsilon_1,1]\t[0,\de_0)$};
		\draw[-stealth] (1.9,-0.65) -- node[below]{\small $\io_{Y_0}\t\id_{[0,\de_0)}$} (1.1,-0.65);
		\draw[-stealth] (2.5,-0.45) -- node[below,rotate=90]{$\subset$} (2.5,0.25);

		\draw[thick] (-1,1.65) node[left]{$X_0\t[0,\varepsilon_0)\t(\de_1,1]$};
		\draw[-stealth] (-0.8,1.65) -- node[above]{\small $\io_{Y_1}\t\id_{(\de_1,1]}$} (-0.1,1.65);
		\draw[-stealth] (-1.5,1.45) -- node[below,rotate=-90]{$\subset$} (-1.5,0.75);

		\draw[thick] (2,1.65) node[right]{$X_1\t(\varepsilon_1,1]\t(\de_1,1]$};
		\draw[-stealth] (1.9,1.65) -- node[above]{\small $\io_{Y_1}\t\id_{(\de_1,1]}$} (1.1,1.65);
		\draw[-stealth] (2.5,1.45) -- node[above,rotate=-90]{$\subset$} (2.5,0.75);

	\end{tikzpicture}
	\caption{Equivalent bordisms $Y_0,$ $Y_1$ and the collar neighborhood of the manifold with corners $Z$}
	\label{collar-nghd}
\end{figure}

The \emph{composition} of $(Y,D_Y)$ with another elliptic operator bordisms $(\tilde Y,D_{\tilde Y})$ from $(X_1,D_{X_1})$ to $(X_2,D_{X_2})$ is given by the glued manifold $Y\cup_{X_1}\tilde{Y},$
using the collars to define the smooth structure over $(-\varepsilon_1,\tilde\varepsilon_1)\t X_1\to Y\cup_{X_1}\tilde{Y},$
and the $(\ell+1)$-adapted elliptic differential operators obtained by gluing $D_Y$ to $D_{\tilde Y},$ which is possible because of the cylindrical forms \eqref{bio2eq2}--\eqref{bio2eq8} over the collars.

The $\ell$-adapted elliptic differential operator $(X,D_X)$ on compact $n$-manifolds $X$ are the objects of a \emph{bordism category} $\Bord_n^{\Ell_\ell}$ in which the morphisms $(X_0,D_{X_0})\to(X_1,D_{X_1})$ are equivalence classes $[Y,D_Y]$ of elliptic operator bordisms. The \emph{identity morphism} at $(X,D_X)$ is $[X\t[0,1],\Cyl(D_X)].$

The disjoint union $(X,D_X)\ot(X',D_{X'})=(X\amalg X',D_X\amalg D_{X'})$ defines a \emph{monoidal structure} on $\Bord_n^{\Ell_\ell}$ with the empty manifold as the \emph{unit object} $\boo_{\Bord_n^{\Ell_\ell}}.$ There are \emph{symmetry isomorphisms} $[(X\amalg X')\t[0,1],\Cyl(D_X\amalg D_{X'})]$ where the collar exchanges the roles of $X$ and $X'.$ With these operations, $\Bord_n^{\Ell_\ell}$ becomes a symmetric monoidal category.
\end{dfn}

Let $\rho\colon X\t\R\to X\t\R,$ $(x,t)\mapsto(x,-t).$ Pulling back $\Cyl(D_X)$ along $\rho$ has the effect of reversing the sign of $\6/\6 t$ in \eqref{bio2eq2}--\eqref{bio2eq8} and passing to the formal adjoint $-D_X^*$ exchanges the roles of $E_0$ and $E_1.$ Observe that the pullback $\rho^*\Cyl(D_X)$ is isomorphic to $\Cyl(-D_X^*)$ as an $(\ell+1)$-adapted differential operator for all $\ell.$
% Strictly speaking, $\rho^*\Cyl(D_X)\cong\ol{\Cyl(-D_X^*)} for $\ell\equiv 1,$ but this makes no difference in what follows.
Hence an elliptic operator bordism $(Y,D_Y)$ can equivalently be viewed as a morphism
\e
\begin{aligned}
 (X_0,D_{X_0})&\longra (X_1,D_{X_1}),\\
 \boo&\longra(X_0,-D_{X_0}^*)\ot(X_1,D_{X_1}),\\
 (X_0,D_{X_0})\ot(X_1,-D_{X_1}^*)&\longra\boo,\\
 (X_1,-D_{X_1}^*)&\longra (X_0,-D_{X_0}^*).
\end{aligned}
\label{bio2eq10}
\e

Applied to $\Cyl(D_X)$ we see that each object $(X,D_X)$ has a \emph{dual object} $(X,-D_X^*)$ in the sense that $\Cyl(D_X)$ defines an isomorphism $(X,D_X)\ot(X,-D_X^*)\to\boo.$ Hence $\Bord_n^{\Ell_\ell}$ is a Picard groupoid.

\section{Main results}
\label{bio3}

\subsection{Statements of main results}
\label{bio31}

We use the notation explained in \S\ref{bio2}. In particular, recall that every $\ell$-adapted first order elliptic differential operators $D_X$ on a compact manifold without boundary has an orientation $\Ga_{\ell+1}$-torsor $\rO_\ell(D_X).$

\begin{thm}
\label{bio3thm1}
Every\/ $(\ell+1)$-adapted elliptic operator bordism
\e
 [Y,D_Y]\colon (X_0,D_{X_0})\longra(X_1,D_{X_1})
\label{bio3eq1}
\e
of\/ $\ell$-adapted elliptic differential operators on compact\/ $n$-manifolds\/ $X_0,$ $X_1$ without boundary induces an isomorphism of graded orientation torsors,
\e
\label{bio3eq2}
 \rO_\ell[D_Y]\colon \rO_\ell(D_{X_0})\longra\rO_\ell(D_{X_1}).
\e
The isomorphism \eqref{bio3eq2} depends only on the equivalence class\/ $[Y,D_Y].$ In particular, on the level of gradings we obtain the bordism invariance of the classical index,
\[
 \ind_\ell(D_{X_0})=\ind_\ell(D_{X_1}).
\]
Moreover, the following additional properties hold.
\begin{enumerate}
\item
\textup(Empty boundary\textup)
If\/ $X_0=X_1=\es,$ the elliptic differential operator\/ $D_Y$ on the compact manifold\/ $Y$ without boundary has an index in\/ $\Ga_{\ell+1}$ as in \eqref{bio2eq1}. Under\/ $\Aut(\boo_{\KOtor})\cong\Ga_{\ell+1}$ we have\/ $\rO_\ell[D_Y]=\ind_{\ell+1}(D_Y).$
\item
\textup(Functoriality and disjoint unions\textup)
The construction \eqref{bio3eq2} is functorial for the composition of elliptic operator bordisms and compatible with disjoint un\-ions, so determines a symmetric monoidal functor
\[
 \rO_\ell\colon\Bord_n^{\Ell_\ell}\longra\KOtor
\]
from the elliptic bordism category to\/ $\Ga_\ell$-graded\/ $\Ga_{\ell+1}$-torsors.
\item
\textup(Continuity\textup)
Let\/ $T$ be a paracompact Hausdorff topological space. Let\/ $\cD_{X_0}=\{D_{X_0}(t)\}$ and\/ $\cD_{X_1}=\{D_{X_1}(t)\}$ be\/ $T$-families of\/ $\ell$-adapted elliptic differential operators on\/ $X_0$ and\/ $X_1,$ so the graded orientation torsors naturally assemble into principal\/ $\Ga_{\ell+1}$-bundles\/ $\rO_\ell(\cD_{X_0})=\coprod_{t\in T}\rO_\ell(D_{X_0}(t))$ and\/ $\rO_\ell(\cD_{X_1})=\coprod_{t\in T}\rO_\ell(D_{X_1}(t))$ over\/ $T.$ If\/ $\cD_Y=\{D_Y(t)\}$ is a\/ $T$-family of\/ $(\ell+1)$-adapted elliptic operator bordisms on\/ $Y,$ then \eqref{bio3eq2} is a continuous isomorphism\/ $\rO_\ell[\cD_Y]=\coprod_{t\in T}\rO_\ell[D_Y(t)]$ from\/ $\rO_\ell(\cD_{X_0})$ to\/ $\rO_\ell(\cD_{X_1}).$
\end{enumerate}
\end{thm}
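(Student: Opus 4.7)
My plan is to construct \eqref{bio3eq2} by setting up a Fredholm problem for $D_Y$, using the ``nearly APS'' framework anticipated in the introduction. First, extend $(Y,D_Y)$ by attaching infinite cylinders $X_0\t(-\iy,0]$ and $X_1\t[1,\iy)$ to obtain a manifold $\hat Y$ with cylindrical ends, on which $D_Y$ continues as the translation-invariant operator $\Cyl(D_{X_i})$ via the isomorphism \eqref{bio2eq9}. Then, for each element of $\rO_\ell(D_{X_i})$, I would choose a compatible ``nearly APS'' boundary condition on weighted Sobolev spaces (a Lagrangian splitting, a spectral projection, or a spectral enumeration, according to $\ell\bmod 8$). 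This produces an $(\ell+1)$-adapted Fredholm problem on $\hat Y$ whose determinant, Pfaffian, or spectral label takes values in $\Ga_{\ell+1}$.

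Next, I define $\rO_\ell[D_Y]\colon\rO_\ell(D_{X_0})\to\rO_\ell(D_{X_1})$ as the unique $\Ga_{\ell+1}$-equivariant bijection sending a trivialization at the $X_0$-end to the trivialization at the $X_1$-end for which the Fredholm datum on $\hat Y$ represents $0\in\Ga_{\ell+1}$. Equivariance follows because changing the $X_i$-trivialization by $g\in\Ga_{\ell+1}$ modifies the Fredholm datum by $\pm g$. Bordism invariance of the gradings $\ind_\ell(D_{X_0})=\ind_\ell(D_{X_1})$ is a by-product of the same construction, reducing case-by-case on $\ell\bmod 8$ to the classical Atiyah--Singer bordism invariance of the $KO$-theoretic index.

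For the additional properties: (a) is immediate, since with empty boundary $\hat Y=Y$ and the Fredholm datum equals $\ind_{\ell+1}(D_Y)\in\Ga_{\ell+1}\cong\Aut(\boo_{\KOtor})$. For bordism invariance under an equivalence $(Z,D_Z)$ as in Definition~\ref{bio2def6}, the $(\ell+2)$-adapted operator $D_Z$ on the manifold with corners produces a continuous $1$-parameter family of Fredholm problems interpolating the two constructions, which must be constant because $\Ga_{\ell+1}$ is discrete. For functoriality in (b), apply a neck-stretching argument: insert a long cylinder $X_1\t[0,R]$ in the glued bordism $Y\cup_{X_1}\tilde Y$ and use the translation-invariant form \eqref{bio2eq2}--\eqref{bio2eq8} of $\Cyl(D_{X_1})$ to show that in the limit $R\to\iy$ the composite Fredholm datum factors as the product of those on $Y$ and $\tilde Y$. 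Compatibility with disjoint unions is immediate from additivity. Property (c) follows from continuous dependence of weighted-Sobolev Fredholm problems on parameters, combined with the families determinant, Pfaffian, and spectral constructions of Appendix~\ref{bioA}, which produce continuous $\Ga_{\ell+1}$-principal bundle isomorphisms from families of invertible Fredholm data.

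The main obstacle will be property (b). The neck-stretching argument must be carried out so that the ``low-eigenvalue'' contributions of $\Cyl(D_{X_1})$ on the long cylinder cancel cleanly against the matching boundary-projection contributions on the two halves, yielding an exact gluing formula for the Fredholm datum. The bookkeeping across the eight residues of $\ell\bmod 8$---in particular the anti-diagonal $\Ga_{\ell+1}$-action in Definition~\ref{bio2def3} and the Koszul signs in the symmetry isomorphisms of the Picard groupoid $\KOtor$---must be matched against the standard sign conventions for determinants of real Fredholm operators, and the bulk of the case-by-case analysis will live here.
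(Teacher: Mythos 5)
Your overall plan is aligned with the paper's in its key ideas: the isomorphism \eqref{bio3eq2} is constructed from a Fredholm problem for $D_Y$ with a ``nearly APS'' boundary condition whose deformation class is parametrized by the orientation torsor, and the $\ell\bmod 8$ case analysis (Lagrangian subspaces / spectral projections / spectral enumerations) is indeed where most of the work lives. However, the paper's actual route differs in two places, and one of your steps has a genuine gap.

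On the framework: the paper never passes to the cylindrical-end manifold $\hat Y$ with weighted Sobolev spaces. Instead it works with the compact $Y$ and the Ballmann--B\"ar theory of elliptic boundary conditions (Theorem~\ref{bio4thm1}), and it reduces everything to the finite-dimensional ``quadratic space'' $W_\de(D_X)=\Eig_{[-\de,\de]}(A_X)$ of projected Cauchy data (Definitions~\ref{bio4def2}--\ref{bio4def4}, Propositions~\ref{bio4prop2}--\ref{bio4prop4}). This is, as the paper notes after Theorem~\ref{bio4thm1}, equivalent to the decay formulation on $\hat Y$, so your choice is a legitimate reformulation; the paper's version has the advantage that the Lagrangian bookkeeping across the eight residues is carried out in genuinely finite-dimensional spaces and the sign conventions are made completely explicit via Appendix~\ref{bioA}. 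Also, the paper first reduces Theorem~\ref{bio3thm1} to the one-boundary-component statement Theorem~\ref{bio3thm2} (a trivialization $\tau_\ell[\cD_Y]\colon\rO_\ell(\cD_X)\to\Ga_{\ell+1}\t T$), using the dualization \eqref{bio2eq10}, rather than defining $\rO_\ell[D_Y]$ directly as you do. That reduction streamlines the case analysis considerably.

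The genuine gap is in your argument that \eqref{bio3eq2} depends only on the equivalence class. You propose that the $(\ell+2)$-adapted operator $D_Z$ on the manifold with corners $Z$ ``produces a continuous $1$-parameter family of Fredholm problems interpolating the two constructions, which must be constant because $\Ga_{\ell+1}$ is discrete.'' But the level sets of the $[0,1]$-coordinate on $Z$ are manifolds with boundary $X$ whose topology can change across critical levels (handle attachments), so the Fredholm operators live on \emph{different} function spaces and you do not get a continuous family in the sense needed to invoke local constancy. The paper circumvents this: it first proves the gluing formula Theorem~\ref{bio3thm2}\ref{bio3thm2a} (your ``neck-stretching'' step), then derives bordism invariance from \ref{bio3thm2a}, \ref{bio3thm2c} and \ref{bio3thm2e} by forming the \emph{closed} manifolds $Y\amalg_X Y'$ and $Y'\amalg_X Y'$, exhibiting the bordism $Z\amalg_{X\t[0,1]}(Y'\t[0,1])$ between them, and reducing to the index-vanishing statement \eqref{bio3eq3} on closed manifolds. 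So bordism invariance is a \emph{consequence} of gluing, not an independent continuity argument; if you retain your structure you must supply a substitute for the interpolation step, and the cleanest such substitute is exactly the paper's reduction.

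Finally, for functoriality you correctly identify that the burden lies in the gluing/excision step. The paper does this without neck-stretching: Theorem~\ref{bio3thm2}\ref{bio3thm2a} is proved directly via the deformation \eqref{bio4eq9} of the nearly APS boundary condition to the transmission condition and a finite-dimensional computation in $W_\de$ (with all signs tracked via \eqref{bioAeq10}--\eqref{bioAeq16}). Neck-stretching would likely also work, but you would then need a quantitative low-eigenvalue analysis on the neck that the paper's transmission-condition deformation avoids.
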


Theorem~\ref{bio3thm1} is a direct consequence of the following more technical result. To explain this note that by \eqref{bio2eq10} we can regard \eqref{bio3eq1} as a morphism $\boo\to(X_0\amalg X_1, -D_{X_0}^*\amalg D_{X_1}).$ If suffices then to define an isomorphism $\Ga_{\ell+1}\to\rO_\ell(-D_{X_0}^*\amalg D_{X_1})$ of torsors since $\rO_\ell(-D_{X_0}^*\amalg D_{X_1})\cong \rO_\ell(-D_{X_0}^*)\ot\rO_\ell(D_{X_1})\cong\rO_\ell(D_{X_0})^*\ot\rO_\ell(\cD_{X_1})=\Hom(\rO_\ell(D_{X_0}),\rO_\ell(D_{X_1}))$; (a) is then a consequence of Theorem~\ref{bio3thm2}(e) and (b) follows from parts (a) and (c) of Theorem~\ref{bio3thm2}.

\begin{thm}
\label{bio3thm2}
Let\/ $Y$ be a compact manifold with boundary\/ $\6 Y\cong X.$ Let\/ $\cD_X=\{D_X(t)\}$ be a\/ $T$-family of\/ $\ell$-adapted elliptic differential operators on\/ $X$. If\/ $\cD_Y=\{D_Y(t)\}$ is a\/ $T$-family of\/ $(\ell+1)$-adapted elliptic differential operators on\/ $Y$ that is isomorphic to the cylinder\/ $\Cyl \cD_X$ over a collar, then
\e
\label{bio3eq3}
 \ind_\ell D_X(t)=0
\e
for all\/ $t\in T.$ Moreover,\/ $\cD_Y$ induces an isomorphism
\e
 \tau_\ell[\cD_Y]\colon\rO_\ell(\cD_X)\overset{\cong}{\longra}\Ga_{\ell+1}\t T.
\label{bio3eq4}
\e
\textup(As explained in \textup{\S\ref{bio4}}, elements of\/ $\rO_\ell(\cD_X)$ determine elliptic boundary conditions for\/ $\cD_Y$ and \eqref{bio3eq4} takes the index of the\/ $(\ell+1)$-adapted elliptic differential operator\/ $D_Y(t)$ for all\/ $t\in T$ with respect to that boundary condition.\textup)

The isomorphism \eqref{bio3eq4} has the following properties.

\begin{enumerate}
\item\textup(Gluing\textup)
\label{bio3thm2a}
Suppose additionally that\/ $Y$ contains a cylinder\/ $X_0\t(-\varepsilon,\varepsilon)$ over which\/ $D_Y(t)$ is isomorphic to\/ $\Cyl D_{X_0}(t)$ for a\/ $T$-family of\/ $\ell$-adapted elliptic differential operator\/ $\cD_{X_0}=\{D_{X_0}(t)\}$ on\/ $X_0.$ Let\/ $Y'$ be the manifold obtained by cutting\/ $Y$ along\/ $X_0,$ so\/ $\6 Y'$ consists of\/ $X=\6 Y$ and two copies\/ $X_\pm$ of\/ $X_0.$  The pullback differential operator\/ $D_{Y'}(t)$ on\/ $Y'$ then has the form\/ $\Cyl D_X(t)\amalg \Cyl D_{X_+}(t)\amalg \Cyl D_{X_-}(t)$ over a collar, where\/ $D_{X_+}(t)=D_{X_0}(t),$ $D_{X_-}(t)=-D_{X_0}(t)^*.$ There is a commutative diagram
\begin{equation}
\begin{tikzcd}[column sep=small]
	\rO_\ell(\cD_X)\ot\rO_\ell(\cD_{X_+})\ot\rO_\ell(\cD_{X_-})\arrow[r,"\cong"]
&
	 \rO_\ell(\cD_X)\ot\rO_\ell(\cD_{X_0})\ot\rO_\ell(\cD_{X_0})^*\arrow[d,"{\tau_\ell[\cD_Y]\ot\an{\,,\,}}"]
\\
\rO_\ell(\cD_{X\amalg X_+\amalg X_-})
\arrow[u,"\cong"]
\arrow[r,"{\tau_\ell[\cD_{Y'}]}"]
&
\Ga_{\ell+1}\t T.
\end{tikzcd}
\label{bio3eq5}
\end{equation}
\item
\textup(Bordism invariance\textup)
The isomorphism \eqref{bio3eq4} only depends on the bordism class of\/ $(Y,\cD_Y)$ relative to\/ $(X,\cD_X).$ That is, let\/ $Y,$ $Y'$ be manifolds with the same boundary\/ $\6 Y\cong X\cong\6 Y'$ and let\/ $Z$ be a manifold with corners such that\/ 
$\6 Z\cong Y\amalg Y'\amalg (X\t[0,1]),$ identifying\/ $\6 Y$ with\/ $X\t\{0\}$ and\/ $\6 Y'$ with\/ $X\t\{1\}$ in the obvious way. Let\/ $\cD_Y=\{D_Y(t)\},$ $\cD_{Y'}=\{D_{Y'}(t)\}$ be\/ $T$-families of\/ $(\ell+1)$-adapted elliptic differential operators on\/ $Y,$ $Y'$ isomorphic over a collar to\/ $\Cyl D_X(t).$ If\/ $D_Z(t)$ is an\/ $(\ell+2)$-adapted elliptic differential operator on\/ $Z$ isomorphic over a collar of\/ $\6 Z$ to\/ $\Cyl D_Y(t)\amalg \Cyl D_{Y'}(t)\amalg \Cyl^2 D_X(t),$ then\/ $\tau_\ell[\cD_{Y}]=\tau_\ell[\cD_{Y'}].$
\item
\label{bio3thm2c}
\textup(Disjoint union\textup)
Let\/ $Y,$ $Y'$ be compact manifolds with\/ $X\cong\6 Y,$ $X'\cong\6 Y'.$ For the disjoint union of\/ $T$-families of adapted elliptic differential operators on\/ $X\amalg X'$ and on\/ $Y\amalg Y'$ there is a commutative diagram
\[
\begin{tikzcd}[column sep=20ex]
	\rO_\ell(\cD_X)\ot\rO_\ell(\cD_{X'})\arrow[d]\arrow[r,"{\tau_\ell[\cD_Y]\ot\tau_\ell[\cD_{Y'}]}"]&	(\Ga_{\ell+1}\t T)\ot (\Ga_{\ell+1}\t T)\arrow[d,"\cong"]\\
	\rO_\ell(\cD_{X\amalg X'})\arrow[r,"{\tau_\ell[\cD_Y\amalg\cD_{Y'}]}"] & \Ga_{\ell+1}\t T.
\end{tikzcd}
\]
\item
\label{bio3thm2d}
\textup(Direct sums\textup)
Let\/ $\cD_Y,$ $\tilde \cD_Y$ be two\/ $T$-families of\/ $(\ell+1)$-adapted elliptic differential operators on a compact manifold\/ $Y.$ Suppose that over a collar\/ $D_Y(t)$ is isomorphic to\/ $\Cyl D_X(t)$ and\/ $\tilde D_Y(t)$ to\/ $\Cyl \tilde D_X(t).$ For the direct sum of differential operators there is a commutative diagram
\[
 \begin{tikzcd}[column sep=15ex]
 	\rO_\ell(\cD_X)\ot\rO_\ell(\tilde\cD_X)\arrow[d,"\cong"]\arrow[r,"{\tau_\ell[\cD_Y]\ot\tau_\ell[\tilde\cD_Y]}"] & (\Ga_{\ell+1}\t T)\ot(\Ga_{\ell+1}\t T)\arrow[d,"\cdot"]\\\rO_\ell(\cD_X\op\tilde\cD_X)\arrow[r,"{\tau_\ell[\cD_Y\op\tilde\cD_Y]}"] & \Ga_{\ell+1}\t T.
 \end{tikzcd}
\]
\item
\label{bio3thm2e}
\textup(Empty boundary\textup)
If\/ $X=\emptyset,$ then\/ $\rO_\ell(\emptyset)=\Ga_{\ell+1}\t T$ canonically and
\[
 \tau_\ell[\cD_Y]\colon(\ga,t)\longmapsto(\ga+\ind_{\ell+1} D_Y(t),t).
\]
\end{enumerate}
\end{thm}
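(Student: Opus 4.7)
The plan is to prove Theorem~\ref{bio3thm2} case by case according to $\ell\bmod 8$, with the analytic input being the theory of (nearly) APS elliptic boundary value problems developed in \S\ref{bio4}. The unifying picture is that a point of $\rO_\ell(\cD_X)$ specifies an elliptic boundary condition for $\cD_Y$: for $\ell\equiv 3,7$ a spectral labelling in $\SP_\K\cD_X$ gives directly a spectral section of $D_X(t)$ in the classical APS sense; for $\ell\equiv 0,1$ an orientation of the determinant or Pfaffian line of $D_X(t)$ determines a preferred, orientation-compatible small perturbation to an invertible operator and hence a ``nearly APS'' boundary condition; and for $\ell\equiv 2,4,5,6$ the target group $\Ga_{\ell+1}$ vanishes and there is no torsor content to construct. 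The map $\tau_\ell[\cD_Y]$ will send a point of $\rO_\ell(\cD_X)$ to the $\Ga_{\ell+1}$-index of the resulting boundary value problem on $Y$.

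First I would dispose of \eqref{bio3eq3}. For $\ell\equiv 3,5,6,7$ the group $\Ga_\ell=0$ makes the claim trivial. For $\ell\equiv 0,1,2,4$ I would invoke the doubling construction: the cylindrical form \eqref{bio2eq2}--\eqref{bio2eq8} of $D_Y$, combined with the identification $\rho^*\Cyl(D_X)\cong\Cyl(-D_X^*)$ noted at the end of \S\ref{bio23}, allows $D_Y(t)$ and $-D_Y(t)^*$ to be glued across $X$ into an $(\ell+1)$-adapted elliptic operator on the closed manifold $\tilde Y=Y\cup_X Y^{\mathrm{op}}$. A Cauchy-data analysis (or equivalently computing $\ind_{\ell+1}\tilde D$ via Mayer--Vietoris and comparing with the $\Ga_\ell$-index of $D_X$ under suspension in the sense of Table~\ref{bio2tab1}) then forces $\ind_\ell D_X(t)=0$. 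The construction of $\tau_\ell[\cD_Y]$ and the verification of the torsor property then reduces to a standard computation: a shift of the boundary data by $\ga\in\Ga_{\ell+1}$ shifts the index by $\ga$. For $\ell\equiv 3,7$ this is the familiar fact that changing the spectral section by one eigenvalue changes the APS index by $\pm 1$; for $\ell\equiv 0,1$ it is the parity dependence of the mod-$2$ Fredholm index on the number of crossed eigenvalues of the perturbation.

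With $\tau_\ell[\cD_Y]$ in place, the additional properties follow from standard manipulations in APS-type index theory. Property (e) and (c) are essentially by construction once the convention $\rO_\ell(\es)=\Ga_{\ell+1}\t T$ is unwound. Property (d) is the additivity of Fredholm indices on block-diagonal operators, together with the compatibility of the nearly-APS/spectral-section construction with direct sums. The gluing property (a) is a version of the APS additivity formula: cutting $Y$ along an interior $X_0$ introduces matched boundary components $X_\pm$, and the evaluation pairing $\an{\,,\,}\colon\rO_\ell(\cD_{X_0})\ot\rO_\ell(\cD_{X_0})^*\to\boo$ exactly encodes the statement that matched boundary data on $X_+$ and $X_-$ reassembles the original problem on $Y$. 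Finally, the bordism invariance (b) is proved by applying the same doubling/vanishing argument one dimension higher: the $(\ell+2)$-adapted operator $D_Z$ on the corner bordism $Z$ produces an identity between $\tau_\ell[\cD_Y]$ and $\tau_\ell[\cD_{Y'}]$, since the difference is realized as a boundary contribution from an $(\ell+2)$-adapted operator extending to a closed manifold, forcing triviality in $\Ga_{\ell+1}$.

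The main obstacle will be the construction of $\tau_\ell[\cD_Y]$ for the delicate residues $\ell\equiv 0,1$. There the ``orientation'' is not directly a spectral decomposition and one must check that: (i) the choice of orientation-compatible perturbation yields a well-defined nearly APS boundary condition up to the correct ambiguity; (ii) the resulting boundary value problem is Fredholm in the appropriate $\R$-linear (for $\ell\equiv 0$) or skew-adjoint $\R$-linear (for $\ell\equiv 1$) sense, so that its $\Ga_{\ell+1}=\Z_2$-valued index is defined; and (iii) reversal of the orientation shifts the index by the nontrivial element of $\Z_2$. This is precisely the content promised by the introduction's ``nearly APS boundary conditions'' and is where the bulk of the analytic work lives; once it is settled, the torsor axioms and compatibilities (a)--(e) are unlocked by routine index bookkeeping.
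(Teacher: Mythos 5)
Your overall plan matches the paper: the proof proceeds case by case in $\ell\bmod 8$, points of $\rO_\ell(\cD_X)$ are converted into nearly APS boundary conditions for $\cD_Y$, and $\tau_\ell[\cD_Y]$ is defined as the resulting $\Ga_{\ell+1}$-valued index. For $\ell\equiv 3,7$ your description (spectral labelling in $\SP_\K\cD_X$ gives an APS boundary condition, well-defined by \eqref{bio4eq7}) is exactly what the paper does. The treatment of (c), (d), (e) as routine, (a) via the transmission condition, and (b) by doubling one dimension up also matches the paper's logic (the paper deduces (b) formally from (a), (c), (e) in \S\ref{bio31}).

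Where you diverge is in the two points that carry the real analytic content. First, for $\ell\equiv 0,1$ the paper does not work with ``orientation-compatible small perturbations to an invertible operator'' in the ambient sense; it introduces the finite-dimensional quadratic space $W_\de(D_X)=\Eig_{[-\de,\de]}(A_X)$ with the bilinear form $\be_\de$ of \eqref{bio4eq11}, shows that Lagrangian subspaces $L_\de\subset W_\de(D_X)$ are the same as nearly APS boundary conditions $B_\APS(-\de)\op L_\de$, and then proves the nontrivial identification $\lim\limits_{\longra}\pi_0\Lag W_\de(D_X)\cong O(\DET_\R D_X)$ (resp.\ $O(\PF_\R D_X)$) via \eqref{bio4eq17}, \eqref{bio4eq21}. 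Your perturbation language is consistent with this---a Lagrangian is the graph of an orthogonal isomorphism $V_{[0,\de]}^+\to V_{[0,\de]}^-$, i.e.\ a local perturbation making the truncated operator invertible---but you leave out the one step that actually requires a proof, namely that two path components of the Lagrangian Grassmannian correspond to the two orientations in a way compatible with the stabilization maps $\stab_{\de,\varepsilon}$. Without pinning this down, items (i)--(iii) in your final paragraph remain assertions rather than checks.

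Second, your proposed proof of \eqref{bio3eq3} via doubling to $\tilde Y=Y\cup_X Y^{\mathrm{op}}$ and a Mayer--Vietoris / suspension comparison is not how the paper proceeds and is not in a workable form as stated: there is no Mayer--Vietoris for Fredholm indices of first-order elliptic operators, and the suggested comparison with $\ind_{\ell+1}\tilde D$ only yields $\ind_{\ell+1}\tilde D=0$ by bordism invariance, which does not directly control $\ind_\ell D_X$. The paper's argument is much more local and direct: by Proposition~\ref{bio4prop4} the $\de$-projected Cauchy data $C_\de(D_Y)$ is already a Lagrangian in $(W_\de(D_X),\be_\de)$, and the inertia index (resp.\ mod-2 dimension) of $W_\de(D_X)$ computes $\ind_\ell D_X$ via \eqref{bio4eq15}--\eqref{bio4eq16}, so the existence of a Lagrangian forces this invariant to vanish. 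Your parenthetical ``Cauchy-data analysis'' is pointing at the right idea, but you should drop the doubling and Mayer--Vietoris scaffolding and run the inertia-index argument for a single $Y$.
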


%Remark:
%For Dirac operators on compact manifolds of dimension $n=4k$ the bordism invariance \eqref{bio3eq3} can be shown using the Atiyah--Singer Index Formula. Conversely, \eqref{bio3eq3} can be used to give a proof of the index formula along the lines of the Hirzebruch Signature Theorem as it reduces the problem to checking the formula on generators of the rational spin bordism ring $\Om^\Spin_*\ot\Q\cong\Om^\SO_*\ot\Q.$

The proof of Theorem~\ref{bio3thm2} will be given in \S\ref{bio43}. Since part (b) is a consequence of (a), (c), and (e), we will prove it here.

\begin{proof}[Proof of Theorem~\textup{\ref{bio3thm2}(b)}.]

From $Y,$ $Y'$ we can construct compact manifolds \emph{without boundary} $Y\amalg_X Y',$ $Y'\amalg_X Y',$ and a bordism $Z\amalg_{X\t[0,1]}(Y'\t[0,1])$ without corners between them. Define $T$-families of $(\ell+1)$-adapted differential operators $\cD_{Y\amalg_X Y'}=\cD_Y\amalg_X \cD_{Y'}$ and $\cD_{Y'\amalg_X Y'}=\cD_{Y'}\amalg_X \cD_{Y'}.$ Applying (a) twice, to $Y\amalg_X Y'$ and to $Y'\amalg_X Y'$ cut along $X_0=X,$ gives a pair of commutative diagrams (left and right)
\[
\begin{tikzcd}[column sep=huge]
	\rO_\ell(\cD_{X_+})\ot\rO_\ell(\cD_{X_-})\arrow[r,"\cong"]\arrow[d,shift left=0.5ex,"{\tau_\ell[\cD_Y]\ot\tau_\ell[\cD_{Y'}]}"]
	\arrow[d,shift right=0.5ex,"{\tau_\ell[\cD_{Y'}]\ot\tau_\ell[\cD_{Y'}]}"'] & \rO_\ell(\cD_X)\ot\rO_\ell(\cD_X)^*
	\arrow[d,shift left=0.5ex,"{\ind_{\ell+1}(\cD_{Y\amalg_X Y'})\an{,}}"]
	\arrow[d,shift right=0.5ex,"{\ind_{\ell+1}(\cD_{Y'\amalg_X Y'})\an{,}}"']
	\\
	(\Ga_{\ell+1}\t T)\ot(\Ga_{\ell+1}\t T)\arrow[r,"\cong"] & \Ga_{\ell+1}\t T,
\end{tikzcd}
\]
where we have used (c) and (e) to rewrite the vertical maps; the horizontal maps are the same in both cases. Hence $\tau_\ell[\cD_Y]=\tau_\ell[\cD_{Y'}]$ follows from $\ind_{\ell+1}(\cD_{Y\amalg_X Y'})=\ind_{\ell+1}(\cD_{Y'\amalg_X Y'})$ which holds by \eqref{bio3eq3} applied to $\cD_{Y\amalg_X Y'}\amalg-\cD_{Y'\amalg_X Y'}^*$ and the $T$-family $\cD_{Z\amalg_{X\t[0,1]}(Y'\t[0,1])}=\cD_Z\amalg_{X\t[0,1]}\Cyl \cD_{Y'}$ on $Z.$
\end{proof}

While $\rO_\ell(D_X)$ and $\rO_\ell[D_Y]$ in Theorem~\ref{bio3thm1} appear to depend on the differential operators, they actually only depends on their principal symbols. This is a simple consequence of part (c), as we now explain.

\begin{dfn}
\label{bio3def1}
Let $\si_X\colon E_0\ot T^*X\to E_1$ be the principal symbol of an $\ell$-adapted elliptic differential operator on $X$; we call $\si_X$ an \emph{$\ell$-adapted elliptic symbol}. Let $T_{\si_X}^\ell$ be the set of $\ell$-adapted elliptic differential operators $D_X$ with $\si(D_X)=\si_X.$ Observe that there is a tautological $T_{\si_X}^\ell$-family $\cD_X^{\si_X}$ of $\ell$-adapted operators. Moreover, $T_{\si_X}^\ell$ is an affine space over the space of $\ell$-adapted potentials $\cV_\ell$ as in Definition~\ref{bio2def1} so, in particular, $T_{\si_X}^\ell$ has a natural Fr\'echet space topology. Since $T_{\si_X}^\ell$ is a contractible topological space, the $\Ga_\ell$-graded principal $\Ga_{\ell+1}$-bundle $\rO_\ell(\cD_X^{\si_X})\to T_{\si_X}^\ell$ is trivial and hence the grading is a constant function and the global sections $\rO_\ell(\si_X)=\Ga^\iy(\rO_\ell(\cD_X^{\si_X}))\in\Ga_\ell/\!\!/\Ga_{\ell+1}$ form a nonempty $\Ga_{\ell+1}$-torsor, called the \emph{orientation torsor} of the $\ell$-adapted elliptic symbol $\si_X.$ For each $\ell$-adapted elliptic differential operator $D_X$ with $\si(D_X)=\si_X$ the evaluation of the section at $D_X$ defines an isomorphism
\e
 \rO_\ell(\si_X)\overset{\smash\cong}{\longra}\rO_\ell(D_X).
 \label{bio3eq6}
\e

Similarly, an $(\ell+1)$-adapted elliptic symbol $\si_Y$ on a bordism $Y$ determines a $T_{\si_Y}^{\ell+1}$-family $\cD_Y^{\si_Y}$ of $(\ell+1)$-adapted operators on $Y.$ Suppose that
\[
 \io_Y^*(\si_Y)\cong\Cyl(\si_{X_0})|_{X_0\t[0,\varepsilon_0)}\amalg\Cyl(\si_{X_1})|_{X_1\t(\varepsilon_1,1]},
\]
where $\Cyl(\si)$ is defined as in \eqref{bio2eq2}--\eqref{bio2eq8} by replacing $D$ by $\si.$ The `restriction' $r\colon T_{\si_Y}^{\ell+1}\to T_{\si_{X_0}}^\ell\t T_{\si_{X_1}}^\ell$ that maps $D_Y$ to $(D_{X_0},D_{X_1})$ if \eqref{bio2eq9} holds is a surjective map with contractible fibers, so by Theorem~\ref{bio3thm1}(c) the induced isomorphism $\rO_\ell[D_Y]$ of principal bundles is independent of the choice of $D_Y\in r^{-1}(D_{X_0},D_{X_1}),$ so there is a unique isomorphism $\rO_\ell[\si_Y]$ for which
\[
 \begin{tikzcd}
 	\rO_\ell(\si_{X_0})\dar{\eqref{bio3eq6}}\rar{\rO_\ell[\si_Y]} & \rO_\ell(\si_{X_1})\dar{\eqref{bio3eq6}}\\
 	\rO_\ell(D_{X_0})\rar{\rO_\ell[D_Y]} & \rO_\ell(D_{X_1})
 \end{tikzcd}
\]
commutes for all $D_{X_0}\in T_{\si_{X_0}}^\ell,$ $D_{X_1}\in T_{\si_{X_1}}^\ell,$ and $D_Y\in T_{\si_Y}^{\ell+1}$ with $r(D_Y)=(D_{X_0},D_{X_1}).$ Moreover, $\rO_\ell[\si_Y]$ depends only on the bordism class of $(Y,\si_Y).$
\end{dfn}

\subsection{Applications to gauge theory}
\label{bio32}

The following constructions are important for the applications of Theorem~\ref{bio3thm1} to orientation problems for moduli spaces in gauge theory. They also describe the relationship between the `ordinary' bordism category with tangential structures and our elliptic bordism category.

\begin{dfn}
\label{bio3def2}
Let $D_X\colon\Ga^\iy(E_0)\to\Ga^\iy(E_1)$ be a first order $\K$-linear differential operator on $X,$ where $\K=\R,$ $\C,$ or $\H.$ Let $G$ be a Lie group with a fixed representation $\rho\colon G\to\End_\K(V).$ Let $P\to X$ be a principal $G$-bundle with a connection $\na_P.$ The associated vector bundle $V_P=(P\t V)/G\to X$ has an induced connection $\na_{V_P}.$ The \emph{twisted differential operator} $D_X^{\na_{V_P}}$ is defined as
\ea*
\Ga^\iy\bigl(E_0\ot_\K V_P\bigr)&\longra\Ga^\iy\bigl(E_1\ot_\K V_P\bigr),\\
f\ot v&\longmapsto D_X(f)\ot v+\si(f)\ot\na_{V_P}(v).
\ea*

If $D_X$ is an $\ell$-adapted elliptic differential operator, then $D_X^{\na_{V_P}}$ is again an $\ell$-adapted elliptic differential operator. By letting $\na_P$ vary we obtain from $D_X$ an $\cA_P$-family $\cD_X^P$ of $\ell$-adapted elliptic differential operators parameterized by the Fr\'echet space of all connections $\cA_P$ on $P.$
\end{dfn}

\begin{rem}
In gauge theory, $\K=\R$ and $\rho=\Ad$ is the adjoint representation of $G$ on its Lie algebra. A similar construction with $\K=\H$ will be important in \cite{JUfurther} for the applications of Theorem~\ref{bio3thm1} to calibrated geometry.
\end{rem}

Let $B\O=\lim\limits_{\longrightarrow} B\O(n)$ be the classifying space for stable real vector bundles. As in Joyce--Upmeier~\cite{JUother}*{Def.~2.18}, a \emph{tangential structure} $\bs B=(B,\be)$ is a topological space $B$ with a continuous map $\be\colon B\to B\O.$ If $X$ is a smooth manifold, possibly with corners, the stable tangent bundle is classified by a map $\phi_X\colon X\to B\O.$ A \emph{$\bs B$-structure} on $X$ is represented by a continuous map $\ga_X\colon X\to B$ and a homotopy $\eta_X\colon\ga_X\circ\be\simeq \phi_X.$ In this paper, only the example $B=B\Spin=\lim\limits_{\longrightarrow} B\Spin(n)$ will be important. We now define bordism categories with tangential $\bs B$-structures.

\begin{dfn}[Joyce--Upmeier~\cite{JUother}]
\label{bio3def3}
The $\bs B$-bordism category of principal $G$-bundles $\Bord_n^{\bs B}(BG)$ has as objects all triples $(X,\ga_X,P)$ of a compact $n$-manifold $X$ with $\bs B$-structure $\ga_X$ and a principal $G$-bundle $P\to X.$ Morphisms $[Y,\ga_Y,Q]$ from $(X_0,\ga_{X_0},P_0)$ to $(X_1,\ga_{X_1},P_1)$ are equivalence classes of principal $G$-bundles $Q\to Y$ over a bordism $Y$ from $X_0$ to $X_1$ with $\bs B$-structure $\ga_Y$ and an isomorphism $\io_Y^*(Q)\cong (P_0\t[0,\varepsilon_0))\amalg (P_1\t(\varepsilon_1,1]),$ where the collar $\io_Y\colon(X_0\t[0,\varepsilon_0))\amalg(X_1\t(\varepsilon_1,1])\to Y$ preserves $\bs B$-structures. The equivalence relation is as in Definition~\ref{bio2def6}.

Disjoint union defines a monoidal structure on $\Bord_n^{\bs B}(BG)$ with the empty set as the unit object and the obvious symmetry isomorphisms, so $\Bord_n^{\bs B}(BG)$ is a symmetric monoidal category. Moreover, every object in $\Bord_n^{\bs B}(BG)$ has a \emph{dual object} $(X,-\ga_X,P).$ Every morphism is invertible due to the strong equivalence relation placed on morphisms, so $\Bord_n^{\bs B}(BG)$ is a Picard groupoid.
\end{dfn}

We could define a variant of $\Bord_n^{\bs B}(BG)$ by replacing the equivalence relation by that of diffeomorphism, leading to a category with non-invertible morphisms. This would be useful for dealing with cases where there are natural differential operators in dimensions $n$ and $n+1,$ but not in dimension $n+2.$

The following construction is similar to that of Definition~\ref{bio3def1}.

\begin{dfn}
\label{bio3def4}
Let $\bs B=\bs{Spin}.$ The real Dirac operator $D_X$ on a compact spin $n$-manifold $X$ as in Definition~\ref{bioBdef3} is an $n$-adapted elliptic differential operator on $X$ (so now $\ell=n$). Let $P\to X$ be a principal $G$-bundle. Definition~\ref{bio3def2} defines an $\cA_P$-family $\cD_X^P$ of $n$-adapted twisted real Dirac operators, so we have a $\Ga_n$-graded principal $\Ga_{n+1}$-bundle $\rO_n(\cD_X^P)\to\cA_P.$ Since $\cA_P$ is contractible, $\rO_n^{\bs B,G}(X,P)=\Ga^\iy(\rO_n(\cD_X^P))$ is a nonempty $\Ga_n$-graded $\Ga_{n+1}$-torsor, the \emph{orientation torsor} of $(X,P).$ Evaluation at $\na_P\in\cA_P$ defines an isomorphism
\e
\label{bio3eq7}
 \rO_n^{\bs B,G}(X,P)\overset{\smash\cong}{\longra}\rO_n(D_X^{\na_P}).
\e 
\end{dfn}

Also, every compact spin $(n+1)$-bordism $(Y,Q)$ defines an $\cA_Q$-family of $(n+1)$-adapted operators.

 We have the following corollary of Theorem~\ref{bio3thm1}.

\begin{cor}
\label{bio3cor1}
Let\/ $\bs B=\bs{Spin}$ and let\/ $G$ be a Lie group. There is a symmetric monoidal functor
\e
\label{bio3eq8}
 \rO_n^{\bs B,G}\colon\Bord_n^{\bs B}(BG)\longra\Ga_n/\!\!/\Ga_{n+1}
\e
that maps an object\/ $(X,P)$ to its orientation torsor as in Definition~\textup{\ref{bio3def4}}. For every morphism\/ $[Y,Q]\colon(X_0,P_0)\to(X_1,P_1)$ in\/ $\Bord_n^{\bs B}(BG)$ the diagram
\[
\begin{tikzcd}[column sep=huge]
	\rO_n^{\bs B,G}(X_0,P_0)\dar{\eqref{bio3eq7}}\rar{\rO_n^{\bs B,G}[Y,Q]} & \rO_n^{\bs B,G}(X_1,P_1)\dar{\eqref{bio3eq7}}\\
	\rO_n\bigl(D_{X_0}^{\na_{P_0}}\bigr)\rar{\rO_n\bigl[D_Y^{\na_Q}\bigr]} & \rO_n\bigl(D_{X_1}^{\na_{P_1}}\bigr)
\end{tikzcd}
\]
commutes for all\/ $\na_Q\in\cA_Q,$ $\na_{P_0}\in\cA_{P_0},$ $\na_{P_1}\in\cA_{P_1}$ with\/ $\na_Q|_{\6 Y}\cong\na_{P_0}\amalg \na_{P_1}.$
\end{cor}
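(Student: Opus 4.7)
The plan is to construct the functor \eqref{bio3eq8} by transporting the functor $\rO_n$ of Theorem~\ref{bio3thm1}\textup{(b)} through twisted real Dirac operators, so that essentially all of the work will have been done already and we only need to verify that the twisting construction of Definition~\ref{bio3def2} respects the cylindrical-over-the-collar condition of Definitions~\ref{bio2def5} and~\ref{bio2def6}. On objects $\rO_n^{\bs B,G}(X,P)$ is defined by Definition~\ref{bio3def4}. On a morphism $[Y,\ga_Y,Q]$, I would first pick a connection $\na_Q\in\cA_Q$ restricting on $\6 Y$ to the chosen $\na_{P_0}\amalg\na_{P_1}$; such an extension exists by a partition of unity argument, and the space of such extensions is a nonempty affine, hence contractible, subspace of $\cA_Q.$ I would then form the twisted real Dirac operator $D_Y^{\na_Q}$ on $Y$ and use \eqref{bio3eq7} together with Theorem~\ref{bio3thm1} applied to $(Y,D_Y^{\na_Q})$ to define $\rO_n^{\bs B,G}[Y,\ga_Y,Q].$

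For this to make sense two technical inputs must be verified. First, $D_Y^{\na_Q}$ is $(n+1)$-adapted on $Y$: this is Appendix~\ref{bioB} applied to $Y,$ noting that twisting by a $G$-connection preserves all of the $\K$-linear and (skew-)adjointness structures case by case, since the twisting enters only through the zeroth-order term of the operator. Second, over the collar $\io_Y$ one needs an isomorphism $\io_Y^*(D_Y^{\na_Q})\cong \Cyl(D_{X_0}^{\na_{P_0}})\amalg\Cyl(D_{X_1}^{\na_{P_1}}).$ I expect this to be the main obstacle: it demands that the product spin structure on $X\t\R$ realize exactly the spinor bundle decomposition $F_0,F_1$ of Definition~\ref{bio2def5} and that the geometric Dirac operator on the cylinder literally agree with the formulas \eqref{bio2eq2}--\eqref{bio2eq8} in all eight residue classes of $n\bmod 8,$ carefully tracking Clifford multiplication by the outward normal, the $\K$-linear structure, and the formal adjoint. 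This is precisely what Appendix~\ref{bioB3} is set up to do; once the untwisted identification is in hand, twisting by $(V_Q,\na_{V_Q})$ is formal because $\na_Q$ is product-like in a collar (one may assume this after deforming inside the contractible space of extensions).

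Independence from the choice of $\na_Q$ will then follow from Theorem~\ref{bio3thm1}\textup{(c)}: varying $\na_Q$ over the contractible affine space of boundary-fixed connections produces a continuous family of isomorphisms of discrete $\Ga_{n+1}$-bundles, which must therefore be constant. Independence from the representative of the morphism class $[Y,\ga_Y,Q]$ follows by lifting an equivalence $Z$ as in Definition~\ref{bio3def3} to a principal $G$-bundle $R\to Z$ with a connection $\na_R$ restricting to the chosen data on $Y$ and $Y',$ forming the $(n+2)$-adapted twisted Dirac operator $D_Z^{\na_R},$ and invoking the bordism invariance of $\rO_n[D_Y^{\na_Q}]$ built into Theorem~\ref{bio3thm1}.

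Functoriality, monoidality, and the commutative square in the statement are then inherited from Theorem~\ref{bio3thm1}\textup{(b)}. For composition I would glue the chosen connections across the common boundary so that the twisted Dirac operator of $Y\cup_{X_1}\tilde Y$ is exactly the glued elliptic operator bordism of $(Y,D_Y^{\na_Q})$ and $(\tilde Y,D_{\tilde Y}^{\na_{\tilde Q}});$ disjoint union of $G$-bundles similarly translates into disjoint union of twisted Dirac operators, giving the monoidal structure, and the symmetry isomorphisms pass to those of $\Ga_n/\!\!/\Ga_{n+1}.$ The commutative square in the statement is a tautology once one unpacks the definition of $\rO_n^{\bs B,G}[Y,Q]$ as the transport of $\rO_n[D_Y^{\na_Q}]$ through \eqref{bio3eq7}.
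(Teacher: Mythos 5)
Your proposal is correct and takes essentially the same route the paper indicates: the paper presents Corollary~\ref{bio3cor1} as a direct consequence of Theorem~\ref{bio3thm1}, with the construction spelled out in Definition~\ref{bio3def4} mirroring Definition~\ref{bio3def1} — that is, form the $\cA_Q$-family of twisted operators, use the surjective restriction $\cA_Q\to\cA_{P_0}\t\cA_{P_1}$ onto boundary data (with contractible, affine fibers, where one also imposes the product-over-the-collar condition so that \eqref{bio2eq9} holds against the identifications of Appendix~\ref{bioB3}), and invoke Theorem~\ref{bio3thm1}(c) for independence of the chosen interior connection and Theorem~\ref{bio3thm1}(b) for functoriality and monoidality. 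The only thing worth tightening in your writeup is the order of assumptions: the contractible subspace you average over should from the start be the affine space of connections on $Q$ that restrict to $\na_{P_0}\amalg\na_{P_1}$ \emph{and} are product-like over the collar, since the latter is needed before the twisted operator qualifies as an elliptic operator bordism in the sense of Definition~\ref{bio2def6} at all; you get to this, but only parenthetically at the end.
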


Due to the pattern of groups in Table~\ref{bio2tab1}, there is only interesting new content to Corollary~\ref{bio3eq8} for $n\equiv 0,1,3,7.$ For $n\equiv 2,4,$ the theorem boils down to the bordism invariance of the $\hat A$-genus, and it is an empty statement for $n\equiv5,6.$
\medskip

The functor \eqref{bio3eq8} has an important property under change of group. Observe that a Lie group morphism $\varphi\colon G\to H$ induces a functor
\[
 F_\varphi\colon\Bord_n^{\bs B}(BG)\longra\Bord_n^{\bs B}(BH),
 \quad (X,P)\longmapsto (P\t H)/G.
\]
As in Joyce--Tanaka--Upmeier~\cite{JTU}, we call $\varphi$ of \emph{complex type} if the induced map of Lie algebras $\varphi_*\colon\g\to\h$ is injective and the quotient representation $\m=\h/\varphi_*(\g)$ has an equivariant complex structure. Observe that for all $n$ there are obvious group morphisms $\Ga_n\to\Z_2$ (non-trivial if $n\equiv0,1,2,4$) which induce functors $\Ga_n/\!\!/\Ga_{n+1}\to\Z_2/\!\!/\Z_2.$ Using the results from \cite{JTU}*{\S 2.2} it is easy to show the following.

\begin{prop}
\label{bio3prop1}
\hangindent\leftmargini
\textup{\bf(a)}\hskip\labelsep
If\/ $\varphi\colon G\to H$ is a Lie group morphism of complex type, then the following diagram commutes up to natural isomorphism.
\[
\begin{tikzcd}
 \Bord_n^{\bs B}(BG)\arrow[d,"\rO_n^{\bs B,G}"]\arrow[rr,"F_\varphi"]
 \arrow[rrd,shorten <=15pt,shorten >=15pt,Rightarrow,"\mu_{n,G}^{\bs B,H}"]
  & & \Bord_n^{\bs B}(BH)\arrow[d,"\rO_n^{\bs B,H}"]\\
 \Ga_n/\!\!/\Ga_{n+1}\arrow[r] & \Z_2/\!\!/\Z_2 & \Ga_n/\!\!/\Ga_{n+1}\arrow[l]
\end{tikzcd}
\]
\begin{enumerate}
\stepcounter{enumi}
\item
Let\/ $G_1,$ $G_2$ be Lie groups and\/ $\pi_1,$ $\pi_2$ be the projections onto the factors of\/ $G_1\t G_2.$ The following diagram commutes up to natural isomorphism.
\[
\begin{tikzcd}[column sep=huge]
 \Bord_n^{\bs B}(B(G_1\t G_2))
 \arrow[rd,Rightarrow,shorten <=15pt,shorten >=15pt,"\rho_{n,G_1,G_2}^{\bs B}"]
 \arrow[d,"{(F_{\pi_1},F_{\pi_2})}"]\arrow[r,"{\rO_n^{\bs B,G_1\t G_2}}"] & \Ga_n/\!\!/\Ga_{n+1}\\
 \Bord_n^{\bs B}(BG_1)\t\Bord_n^{\bs B}(BG_1)\arrow[r,"{\rO_n^{\bs B,G_1}\t\rO_n^{\bs B,G_2}}"] & \Ga_n/\!\!/\Ga_{n+1}\t\Ga_n/\!\!/\Ga_{n+1}\arrow[u,"\ot"]
\end{tikzcd}
\]
\end{enumerate}
\end{prop}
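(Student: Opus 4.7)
The plan is to deduce both parts by combining an algebraic splitting of adjoint bundles with the direct-sum property of orientation torsors for closed manifolds; in part~(a) one further exploits the canonical\/ $\Z_2$-orientation of a real Dirac operator twisted by a complex vector bundle.

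For part~(b), a principal\/ $(G_1\t G_2)$-bundle\/ $P\to X$ is canonically isomorphic to\/ $F_{\pi_1}(P)\t_X F_{\pi_2}(P),$ and under the adjoint representation of\/ $G_1\t G_2$ the associated vector bundle decomposes as\/ $(\g_1\op\g_2)_P\cong(\g_1)_{F_{\pi_1}(P)}\op(\g_2)_{F_{\pi_2}(P)}.$ Any connection\/ $\na_P$ induces compatible connections on each factor, so the twisted Dirac operator splits as\/ $D_X^{\na_P}\cong D_X^{\na_{F_{\pi_1}(P)}}\op D_X^{\na_{F_{\pi_2}(P)}}.$ The direct-sum property of determinant, Pfaffian, and spectral bundles from Appendix~\ref{bioA} then produces a canonical isomorphism\/ $\rO_n(D_X^{\na_P})\cong\rO_n(D_X^{\na_{F_{\pi_1}(P)}})\ot\rO_n(D_X^{\na_{F_{\pi_2}(P)}}),$ which descends to global sections after observing the product decomposition\/ $\cA_P\cong\cA_{F_{\pi_1}(P)}\t\cA_{F_{\pi_2}(P)}$ of (contractible) spaces of connections. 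This is the natural isomorphism\/ $\rho_{n,G_1,G_2}^{\bs B}.$

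For part~(a), the complex-type hypothesis supplies a\/ $G$-equivariant decomposition\/ $\h\cong\vp_*(\g)\op\m$ in which\/ $\m$ carries a\/ $G$-invariant complex structure. Hence for any principal\/ $G$-bundle\/ $P\to X$ one has\/ $\h_{F_\vp(P)}\cong\g_P\op\m_P$ with\/ $\m_P$ a complex vector bundle, and compatible connections give
\[
 D_X^{\na_{\h_{F_\vp(P)}}}\cong D_X^{\na_{\g_P}}\op D_X^{\na_{\m_P}}.
\]
The second summand is a real Dirac operator twisted by a complex bundle, and therefore\/ $\C$-linear; as in~\cite{JTU}*{\S 2.2}, this endows its orientation torsor with a canonical trivialization in\/ $\Z_2/\!\!/\Z_2$ (the \emph{complex orientation}). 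This trivialization does not lift to\/ $\Ga_n/\!\!/\Ga_{n+1}$ in general, which explains why the diagram commutes only after projection to\/ $\Z_2/\!\!/\Z_2.$ Combining with the direct-sum isomorphism from part~(b) then supplies the natural isomorphism\/ $\mu_{n,G}^{\bs B,H}.$

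The main obstacle will be naturality with respect to bordisms. For a morphism\/ $[Y,Q]\colon(X_0,P_0)\to(X_1,P_1)$ in\/ $\Bord_n^{\bs B}(BG),$ one must verify that the adjoint-bundle splitting is compatible with the cylindrical form\/ $\io_Y^*D_Y^{\na_Q}\cong\Cyl(D_{X_0}^{\na_{P_0}})\amalg\Cyl(D_{X_1}^{\na_{P_1}})$ over the collar, and that the canonical complex orientation on\/ $D_Y^{\na_{\m_Q}}$ matches its boundary counterparts on each\/ $X_i.$ Both properties reduce to the observation that the cylinder construction of Definition~\ref{bio2def5} is built from\/ $\partial/\partial t$ and Clifford multiplication in a manner that commutes with twisting by an equivariant complex structure, so the decompositions transport along bordisms; Theorem~\ref{bio3thm1}(b) then yields functoriality of the resulting natural isomorphisms along composition of bordisms.
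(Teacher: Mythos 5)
The paper itself offers no proof of Proposition~\ref{bio3prop1}; it simply refers the reader to \cite{JTU}*{\S 2.2} and says the result "is easy to show." Your outline is essentially the argument that reference supports: split the adjoint bundle, use the direct-sum compatibility of orientation torsors (Theorem~\ref{bio3thm2}\ref{bio3thm2d} and \eqref{bioAeq12}, \eqref{bioAeq14}, \eqref{bioAeq15}), and in part~(a) exploit the canonical orientation coming from the complex structure on the $\m$-summand. So the route is the right one.

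Two points deserve more care before the argument can be considered complete. First, Definition~\ref{bio3def3} and the preceding paragraph define "complex type" only by requiring $\varphi_*\colon\g\to\h$ to be injective and the \emph{quotient} $\m=\h/\varphi_*(\g)$ to carry a $G$-equivariant complex structure; you immediately upgrade this to a $G$-equivariant direct-sum decomposition $\h\cong\varphi_*(\g)\op\m$. This is harmless when $G$ is compact (average an invariant inner product), but the paper does not impose compactness, so you should either make that assumption explicit or argue directly with the short exact sequence $0\to\g_P\to\h_{F_\varphi(P)}\to\m_P\to0$ and a deformation of connections over the contractible base $\cA_P$, which suffices because orientation bundles depend only on the homotopy class of the family. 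Second, the step "a Dirac operator twisted by a complex bundle has a canonical trivialization in $\Z_2/\!\!/\Z_2$" is clear for $n\equiv0$ (complex orientation of the determinant line) and $n\equiv1,2$ (complex orientation of the Pfaffian), but for $n\equiv3,7$ the torsor is the spectral $\Z$-torsor $\SP_\K$, and you must pass through the mod-$2$ reduction and Proposition~\ref{bioAprop1} to match it with the complex orientation of $\DET_\R$; since $\K=\H$ when $n\equiv3$, the multiplicity bookkeeping between $\SP_\H$ and $\SP_\R/2\Z$ is not automatic and should be spelled out. Finally, you correctly flag naturality under bordisms as the residual obstacle, but the claim that the cylinder construction "commutes with twisting by an equivariant complex structure" needs to be checked against the case-by-case formulas \eqref{bio2eq2}--\eqref{bio2eq8}, where $\Cyl(D)$ is not simply $\partial/\partial t + D$ in every residue class; the relevant verification is that the complex orientation on $\Cyl(D_X^{\na_{\m_P}})$ restricts to the complex orientations on the boundary pieces, which is what makes $\mu$ a \emph{natural} isomorphism rather than merely a family of isomorphisms.

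In summary, your approach is correct and is the intended one, but the three gaps above (equivariant splitting vs.\ quotient, the $n\equiv3,7$ spectral cases, and the collar-compatibility of the complex orientation) are genuine details, not cosmetic, and a complete proof must address them.
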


\section{Categorical index theory and boundary conditions}
\label{bio4}

Differential operators on manifolds with boundary are not Fredholm operators unless suitable boundary conditions are imposed. Usually there is no canonical choice of boundary condition, even up to deformation. In this section we will show that the orientation torsor $\rO_\ell(\cD_X),$ a kind of categorification of the usual index, parameterizes deformation classes of `nearly APS' boundary conditions. This will then be used to prove Theorem~\ref{bio3thm2}.

\subsection{Background on boundary conditions}
\label{bio41}

The framework of elliptic boundary conditions from Ballmann--Bär~\cite{BaBa1} proves Fredholm results for a large class of boundary conditions. After reviewing these briefly, we prove some additional elementary results, Propositions~\ref{bio4prop1} and \ref{bio4prop2}.

Let $Y$ be a Riemannian manifold with boundary $\6 Y\cong X$. Let $F_0, F_1\to Y$ be vector bundles over $\K$ with metrics on the fibers, and let $D_Y\colon\Ga^\iy(F_0)\to \Ga^\iy(F_1)$ be a $\K$-linear elliptic differential operator on $Y.$ Suppose that over a collar neighborhood $\jmath_X\colon X\t[0,T)\to Y$ of the boundary the operator $D_Y$ has the form
\e
\label{bio4eq1}
 \jmath_X^*(D_Y)=\si\bigl(\6/\6 t+A_X\bigr),
\e
where $\si\colon F_0|_X\overset{\smash\cong}{\longra} F_1|_X$ is orthogonal and $A_X\colon\Ga^\iy(F_0|_X)\to\Ga^\iy(F_0|_X)$ is a self-adjoint elliptic differential operator on $X.$ Note that $\6/\6 t$ points \emph{inward} and $\si_{\d t}(D_Y)=\si$ is the principal symbol. Since $X$ is compact without boundary, standard elliptic theory implies that $A_X$ has a discrete spectrum consisting entirely of eigenvalues Lawson--Michelsohn~\cite{LaMi}*{Ch.~III, \S 5}.
% Riemannian manifolds are always equipped with the natural measure induced by the metric.

An \emph{elliptic boundary condition} for $D_Y$ is a subspace $B\subset\Ga_{L^2}(F_0|_X)$ of a certain special form as defined in \cite{BaBa1}*{Def.~7.5}.

Let $\cR\colon\Ga^\iy(F_0)\to\Ga^\iy(F_0|_X)$ be the restriction to the boundary.

\begin{thm}[{see~\cite{BaBa1}*{Cor.~8.6}}]
\label{bio4thm1}
If\/ $D_Y$ is an elliptic differential operator on a compact Riemannian manifold with boundary, of the form \eqref{bio4eq1} over a collar, and if\/ $B$ is an elliptic boundary condition for\/ $D_Y,$ then
\e
\label{bio4eq2}
 \operatorname{dom}(D_{Y,B})=\left\{\,\psi\in\Ga^\iy(F_0)\;\middle|\; \cR(\psi)\in B \,\right\}
 \xrightarrow{D_{Y,B}}
 \Ga^\iy(F_1)
\e
is an unbounded Fredholm operator. The Fredholm index of \eqref{bio4eq2} can be expressed in terms of the adjoint boundary condition\/ $B^\adj=\si(B)^\perp$ as
\e
\label{bio4eq3}
 \ind D_{Y,B}=\dim_\K\bigl(\Ker D_{Y,B}\bigr)-\dim_\K\bigl(\Ker D_{Y,B^\adj}^*\bigr).
\e
\end{thm}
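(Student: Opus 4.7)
The plan is to follow the functional-analytic approach of Ballmann--Bär: analyze the boundary behavior of $D_Y$ via the spectral decomposition of $A_X$, identify the correct trace space for the maximal closed extension, and then deduce the Fredholm property from a parametrix construction combined with the elliptic hypothesis on $B$.

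First, I would introduce the maximal closed extension $D_{Y,\max}$ on $L^2(F_0)$, whose domain consists of all $\psi\in L^2(F_0)$ with $D_Y\psi\in L^2(F_1)$ in the distributional sense, and the minimal extension $D_{Y,\min}$, obtained as the closure of $D_Y$ on sections vanishing near $\6 Y$. Using the cylindrical form \eqref{bio4eq1} and separation of variables on the collar, any solution of $D_Y\psi=0$ can be expanded in the eigenbasis $\{\varphi_\la\}$ of $A_X$ as $\sum_\la c_\la e^{-\la t}\varphi_\la,$ which is $L^2$-integrable in $t$ precisely when the coefficients satisfy a weighted summability condition asymmetric in the sign of $\la.$ This singles out the hybrid Sobolev boundary space $\check{H}(A_X)=H^{-1/2}_{\ge 0}(A_X)\oplus H^{1/2}_{<0}(A_X)$ (the sign convention depending on the direction of the inward normal).

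Second, I would prove the boundary trace theorem: the restriction $\cR$ extends to a continuous surjection $\operatorname{dom}(D_{Y,\max})\to\check{H}(A_X)$ with kernel $\operatorname{dom}(D_{Y,\min}).$ This reduces to the model case on the half-cylinder where it follows from the explicit exponential solutions and Plancherel, combined with standard interior elliptic regularity away from $\6 Y.$ An elliptic boundary condition $B$ in the sense of \cite{BaBa1} is by definition a closed subspace of $\check{H}(A_X)$ whose pair of projections to the nonnegative and negative spectral subspaces of $A_X$ form a Fredholm pair; in particular $B$ is a compact perturbation of the nonpositive spectral subspace.

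Third, the Fredholm property of $D_{Y,B}$ is established by a parametrix construction: an interior parametrix from the standard elliptic calculus on $Y,$ glued via a cutoff to a boundary parametrix on the collar obtained by inverting $\6/\6 t+A_X$ against the spectral projector onto $B.$ The Fredholm hypothesis on $B$ translates directly into finite-dimensionality of both kernel and cokernel. The index formula \eqref{bio4eq3} then follows from Green's identity: for $\psi\in\operatorname{dom}(D_{Y,B})$ and $\phi\in\operatorname{dom}(D_{Y,B^\adj}^*),$
\[
 \an{D_Y\psi,\phi}_{L^2(F_1)}-\an{\psi,D_Y^*\phi}_{L^2(F_0)}=-\an{\si\,\cR\psi,\cR\phi}_{L^2(F_1|_X)},
\]
which vanishes precisely when $\cR\phi\in\si(B)^\perp=B^\adj,$ identifying $\Coker(D_{Y,B})$ with $\Ker(D_{Y,B^\adj}^*).$

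The main obstacle is making the class of elliptic boundary conditions precise and verifying the semi-Fredholm estimate for the boundary parametrix, which hinges on a careful interplay between the Sobolev scale on $Y$ and the asymmetric hybrid scale $\check H(A_X)$ on $X.$ The spectral analysis of $A_X$ (discreteness of the spectrum and decay of eigenfunctions in Sobolev norms) is what makes the projector onto $B$ well-behaved, and this technical package is the core content of the Ballmann--Bär framework that I would invoke.
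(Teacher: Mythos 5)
The paper does not prove this theorem; it is cited verbatim from Ballmann--Bär~\cite{BaBa1}*{Cor.~8.6} and used as a black box (the author's own contributions start with Propositions~\ref{bio4prop1} and~\ref{bio4prop2}). What you have written is therefore not an alternative to the paper's proof but an outline of the proof given in the cited source, and as such it captures the correct architecture: the maximal and minimal closed extensions, the hybrid trace space $\check H(A_X)=H^{1/2}_{(-\infty,0)}(A_X)\oplus H^{-1/2}_{[0,\infty)}(A_X)$ (your sign convention matches the inward-pointing $\6/\6 t$ used in \eqref{bio4eq1}), the boundary trace theorem identifying $\operatorname{dom}(D_{Y,\min})$ as the kernel of $\cR$, the parametrix glued from an interior elliptic parametrix and a model-cylinder inverse, and the identification of the cokernel with $\Ker D^*_{Y,B^\adj}$ via Green's formula (\cite{BaBa1}*{Lem.~2.6}, which the paper itself invokes later in Proposition~\ref{bio4prop2}).

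Two points in your summary deserve sharpening. First, the claim that an elliptic boundary condition is ``a compact perturbation of the nonpositive spectral subspace'' is too coarse: the Ballmann--Bär definition (\cite{BaBa1}*{Def.~7.5}) requires $B$ to admit an $L^2$-orthogonal decomposition $B=W_+\oplus\Ga(g)$, with $W_+$, $W_-$ finite-dimensional and $g\colon V_-\to V_+$ a bounded map between complements $V_\pm$ inside the spectral halves, together with a regularity requirement $B\subset H^{1/2}$ and the analogous condition on $B^\adj$. Not every compact perturbation of $B_\APS(0)$ satisfies this; the graph structure and the $H^{1/2}$-regularity are what make the boundary parametrix bounded on the right Sobolev scales. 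Second, the passage from ``the range is dense in the orthogonal complement of $\Ker D^*_{Y,B^\adj}$'' (which Green's formula gives) to the index formula \eqref{bio4eq3} also needs the closed-range part of the Fredholm property; in Ballmann--Bär this is packaged as the statement $(D_{Y,B})^*=D^*_{Y,B^\adj}$ as closed unbounded operators, a step worth making explicit. Neither point is a gap in principle, but both are exactly where the technical weight of the cited result lies.
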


For example, for every $\de\in\R$ we have the \emph{APS boundary condition}
\[
 B_\APS(\de)=\Eig_{(-\iy,\de)}(A_X).
\]

APS boundary conditions are equivalent to exponential decay conditions on the extended manifold $\widehat Y=Y\amalg_X[0,\iy)$ with non-critical decay rate as in Lockhart--McOwen \cite{LoMc}.

To turn a formally self-adjoint or skew-adjoint elliptic differential operator on a manifold with boundary into a self-adjoint or skew-adjoint Fredholm operator \eqref{bio4eq2}, it is important to work with self-adjoint boundary conditions $B^\adj=B.$

Decompose $Y$ as $Y^b\amalg Y^c$ where every connected component of $Y^b$ has a nonempty boundary and $Y^c$ is a closed manifold with $\6 Y^c=\emptyset.$ Hence $X=\6 Y=\6 Y^b.$ Decompose also $D_Y=D_{Y^b}\op D_{Y^c},$ where $D_{Y^b}\colon\Ga^\iy(F_0|_{Y^b})\to \Ga^\iy(F_1|_{Y^b})$ and $D_{Y^c}\colon\Ga^\iy(F_0|_{Y^c})\to \Ga^\iy(F_1|_{Y^c}),$ where $\Ga^\iy(F_j)$ for $j=0,1$ is a direct sum of $\Ga^\iy(F_j|_{Y^c})$ and $\Ga^\iy(F_j|_{Y^b}),$ viewed as subsets of $\Ga^\iy(F_j)$ by the extension of sections by zero. By the Unique Continuation Theorem of Aronszajn~\cite{Aro}, the restriction of $\cR$ to $\Ker D_{Y^b}\subset\Ga^\iy(F_0|_{Y^b})$ is injective.

\begin{prop}
\label{bio4prop1}
$\Ker D_{Y,B_\APS(-\de)}=\Ker D_{Y^c}$ for all sufficiently large\/ $\de.$
\end{prop}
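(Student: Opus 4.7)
The plan is to reduce to an operator on $Y^b$ only, and then use Aronszajn's unique continuation together with the Fredholm property at $\de=0$ to reduce to a finite-dimensional linear-algebra statement about spectral projections.

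First I would use the orthogonal decomposition $D_Y=D_{Y^b}\op D_{Y^c}$ described just above the proposition. Since $\6Y^c=\es$, any section of $F_0|_{Y^c}$ extended by zero has vanishing boundary restriction, so the APS boundary condition puts no constraint on the $Y^c$-component. This yields the splitting
\[
 \Ker D_{Y,B_\APS(-\de)}=\Ker D_{Y^c}\op\Ker D_{Y^b,B_\APS(-\de)}
\]
and shows the inclusion $\Ker D_{Y^c}\subseteq\Ker D_{Y,B_\APS(-\de)}$ for every $\de$. It therefore remains to prove that $\Ker D_{Y^b,B_\APS(-\de)}=0$ for $\de\gg 0$.

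Next, apply Theorem~\ref{bio4thm1} at $\de=0$: the boundary condition $B_\APS(0)=\Eig_{(-\iy,0)}(A_X)$ is elliptic, so $D_{Y^b,B_\APS(0)}$ is Fredholm and its kernel $K:=\Ker D_{Y^b,B_\APS(0)}$ is finite-dimensional. Because $B_\APS(-\de)\subseteq B_\APS(0)$ for $\de\ge 0$, one has $\Ker D_{Y^b,B_\APS(-\de)}\subseteq K$. By Aronszajn's unique continuation theorem (invoked in the paragraph preceding the proposition), $\cR|_{\Ker D_{Y^b}}$ is injective; hence $V:=\cR(K)$ is a finite-dimensional subspace of $\Ga_{L^2}(F_0|_X)$ and
\[
 \cR\bigl(\Ker D_{Y^b,B_\APS(-\de)}\bigr)=V\cap\Eig_{(-\iy,-\de)}(A_X).
\]

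The last step is to show that this intersection is trivial once $\de$ is large enough. Let $Q_{\ge-\de}$ denote the $L^2$-orthogonal projection onto $\Eig_{[-\de,\iy)}(A_X)$; then $V\cap\Eig_{(-\iy,-\de)}(A_X)=\Ker(Q_{\ge-\de}|_V)$. As $\de\to\iy$ the projections $Q_{\ge-\de}$ converge strongly to the identity on $L^2$, and on the finite-dimensional $V$ strong convergence upgrades to norm convergence. Hence $Q_{\ge-\de}|_V\to\id_V$ in operator norm, so $Q_{\ge-\de}|_V$ is injective for all sufficiently large $\de$, which gives $V\cap\Eig_{(-\iy,-\de)}(A_X)=0$. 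Combining with the injectivity of $\cR$ on $\Ker D_{Y^b}$ finishes the argument.

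I do not expect a serious obstacle: the only mildly delicate point is the finite-dimensional reduction, for which one must remember that although $\Ker D_{Y^b}$ itself may be infinite-dimensional, imposing any APS condition cuts it down to something finite-dimensional, after which the spectral-projection limit is elementary.
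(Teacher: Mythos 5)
Your proof is correct, and it takes a somewhat different route from the paper's. The paper observes that the family $\de\mapsto\Ker D_{Y,B_\APS(-\de)}$ is a \emph{decreasing} chain of finite-dimensional subspaces (finite-dimensional for every $\de$ by Theorem~\ref{bio4thm1}), that its intersection over all $\de$ is $\Ker D_{Y^c}$ (by unique continuation, since $\cR(\psi)\in\Eig_{(-\iy,-\de)}(A_X)$ for all $\de$ forces $\cR(\psi)=0$), and then concludes by the elementary fact that a decreasing chain of finite-dimensional spaces bounded below stabilizes at its intersection. Your argument instead pins down finite-dimensionality at a single value $\de=0$, pushes $K=\Ker D_{Y^b,B_\APS(0)}$ forward to the finite-dimensional Cauchy data space $V=\cR(K)$ via unique continuation, and then replaces the stabilization step by the strong convergence $Q_{\ge-\de}\to\id$ of spectral projections, upgraded to norm convergence on $V$. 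The two proofs use the same inputs (Fredholmness from Theorem~\ref{bio4thm1}, unique continuation, and discreteness of $\Spec(A_X)$) but package them differently: the paper's version avoids any limit of operators and is a touch more economical, while yours makes explicit that the finite-dimensional object controlling the problem is the Cauchy data space $V$, which aligns nicely with the viewpoint of \S\ref{bio42}. Both are fine.
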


\begin{proof}
Firstly, if $\varepsilon>\de,$ then $\Ker D_{Y,B_\APS(-\varepsilon)}\subset\Ker D_{Y,B_\APS(-\de)}.$ Secondly, we have $\bigcap_{\de\in\R}\Ker D_{Y,B_\APS(-\de)}=\Ker D_{Y^c}$ because if $\psi\in\Ker D_Y$ satisfies $\cR(\psi)\in B_\APS(-\de)\allowbreak=\Eig_{(-\iy,\de)}(A_X)$ for all $\de\in\R,$ then $\cR(\psi)=0$ and thus $\psi|_{Y^b}=0$ by the Unique Continuation Theorem. Of course, $\Ker D_{Y^c}\subset\Ker D_{Y,B_\APS(-\de)}$ for all $\de$ and the previous two facts imply that the dimensions of these subspaces are the same for $\de$ sufficiently large, so these subspaces are in fact equal.
\end{proof}

\begin{prop}
\label{bio4prop2}
Let\/ $B_1,$ $B_2$ be elliptic boundary conditions for\/ $D_Y$ with\/ $B_1\subset B_2$ and\/ $\dim_\K B_1^\perp\cap B_2<\iy.$ Let\/ $\pi\colon\Ga_{L^2}(F_0|_X)\to B_1^\perp\cap B_2$ be the orthogonal projection. The maps\/ $\pi\cR|_{\Ker D_{Y,B_2}},$ $\pi\si^*\cR|_{\Ker D_{Y,B_1^\adj}^*}$ factor through embeddings
\ea
 \Ker D_{Y,B_2}/\Ker D_{Y,B_1}&\xrightarrow{\pi\cR}B_1^\perp\cap B_2,\label{bio4eq4}\\
 \Ker D_{Y,B_1^\adj}^*/\Ker D_{Y,B_2^\adj}^*&\xrightarrow{\pi\si^*\cR} B_1^\perp\cap B_2,\label{bio4eq5}
\ea
whose images are orthogonal complements of each other. In particular,
\e
\begin{aligned}
 B_1^\perp\cap B_2={}&\pi\cR\Bigl(\Ker D_{Y,B_2}/\Ker D_{Y,B_1}\Bigr)\\
 &\op\pi\si^*\cR\Bigl(\Ker D_{Y,B_1^\adj}^*/\Ker D_{Y,B_2^\adj}^*\Bigr).
\end{aligned}
\label{bio4eq6}
\e
\end{prop}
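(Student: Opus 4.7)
The plan is to combine Green's formula for the cylindrical form \eqref{bio4eq1} with the relative index formula for a nested pair $B_1\subset B_2$ of elliptic boundary conditions. For the injections \eqref{bio4eq4} and \eqref{bio4eq5}: since $B_1\subset B_2$, clearly $\Ker D_{Y,B_1}\subset\Ker D_{Y,B_2}$, and using the orthogonal splitting $B_2 = B_1\oplus(B_1^\perp\cap B_2)$ of closed subspaces of $\Ga_{L^2}(F_0|_X)$, the component of $\cR(\psi)$ in $B_1^\perp\cap B_2$ equals $\pi\cR(\psi)$, which vanishes iff $\cR(\psi)\in B_1$. This yields \eqref{bio4eq4}. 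On the adjoint side, orthogonality of $\si$ gives $(B_2^\adj)^\perp\cap B_1^\adj = \si(B_2)\cap\si(B_1)^\perp = \si(B_1^\perp\cap B_2)$, hence the orthogonal splitting $B_1^\adj = B_2^\adj\oplus\si(B_1^\perp\cap B_2)$. Writing $\cR(\phi) = b + \si(d)$ with $b\in B_2^\adj$ and $d\in B_1^\perp\cap B_2$ for $\phi\in\Ker D_{Y,B_1^\adj}^*$, I obtain $\si^*\cR(\phi) = \si^*b + d$ with $\si^*b\in B_2^\perp$, so $\pi\si^*\cR(\phi) = d$, which vanishes iff $\phi\in\Ker D_{Y,B_2^\adj}^*$.

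Next I will apply Green's formula. Integration by parts in $t$ over the collar for $D_Y = \si(\6/\6 t + A_X)$ yields
\[
\langle D_Y u,v\rangle_{L^2(Y,F_1)} - \langle u,D_Y^*v\rangle_{L^2(Y,F_0)} = -\langle\cR u,\si^*\cR v\rangle_{L^2(X,F_0|_X)}.
\]
For $u\in\Ker D_{Y,B_2}$ and $v\in\Ker D_{Y,B_1^\adj}^*$ the left side vanishes, so $\langle\cR u,\si^*\cR v\rangle = 0$. Decomposing $\cR u = a_1 + \pi\cR u$ with $a_1\in B_1$ and $\si^*\cR v = \pi\si^*\cR v + c$ with $c\in B_2^\perp$ as above, all three cross pairings vanish using $B_1\perp B_1^\perp$ and $B_2\perp B_2^\perp$, leaving $\langle\pi\cR u,\pi\si^*\cR v\rangle = 0$. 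The embedded images of \eqref{bio4eq4} and \eqref{bio4eq5} are thus mutually orthogonal inside the finite-dimensional $B_1^\perp\cap B_2$.

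To conclude the direct sum decomposition \eqref{bio4eq6}, I will invoke the relative index formula
\[
\ind D_{Y,B_2} - \ind D_{Y,B_1} = \dim_\K(B_1^\perp\cap B_2),
\]
which by \eqref{bio4eq3} equals the sum of the two image dimensions, forcing the orthogonal sum to exhaust $B_1^\perp\cap B_2$. The hard part will be establishing this relative index formula in the present framework. It is classical (Agranovich--Dynin, Palais) and for nested APS conditions is a direct consequence of tracking eigenvalue crossings; in the Ballmann--Bär setting of \cite{BaBa1} I expect either to cite it from that source or to reduce via a finite flag $B_1\subset B_1'\subset\cdots\subset B_2$ to the codimension-one case, where the index jump follows from a one-line comparison of kernels.
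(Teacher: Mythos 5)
Your proposal follows the same route as the paper: decompose $B_2=B_1\op(B_1^\perp\cap B_2)$ to get the embeddings, invoke Green's formula (the paper cites \cite{BaBa1}*{Lem.~2.6}) to show the projected Cauchy data pair to zero, and conclude by a dimension count via the relative index formula combined with \eqref{bio4eq3}. The step you flag as potentially hard—the relative index formula $\ind D_{Y,B_2}-\ind D_{Y,B_1}=\dim_\K B_2/B_1$—is indeed available off the shelf as \cite{BaBa1}*{Cor.~8.8}, so no flag argument or codimension-one reduction is needed.
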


\begin{proof}
From $B_1\subset B_2$ we have $B_2=B_1\op(B_1^\perp\cap B_2).$ Suppose $\pi\cR(\psi)=0$ with $\psi\in\Ker D_{Y,B_2}.$ Then $D_Y(\psi)=0,$ $\cR(\psi)\in B_2,$ and $\cR(\psi)\perp(B_1^\perp\cap B_2).$ Hence $\cR(\psi)\in B_1$ and so $\psi\in\Ker D_{Y,B_1}.$ This shows that $\pi\cR|_{\Ker D_{Y,B_2}}$ factors through an embedding \eqref{bio4eq4}; the proof for \eqref{bio4eq5} is similar.

Let $\psi_1\in\Ker D_{Y,B_1^\adj}^*,$ $\psi_2\in\Ker D_{Y,B_2}.$ Then $D_Y^*(\psi_1)=0,$ $\si^*\cR(\psi_1)\in B_1^\perp,$ $D_Y(\psi_2)=0,$ and $\cR(\psi_2)\in B_2.$ By Green's formula \cite{BaBa1}*{Lem.~2.6} we have
\[
 0=\int_Y\ban{D_Y^*(\psi_1),\psi_2}+\int_Y\ban{\psi_1,D_Y(\psi_2)}=\int_X\ban{\cR(\psi_1),\si(\cR(\psi_2))}.
\]
Thus $\an{\si^*\cR(\psi_1),\cR(\psi_2)}_{L^2}=0$ and since $\si^*\cR(\psi_1)\in B_1,$ $\cR(\psi_2)\in B_2^\perp$ the inner product can also be evaluated after taking the orthogonal projection,
\[
 \ban{\pi\si^*\cR(\psi_1),\pi\cR(\psi_2)}_{B_1^\perp\cap B_2}=\ban{\si^*\cR(\psi_1),\cR(\psi_2)}_{L^2}=0.
\]
It follows that the images of the embeddings \eqref{bio4eq4}, \eqref{bio4eq5} are perpendicular.

For elliptic boundary conditions $B_1\subset B_2,$ \cite{BaBa1}*{Cor.~8.8} shows that $B_2/B_1$ is finite-dimensional and
\e
\label{bio4eq7}
 \dim_\K B_2/B_1=\ind D_{Y,B_2}-\ind D_{Y,B_1}.
\e
Now \eqref{bio4eq3} and \eqref{bio4eq7} show that the dimension of the sum of the images of the embeddings \eqref{bio4eq4}, \eqref{bio4eq5} agrees with the dimension of the ambient space $B_1^\perp\cap B_2,$ so the images are indeed complementary subspaces.
\end{proof}

\subsubsection{Transmission conditions}
\label{bio411}

In preparation of the proof of Theorem~\ref{bio3thm2}(a), suppose that $Y$ contains a cylinder $\jmath_{X_0}\colon X_0\t(-\varepsilon,\varepsilon)\to Y$ over which $D_Y$ takes the form \eqref{bio4eq1}. Let $Y'$ be the manifold with boundary obtained by cutting $Y$ open along $X_0$ so that $\6 Y'$ consists of $\6 Y$ and two copies $X_\pm$ of $X_0.$ Let $F_0', F_1'\to Y'$ be the pullback vector bundles and let $D_{Y'}$ be the pullback differential operator on $Y'.$ For the collars $\jmath_{X_\pm}\colon X_\pm\t[0,\varepsilon)\to Y',$ $(x,t)\mapsto \jmath_{X_0}(x,\pm t)$ of $X_\pm$ we have
\e
\label{bio4eq8}
 \jmath_{X_+}^*(D_{Y'})=\si\left(\frac{\6}{\6 t}+A_{X_0}\right),\quad
 \jmath_{X_-}^*(D_{Y'})=(-\si)\left(\frac{\6}{\6 t}-A_{X_0}\right),
\e
and hence $A_{X_\pm}=\pm A_{X_0}$ where we identify $F_0'|_{X_\pm}\cong F_0|_{X_0}.$ Observe that $\si$ changes sign over $X_-$ by our convention that $\6/\6 t$ points inwards.

The \emph{transmission} condition as in \cite{BaBa1}*{Ex.~7.28} is
$
 B_\trans(X_0)=\{
 (\psi_+,\psi_-)\in\Ga^\iy(F_0|_{X_+})\op\Ga^\iy(F_0|_{X_-})
 \mid
 \psi_-=\psi_+
 \}.
$
If $\6 Y=\es,$ then $B_\trans(X_0)$ is an elliptic boundary condition over $\6 Y'=X_+\amalg X_-.$ (In general, we should combine $B_\trans(X_0)$ with an arbitrary elliptic boundary condition $B$ over $\6 Y$ to get an elliptic boundary condition $B\op B_\trans(X_0)$ over $\6 Y'=\6 Y\amalg X_+\amalg X_-.$)
%Reason:
%If $\6 Y=\emptyset,$ then $B_\trans$ is an elliptic boundary condition: let $\de\ge0$ and
%\ea*
% V_+&=\Eig_{(\de,+\iy)}(A_{X_-}\amalg A_{X_+})=\Eig_{(-\iy,-\de)}(A_{X_0})\op\Eig_{(\de,+\iy)}(A_{X_0}),\\
% V_-&=\Eig_{(-\iy,-\de)}(A_{X_-}\amalg A_{X_+})=\Eig_{(\de,+\iy)}(A_{X_0})\op\Eig_{(-\iy,-\de)}(A_{X_0}),\\
% W_\pm&=\Ga\bigl(\pm\id_{\Eig_{[-\de,\de]}(A_{X_0})}\bigr)\subset\Eig_{[-\de,\de]}(A_{X_0})\op\Eig_{[-\de,\de]}(A_{X_0}),\\
% g&=
% \begin{pmatrix}
% 0 & \id_{\Eig_{(-\iy,-\de)}(A_{X_0})}\\
% \id_{\Eig_{(\de,+\iy)}(A_{X_0})} & 0
% \end{pmatrix}\colon V_-\to V_+.
%\ea*
%It is easy to check that $B_\trans=W_+\op\Ga(g).$
%
For all $\de\ge 0$ there is a continuous deformation of elliptic boundary conditions
\e
\label{bio4eq9}
 B_\APS(-\de)\op\Ga\bigl(\id_{\Eig_{[-\de,\de]}(A_{X_0})}\bigr)\simeq B_\trans(X_0).
\e

\subsection{Cauchy data and nearly APS boundary conditions}
\label{bio42}

This section introduces the concept of a nearly APS boundary condition.

\begin{dfn}
\label{bio4def1}
The subspace of \emph{Cauchy data} for the operator $D_Y$ is
\[
 C(D_Y)
 =\cR(\Ker D_Y)
 =
 \left\{
 \phi\in\Ga^\iy(F_0|_X)
 \;\middle|\;
 \exists \psi\in\Ker D_Y: \cR(\psi)=\phi
 \right\}.
\]
\end{dfn}

If $B$ is an elliptic boundary condition, then $\cR$ restricts to an isomorphism
\e
\label{bio4eq10}
 \cR|_{\Ker D_{Y^b,B}}\colon\Ker D_{Y^b,B}\overset{\cong}{\longra}C(D_Y)\cap B.
\e

In particular, Theorem~\ref{bio4thm1} implies that $C(D_Y)\cap B$ is finite-dimensional. It is sometimes possible to replace this intersection of infinite-dimensional subspaces by an intersection inside a finite-dimensional space.

\begin{dfn}
\label{bio4def2}
A boundary condition is \emph{nearly APS} if it can be written as $B=B_\APS(-\de)\op L_\de$ for some $\de\ge0$ and a finite-dimensional subspace $L_\de\subset\Eig_{[-\de,\de]}(A_X).$ For all $0\le\de<\varepsilon$ it is then again possible to write $B=B_\APS(-\varepsilon)\op L_\varepsilon$ with $L_\varepsilon=\Eig_{[-\varepsilon,-\de)}(A_X)\op L_\de.$
\end{dfn}

\begin{dfn}
\label{bio4def3}
Let $\pi_\de$ be the orthogonal projection of $\Ga_{L^2}(F_0|_X)$ onto the closed subspace $\Eig_{[-\de,\de]}(A_X).$ The space of \emph{$\de$-projected Cauchy data} for $D_Y$ is
\[
 C_\de(D_Y)=\left\{\varphi\in\Eig_{[-\de,\de]}(A_X) \;\middle|\; \exists\psi\in\Ker D_Y: \cR(\psi)-\varphi\in B_\APS(-\de)\right\}.
\]
\end{dfn}

\begin{prop}
\label{bio4prop3}
Let\/ $B$ be a nearly APS boundary condition, written as\/ $B=B_\APS(-\de)\op L_\de$ for large\/ $\de\ge0.$ Then for all sufficiently large\/ $\de\ge 0$ the map\/ $\pi_\de\circ\cR\colon\Ga^\iy(F_0)\to\Eig_{[-\de,\de]}(A_X)$ restricts to an embedding of\/ $\Ker D_{Y^b,B}$ into\/ $\Eig_{[-\de,\de]}(A_X).$ For all\/ $\de\ge 0$ the image is\/ $\pi_\de(C(D_Y)\cap B)=C_\de(D_Y)\cap L_\de.$
\end{prop}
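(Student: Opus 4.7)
The plan is to split the proposition into its two assertions and reduce each to a direct verification, since most of the work has already been packaged into Proposition~\ref{bio4prop1} and the isomorphism \eqref{bio4eq10}. For the embedding claim, the key observation is that \eqref{bio4eq10} already gives an isomorphism $\cR\colon \Ker D_{Y^b,B}\overset{\cong}{\longra} C(D_Y)\cap B$, so it suffices to prove that $\pi_\de$ is injective on $C(D_Y)\cap B$ once $\de$ is large enough.

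To do this I would take $\phi\in C(D_Y)\cap B$ and, using the nearly APS decomposition, write $\phi=b_1+b_2$ with $b_1\in B_\APS(-\de)$ and $b_2\in L_\de\subset\Eig_{[-\de,\de]}(A_X)$. Since $\pi_\de(b_1)=0$ and $\pi_\de(b_2)=b_2$, the assumption $\pi_\de(\phi)=0$ forces $\phi\in B_\APS(-\de)$. Via \eqref{bio4eq10}, the preimage $\psi\in\Ker D_{Y^b,B}$ then satisfies $\cR(\psi)\in B_\APS(-\de)$, i.e.\ $\psi\in\Ker D_{Y^b,B_\APS(-\de)}$. Applying Proposition~\ref{bio4prop1} to $D_{Y^b}$ in place of $D_Y$, with closed part $(Y^b)^c=\es$, yields $\Ker D_{Y^b,B_\APS(-\de)}=\Ker D_\es=0$ for all sufficiently large $\de$. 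If the $\de$ supplied by Definition~\ref{bio4def2} is smaller than what is needed here, the rewriting $B=B_\APS(-\varepsilon)\op L_\varepsilon$ with $L_\varepsilon=\Eig_{[-\varepsilon,-\de)}(A_X)\op L_\de$ allows us to enlarge $\de$ without changing $B$, so the argument is unaffected.

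For the image identity $\pi_\de(C(D_Y)\cap B)=C_\de(D_Y)\cap L_\de$, both inclusions are purely algebraic consequences of $B=B_\APS(-\de)\op L_\de$ and Definition~\ref{bio4def3}. For $\subseteq$, given $\phi=\cR(\psi)\in C(D_Y)\cap B$ decomposed as $\phi=b_1+b_2$, one sees $\pi_\de(\phi)=b_2\in L_\de$, and $\cR(\psi)-b_2=b_1\in B_\APS(-\de)$ shows $b_2\in C_\de(D_Y)$. For $\supseteq$, given $\varphi\in C_\de(D_Y)\cap L_\de$ witnessed by $\psi\in\Ker D_Y$ with $\cR(\psi)-\varphi\in B_\APS(-\de)$, the identity $\cR(\psi)=(\cR(\psi)-\varphi)+\varphi$ displays $\cR(\psi)\in B_\APS(-\de)+L_\de=B$, and $\pi_\de$ applied to this decomposition recovers $\varphi$ because $\pi_\de$ annihilates the first summand and fixes the second.

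The only delicate point is matching the quantifier on $\de$: the image formula is bookkeeping with the orthogonal splitting of $\Ga_{L^2}(F_0|_X)$ into $B_\APS(-\de)$, $\Eig_{[-\de,\de]}(A_X)$, $\Eig_{(\de,\iy)}(A_X)$ and hence holds for every $\de\ge 0$, whereas the injectivity of $\pi_\de\circ\cR$ on $\Ker D_{Y^b,B}$ genuinely requires $\de$ sufficiently large, via Proposition~\ref{bio4prop1}. I expect no further obstacles beyond being explicit about this compatibility.
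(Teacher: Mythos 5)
Your proof is correct and takes essentially the same route as the paper. For the injectivity claim the paper likewise reduces to observing that elements of $\Ker D_{Y^b,B}$ with vanishing $\pi_\de\cR$ must have $\cR(\psi)\in B\cap\Eig_{(-\iy,-\de)\cup(\de,\iy)}(A_X)=B_\APS(-\de)$ and then invokes Proposition~\ref{bio4prop1} on $D_{Y^b}$ (where $(Y^b)^c=\es$). For the image identity the paper gives the same two inclusions; if anything, your forward inclusion is phrased a little more carefully. The paper writes $\pi_\de(C(D_Y)\cap B)\subset\pi_\de(C(D_Y))\cap\pi_\de(B)\subset C_\de(D_Y)\cap L_\de$, and the middle containment tacitly uses that a $\phi\in B$ has no component in $\Eig_{(\de,\iy)}(A_X)$, so that $\phi-\pi_\de(\phi)\in B_\APS(-\de)$; your decomposition $\phi=b_1+b_2$ makes this explicit from the start. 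Your remark on the quantifier over $\de$ (the image formula holding for all $\de\ge0$, injectivity only for $\de$ large) also matches the paper's intent.
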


\begin{proof}
The kernel of $\pi_\de\cR|_{\Ker D_{Y^b,B}}$ is the set of all $\psi\in\Ga^\iy(F_0|_{Y^b})$ satisfying $D_Y(\psi)=0,$ $\cR(\psi)\in B,$ and $\cR(\psi)\in\Eig_{(-\iy,-\de)\cup(\de,+\iy)}(A_X).$ For large $\de$ we have $B=B_\APS(-\de)\op L_\de$ and thus $B\cap \Eig_{(-\iy,-\de)\cup(\de,+\iy)}(A_X)=B_\APS(-\de).$ By Proposition~\ref{bio4prop1}, $B_\APS(-\de)\cap\Ker D_{Y^b,B}=\{0\}$ for all sufficiently large $\de,$ hence $\pi_\de\cR|_{\Ker D_{Y^b,B}}$ is injective for large $\de,$ with image $\pi_\de(C(D_Y)\cap B),$ see \eqref{bio4eq10}.

Next, we identify the image. Clearly $\pi_\de(C(D_Y)\cap B)\subset \pi_\de(C(D_Y))\cap \pi_\de(B)\subset C_\de(D_Y)\cap L_\de.$ Conversely, if $\varphi\in C_\de(D_Y)\cap L_\de$ then there is $\psi\in\Ker D_Y$ such that $\cR(\psi)-\varphi\in B_\APS(-\de).$ Therefore $\cR(\psi)\in\varphi+B_\APS(-\de)\subset L_\de\op B_\APS(-\de)=B$ and hence $\cR(\psi)\in C(D_Y)\cap B.$ Moreover, $\pi_\de\cR(\psi)=\pi_\de(\varphi+B_\APS(-\de))=\varphi,$ thus showing $\varphi\in \pi_\de(C(D_Y)\cap B).$
\end{proof}

If follows that for a nearly APS boundary condition and sufficiently large $\de$ we may equivalently take the intersection $C(D_Y)\cap B$ in the \emph{finite-dimensional} space $\Eig_{[-\de,\de]}(A_X)$ in the sense that of an isomorphism $\pi_\de\colon C(D_Y)\cap B\to C_\de(D_Y)\cap L_\de.$

\begin{prop}
\label{bio4prop4}
If\/ $D_Y$ is formally self-adjoint or formally skew-adjoint, then there is an orthogonal decomposition\/ $\Eig_{[-\de,\de]}(A_X)=C_\de(D_Y)\op\si(C_\de(D_Y)).$
\end{prop}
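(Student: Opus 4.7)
The plan is to apply Proposition~\ref{bio4prop2} with $B_1=B_\APS(-\de)$ and $B_2=B_\APS(\de)$ for $\de$ large and not an eigenvalue of $A_X,$ so that $B_1\subset B_2$ and $B_1^\perp\cap B_2=\Eig_{[-\de,\de]}(A_X)$ is exactly the ambient space in the claim.

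The first step is to translate the hypothesis into structural constraints on $\si$ and $A_X$ over the collar. Writing $D_Y=\si(\6/\6 t+A_X)$ and comparing coefficients in $(D_Y)^*=\pm D_Y$---using that $\6/\6 t$ is formally skew-adjoint and $A_X$ self-adjoint---yields $\si^*=\mp\si$ and $\{\si,A_X\}=0$; these identities can also be read off directly from each of the cylinder formulas \eqref{bio2eq2}--\eqref{bio2eq8} in Definition~\ref{bio2def5}. Anti-commutation of $\si$ with $A_X$ implies that $\si$ exchanges $\Eig_\la(A_X)$ with $\Eig_{-\la}(A_X),$ hence preserves $\Eig_{[-\de,\de]}(A_X)$ and commutes with the orthogonal projection $\pi_\de.$ It also gives $\si B_\APS(-\de)=\Eig_{(\de,\iy)}(A_X),$ so $B_1^\adj=B_2$ and $B_2^\adj=B_1.$ Finally, $D_Y^*=\pm D_Y$ identifies $\Ker D_{Y,B}^*$ with $\Ker D_{Y,B}$ for every boundary condition $B.$

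With these identifications, Proposition~\ref{bio4prop2} produces the orthogonal decomposition
\[
 \Eig_{[-\de,\de]}(A_X)=\pi_\de\cR\bigl(\Ker D_{Y,B_2}/\Ker D_{Y,B_1}\bigr)\op\pi_\de\si^*\cR\bigl(\Ker D_{Y,B_2}/\Ker D_{Y,B_1}\bigr).
\]
To identify the first summand with $C_\de(D_Y),$ I use Proposition~\ref{bio4prop1} to note that $\Ker D_{Y,B_\APS(-\de)}=\Ker D_{Y^c},$ on which $\cR$ vanishes; hence $\pi_\de\cR(\Ker D_{Y,B_2}/\Ker D_{Y,B_1})=\pi_\de\cR(\Ker D_{Y,B_\APS(\de)}),$ which by the argument in the proof of Proposition~\ref{bio4prop3} (with $L_\de=\Eig_{[-\de,\de]}(A_X),$ so $B_\APS(\de)=B_\APS(-\de)\op L_\de$) equals $C_\de(D_Y).$ Since $\si^*$ commutes with $\pi_\de$ and $\si^*=\pm\si$ acts on subspaces up to sign, the second summand is $\si(C_\de(D_Y)),$ which is the claim.

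The main obstacle is verifying $\si^*=\mp\si$ and $\{\si,A_X\}=0$ uniformly across the six classes of Definition~\ref{bio2def5} where (skew-)adjointness is imposed, since for $\ell+1\equiv 2,5,6$ the map $\si$ is not a plain endomorphism but involves complex conjugation or quaternionic twisting of bundles. Each case nevertheless reduces to a short explicit computation from the cylinder formulas. A minor technical point is that $\de$ must avoid the spectrum of $A_X$ for $B_1^\perp\cap B_2$ to equal $\Eig_{[-\de,\de]}(A_X)$; the general case follows by replacing $\de$ with a slightly larger $\de'\notin\mathrm{spec}(A_X),$ since both sides of the asserted decomposition are unchanged in the process.
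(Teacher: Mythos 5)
Your proof is essentially the paper's proof: both reduce the claim to Proposition~\ref{bio4prop2} with $B_1=B_\APS(-\de)$ and $B_2$ the APS condition at threshold $\de$, derive $\{\si,A_X\}=0$ (so $\si$ preserves $\Eig_{[-\de,\de]}(A_X)$ and commutes with $\pi_\de$, and $B_1^\adj=B_2,$ $B_2^\adj=B_1$), use (skew-)adjointness to identify the two quotient spaces in Proposition~\ref{bio4prop2}, identify the image of \eqref{bio4eq4} with $C_\de(D_Y)$ via Proposition~\ref{bio4prop3}, and read off $\si(C_\de(D_Y))$ as the orthogonal complement. (Your signs $\si^*=\mp\si$ correct the paper's loosely stated $\si^*=\pm\si$; this does not affect the argument since only $\si^*=\pm\si$ as an unspecified sign is used.)

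There is, however, one genuine gap: you open with ``for $\de$ large,'' and this restriction is never removed. You invoke Proposition~\ref{bio4prop1} to conclude $\Ker D_{Y,B_\APS(-\de)}=\Ker D_{Y^c}$, which indeed holds only for $\de$ sufficiently large, and your closing remark upgrades only from $\de\notin\Spec(A_X)$ to arbitrary $\de$ of the same magnitude—it does not reach small $\de$. The proposition is needed (and invoked in \S\ref{bio43}) with $\de=0$, so this matters. The invocation of Proposition~\ref{bio4prop1} is in fact unnecessary: what you need is that $\pi_\de\cR$ annihilates $\Ker D_{Y,B_1}$, and this is immediate for \emph{every} $\de\ge0$, because $\cR(\Ker D_{Y,B_1})\subset B_1=B_\APS(-\de)=\Eig_{(-\iy,-\de)}(A_X)$, which is orthogonal to $\Eig_{[-\de,\de]}(A_X)$; equivalently, this is exactly the content of the factorization \eqref{bio4eq4} already supplied by Proposition~\ref{bio4prop2}. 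Dropping the appeal to Proposition~\ref{bio4prop1}, the rest of your argument (Proposition~\ref{bio4prop3} applied to $B_2$, which gives $\pi_\de(C(D_Y)\cap B_2)=C_\de(D_Y)$ for all $\de\ge0$) goes through uniformly in $\de$. You may also note, as the paper does, that taking $B_2=\Eig_{(-\iy,\de]}(A_X)$ avoids the $\de\in\Spec(A_X)$ perturbation entirely, though your perturbation argument is correct as stated.
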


\begin{proof}
Putting the condition $D_Y^*=\pm D_Y$ into \eqref{bio4eq1} shows $\si^*=\si^{-1}=\pm\si$ and $A_X\si^*=\pm\si A_X,$ hence $A_X$ and $\si$ anti-commute in both cases. In particular, $\si$ preserves $\Eig_{[-\de,\de]}(A_X)$ and commutes with the projection $\pi_\de$ onto this subspace. Moreover, for $B_1=\Eig_{(-\iy,-\de)}(A_X),$ $B_2=\Eig_{(-\iy,\de]}(A_X)$ we have $B_1^\adj=B_2,$ $B_2^\adj=B_1.$ Now put this into Proposition~\ref{bio4prop2}, observing that $B_1^\perp\cap B_2=\Eig_{[-\de,\de]}(A_X),$ $\pi=\pi_\de,$ and that by the description of the image in Proposition~\ref{bio4prop3} applied to $D_{Y^b}$ and $B=B_2$ the subspace that appears in \eqref{bio4eq6} is $
 \pi\cR\bigl(\Ker D_{Y^b,B_2}/\Ker D_{Y^b,B_1}\bigr)=C_\de(D_Y).$
\end{proof}

\subsection{\texorpdfstring{Proof of Theorem~\ref{bio3thm2}}{Proof of Theorem 3.2}}
\label{bio43}

We now begin the proof of the main result. Note that formula \eqref{bio3eq3} can be verified at each $t\in T.$ Moreover, we can construct the isomorphism \eqref{bio3eq4} over each $t\in T$ individually and then prove continuity. For this it is important to allow non-zero cut-offs $\de\ge 0$ in the construction because the zero eigenspace by itself would have discontinuous jumps.

\begin{dfn}
\label{bio4def4}
Let $D_X$ and $D_Y$ be $\ell$-adapted and $(\ell+1)$-adapted elliptic differential operator on $X$ and $Y,$ respectively, and suppose that $\jmath_X^*(D_Y)\cong\Cyl(D_X)$ over a collar neighborhood. Observe that for every $\ell$ we can write $\jmath_X^*(D_Y)$ in the form \eqref{bio4eq1} with $A_X$ self-adjoint. For all $\de\ge 0$ define
\[
 W_\de(D_X)=\Eig_{[-\de,\de]}(A_X)
\]
which we equip with the non-degenerate bilinear form
\e
\be_\de(\psi_1,\psi_2)=\int_X\ban{\si_{\d t}(D_Y)(\psi_1),\psi_2}.
\label{bio4eq11}
\e
Varying the cut-off, we find that for all $0\le\de<\varepsilon$ there are isomorphisms
\e
\label{bio4eq12}
 \Lag W_\de(D_X)\longra\Lag W_\varepsilon(D_X),
\quad
L_\de\longmapsto L_\de\op\Eig_{[-\varepsilon,-\de)}(A_X).
\e
\end{dfn}

The `quadratic space' $W_\de(D_X)$ plays a central role in the proof of Theorem~\ref{bio3thm2}. For example, in the context of the transmission condition the diagonal subspace $B_\trans(X_0)\cap W_\de(D_X)=\Ga(\id_{\Eig_{[-\de,\de]}(A_{X_0})})$ is `isotropic' (note that $\si$ changes sign over $X_-$ in \eqref{bio4eq8}). Since the exact type of $\be_\de$ depends on $\ell$ mod $8,$ we proceed case-by-case.

\subsubsection{\texorpdfstring{Proof of Theorem~\textup{\ref{bio3thm2}} in the case $\ell\equiv 0$}{Proof of Theorem 3.2 in the case ℓ≡0}}

\label{bio431}

According to the definition of $\ell$-adapted and \eqref{bio2eq2}, $D_X\colon\Ga^\iy(E_0)\to\Ga^\iy(E_1)$ is an $\R$-linear differential operator, $F_0|_{\6 Y}=F_1|_{\6 Y}=E_0\op E_1,$ and
\e
\label{bio4eq13}
 \Cyl(D_X)=\si
 \left(\frac{\6}{\6 t}+A_X\right),
 \quad
 \si
 =
 \begin{pmatrix}
 1 & 0\\
 0 & -1	
 \end{pmatrix},
 \quad
 A_X=
 \begin{pmatrix}
 0   & D_X^*\\
 D_X & 0
 \end{pmatrix}.
\e

Hence $W_\de(D_X)$ is a real vector space and $\be_\de$ is a non-degenerate symmetric $\R$-bilinear form if $\ell\equiv0.$

\subsubsection*{Real symmetric bilinear forms}

A non-degenerate symmetric $\R$-bilinear form $\be$ on a finite-dimensional real vector space $W$ is uniquely determined up to isomorphism by its dimension and its inertia index. A subspace $L\subset W$ is \emph{Lagrangian} if $L^{\perp_\be}=L.$ By Meinrenken~\cite{Mein13}*{Thm.~1.3}, the set $\Lag W$ of Lagrangian subspaces is either empty or a homogeneous space, isomorphic to an orthogonal group $\O(L).$ By convention, $\Lag W=\{\pm1\}$ if $W=\{0\}.$ Lagrangian subspaces exist if and only if $\dim_\R W$ is even and the inertia index vanishes.

Let $V^\pm$ be Euclidean vector spaces and define $W=V^+\op V^-$ and $\be=\an{,}_{V^+}-\an{,}_{V^-}.$ The inertia index of $W$ is $\dim_\R V^+-\dim_\R V^-,$ so if $\dim V^+=\dim V^-,$ then $\Lag W\neq\emptyset.$ Observe that every Lagrangian subspace of $W$ is the graph $L=\Ga(f)$ of a unique orthogonal isomorphism $f\colon V^+\to V^-$; for the intersection of graphs we have $\Ga(f_1)\cap\Ga(f_2)=\Eig_{+1}(f_1f_2^{-1}).$ From this we find that if $L\in\Lag W$ and $f\colon V^+\to V^-,$ $g\colon V^+\to V^+$ are orthogonal isomorphisms, then
\e
\label{bio4eq14}
 (-1)^{\dim_\R\Ga(f)\cap L}=\det(g)(-1)^{\dim_\R\Ga(fg)\cap L}.
\e
As $\det(g)\in\{\pm1\}$ we see in particular that the left-hand side of \eqref{bio4eq14} only depends on the path-component of $f.$

%Proof of \eqref{bio4eq14}
%
%\begin{lem}
%\label{lem0}
%Let\/ $L\subset V$ be a Lagrangian subspace and let\/ $f\colon U\to W,$ $g\in U\to U$ be orthogonal isomorphisms. Then\/ $(-1)^{\dim_\R\Ga(f)\cap L}=\det(g)(-1)^{\dim_\R\Ga(fg)\cap L}$ and, in particular, this number only depends on\/ $f$ up to deformation.
%\end{lem}
%
%\begin{proof}
%Using $f\op \id_W$ to identify $V$ with $W\op W,$ we may assume $U=W$ and $f=\id_U,$ so $\Ga(f)=\De(U).$ Every Lagrangian subspace of $U\op U$ has the form $L=\Ga(h)$ for some $h\in \O(U)$ and $\De(U)\cap\Ga(h)=\Eig_1(h)$ is an eigenspace of $h.$ Similarly, $\Ga(g)\cap\Ga(h)\cong\De(U)\cap\Ga(hg^{-1})=\Eig_1(hg^{-1}).$ As the only eigenvalues of an orthogonal transformation are $\pm 1,$ this implies $(-1)^{\dim_\R \Ga(f)\cap L}=(-1)^{\dim_\R U}\det(h)$ and $(-1)^{\Ga(fg)\cap L}=(-1)^{\dim_\R U}\det(hg^{-1}).$
%% The claim therefore follows from the multiplicativity of the determinant.
%
%As every continuous path of orthogonal isomorphisms $f_t\colon U\to W$ can be written in the form $f_t=f_{t_0}\circ g_t,$ the deformation invariance of $(-1)^{\dim_\R\Ga(f)\cap L}$ follows from this formula and the continuity of the determinant.
%\end{proof}

\subsubsection*{Proof of bordism invariance \textup{\eqref{bio3eq3}}}

The spectral properties of $D_X$ and $A_X$ are closely linked: all non-zero eigenvalues of the self-adjoint Fredholm operators $D_X^*D_X$ and $D_XD_X^*$ agree with multiplicity as there are orthogonal isomorphisms
\e
\label{bio4eq15}
 \pm\la^{-1}D_X\colon\Eig_{\la^2}(D_X^*D_X)\longra\Eig_{\la^2}(D_XD_X^*)
\e
for all $\la>0$ whose graphs are the eigenspaces
\e
\label{bio4eq16}
 \Ga(\pm\la^{-1}D_X)=\Eig_{\pm\la}(A_X).
\e
% The spectrum of $A_X$ thus has a reflection symmetry about the origin.
In particular, \eqref{bio4eq11} decomposes as $W_\de(D_X)=V_{[0,\de]}^+(D_X)\op V_{[0,\de]}^-(D_X),$ where
\ea*
 V_{[0,\de]}^+(D_X)&=\Eig_{[0,\de^2]}(D_X^*D_X),
&V_{[0,\de]}^-(D_X)&=\Eig_{[0,\de^2]}(D_XD_X^*),
\ea*
and the inertia index of $W_\de(D_X)$ is
\ea*
\dim_\R\Eig_{[0,\de^2]}(D_X^*D_X)-\dim_\R\Eig_{[0,\de^2]}(D_XD_X^*)\overset{\smash{\eqref{bio4eq15}}}{=}\ind_\ell D_X.%=\dim_\R\Ker D_X-\dim_\R\Ker D_X^*
\ea*
On the other hand, $C_\de(D_Y)\subset W_\de(D_X)$ is a Lagrangian subspace by Proposition~\ref{bio4prop4}, so the inertia index vanishes. This proves \eqref{bio3eq3} for $\ell\equiv 0.$\hfill\qedsymbol

\subsubsection*{Lagrangians and orientations}

We already know $\Lag W_\de(D_X)\neq\es,$ which thus has two path components. We now identify the $\Z_2$-torsor $\pi_0\Lag W_\de(D_X)$ with the $\Z_2$-torsor of orientations $O(\DET_\R D_X)$ of the determinant line of $D_X.$

Every Lagrangian subspace $L_\de\subset W_\de(D_X)$ is the graph of an orthogonal isomorphism $f_\de\colon V_{[0,\de]}^+(D_X)\to V_{[0,\de]}^-(D_X)$ (note that the `preferred' isomorphism \eqref{bio4eq15} only works for $\la>0,$ so this is a genuine choice).
% The cut-off is used to make this a continuous choice.
Up to deformation, there are two possibilities for $f_\de,$ distinguished by the orientation $O(\det f_\de)=\R_{>0}\det f_\de$ of the determinant line $\det V_{[0,\de]}^+(D_X)\ot\det V_{[0,\de]}^-(D_X)^*$ as in \eqref{bioAeq6}. Inserting a sign, we thus obtain for all $\de\ge 0$ an isomorphisms of $\Z_2$-torsors
\ea
	\pi_0\Lag W_\de(D_X)
	&\overset{\cong}{\longra}
	O\bigl(\det V_{[0,\de]}^+(D_X)\ot\det V_{[0,\de]}^-(D_X)^*\bigr),
	\notag\\
	[L_\de]=[\Ga(f_\de)]&\longmapsto(-1)^{\dim_\R V_{[0,\de]}^+(D_X)}O(\det f_\de).
\label{bio4eq17}
\ea
These fit for all $0\le\de<\varepsilon$ into a commutative diagram (see \S\ref{bioA2} for notation)
\[
\begin{tikzcd}[row sep=tiny]
	\pi_0\Lag W_\de(D_X)\arrow[dd,"\eqref{bio4eq12}"]\rar{\eqref{bio4eq17}}&
	O\bigl(\det V_{[0,\de]}^+(D_X)\ot\det V_{[0,\de]}^-(D_X)^*\bigr)
	\arrow[dd,"{\stab_{\de,\varepsilon}(D_X)}"]
	\arrow[rd,"{\stab_{\de}(D_X)}",pos=0.3]& \\
	&& O(\DET_\R D_X),\\
	\pi_0\Lag W_\varepsilon(D_X)\rar{\eqref{bio4eq17}}&
	O\bigl(\det V_{[0,\varepsilon]}^+(D_X)\ot\det V_{[0,\varepsilon]}^-(D_X)^*\bigr)\arrow[ru,"{\stab_{\varepsilon}(D_X)}"',pos=0.3]& \\
\end{tikzcd}
\]
which justifies the sign in \eqref{bio4eq17}.
% Here the sign arises because $L_\varepsilon$ is obtained by adding the graphs $\Ga(-\la^{-1}D_X)$ for $\la\in(\de,\varepsilon],$ whereas the determinant line bundle uses $\Ga(+\la^{-1}D_X).$
It follows that there is an isomorphism
\e
\label{bio4eq18}
 \lim\limits_{\longra}\pi_0\Lag W_\de(D_X)\overset{\smash\cong}{\longra} O(\DET_\R D_X)
\e
of $\Z_2$-torsors. For a family of $\ell$-adapted differential operators $\cD_X=\{D_X(t)\},$ \eqref{bio4eq18} becomes a continuous isomorphism of principal $\Z_2$-bundles since \eqref{bio4eq17} is continuous over $T_\de\subset T,$ using the notation of Definition~\ref{bioAdef2}.

\begin{rem}
\label{bio4rem1}
Since $\Lag W_\de(D_X)$ is isomorphic to an orthogonal group, it carries also an interesting principal $\Z_2$-bundle $P\to\Lag W_\de(D_X).$ This is one categorical level higher than the set of path components considered here. This principal $\Z_2$-bundle relates to spin structures on $D_X.$ The author hopes to give applications of this observation to $K$-theoretic enumerative invariants and to spin structures on moduli spaces in future work.
\end{rem}

\subsubsection*{Construction of \textup{\eqref{bio3eq4}}}

The isomorphism \eqref{bio4eq18} depends on the principal symbol $\si_{\d t}(D_Y)$ of $D_Y$ over the boundary, which is all we have used so far. We next use the differential operator $D_Y$ to construct a trivialization of \eqref{bio4eq18}.

The key observation is that every Lagrangian subspace $L_\de\subset W_\de(D_X)$ determines a self-adjoint nearly APS boundary condition $B=B_\APS(-\de)\op L_\de$ for the differential operator $D_Y.$ As $B$ is self-adjoint, \eqref{bio4eq2} is a skew-adjoint Fredholm operator and hence the mod-two index $(-1)^{\dim_\R\Ker D_{Y,B}}$ depends continuously on $D_{Y,B}.$ In particular, there is a well-defined isomorphism
\[
 \pi_0\Lag W_\de(D_X)\xrightarrow{\smash{i_{Y,\de}}}\{\pm1\},
 \quad [L_\de]\longmapsto(-1)^{\dim_\R\Ker D_{Y,B_\APS(-\de)\op L_\de}}.
\]
(If $X=\emptyset,$ then $\pi_0\Lag W_\de(D_X)=\{\pm1\}$ and $i_{Y,\de}$ becomes multiplication by the mod-two index $(-1)^{\dim_\R\Ker D_Y}$ on the closed manifold $Y$ for all $\de\ge0.$)

Varying the cut-off, let $0\le\de<\varepsilon$ and $L_\varepsilon$ be the image of $L_\de$ under \eqref{bio4eq12}. By \eqref{bio4eq16} we have $B_\APS(-\de)\op L_\de=B_\APS(-\varepsilon)\op L_\varepsilon$ and so $i_{Y,\de}[L_\de]=i_{Y,\varepsilon}[L_\varepsilon].$ Hence $i_{Y,\de}$ factors through the canonical maps to an isomorphism
\e
\label{bio4eq19}
 \lim\limits_{\longra}\pi_0\Lag W_\de(D_X)\overset{\smash\cong}{\longra}\{\pm1\}.
\e

\begin{dfn}
\label{bio4def5}
Let $\tau_\ell[D_Y]\colon O(\DET_\R D_X)\to\{\pm1\}$ be the composition of the inverse of \eqref{bio4eq18} with \eqref{bio4eq19}. In other words, for all $\de\ge 0$ we have
\[
 \tau_\ell[D_Y]\bigl(\R_{>0}\stab_{\de}(D_X)\det f_\de\bigr)=(-1)^{\dim_\R\Ker D_{Y,B}+\dim_\R V_{[0,\de]}^+(D_X)},
\]
where $B=B_\APS(-\de)\op\Ga(f_\de)$ and $f_\de\colon V_{[0,\de]}^+(D_X)\to V_{[0,\de]}^-(D_X)$ is orthogonal. (If $X=\emptyset,$ then $O(\DET_\R D_X)=\{\pm1\}$ and $\tau_\ell[D_Y](\pm1)=\pm(-1)^{\dim_\R\Ker D_Y}.$)
\end{dfn}

Now let $\cD_X,$ $\cD_Y$ be families of adapted elliptic differential operators as in Theorem~\ref{bio3thm2}. Then \eqref{bio4eq18} and \eqref{bio4eq19} are continuous and hence $\tau_\ell[\cD_Y]=\coprod_{t\in T}\tau_\ell[D_Y(t)]$ is a continuous isomorphism of principal $\Z_2$-bundles over $T.$

Each of the statements of Theorem~\ref{bio3thm2}(a)--(e) can be verified point-wise, so we can fix $t\in T$ and drop it from the notation; (b) has already been proven.

\subsubsection*{Proof of \textup{\ref{bio3thm2a}}}

For simplicity, assume $\6 Y=\emptyset,$ so $\6 Y'=X_+\amalg X_-.$ The general case requires straight-forward bookkeeping for the other boundary components and will be left to the reader. Continue in the notation of \S\ref{bio411}. We have
\ea*
 \jmath_{X_-}^*(D_{Y'})
 &=(-\si)\left(\frac{\6}{\6 t}-A_{X_0}\right) &&\text{by \eqref{bio4eq8}}\\
 &=
 \begin{pmatrix}
 	-\frac{\6}{\6 t} & D_{X_0}^*\\
 	-D_{X_0} & \frac{\6}{\6 t}
 \end{pmatrix}
 && \text{by \eqref{bio4eq13}}\\
 &\cong
 \begin{pmatrix}
 	\frac{\6}{\6 t} & -D_{X_0}^*\\
 	D_{X_0} & -\frac{\6}{\6 t}
 \end{pmatrix}
 && \text{conjugate by $\begin{pmatrix}0 & 1\\ 1& 0\end{pmatrix}$}\\
 &=\Cyl(-D_{X_0}^*) && \text{by \eqref{bio2eq2}}
\ea*
and $\jmath_{X_+}^*(D_{Y'})=\Cyl(D_{X_0}).$ Hence $D_{X_+}= D_{X_0},$ $D_{X_-}\cong -D_{X_0}^*,$ and $D_{X_+\amalg X_-}\cong D_{X_+}\op D_{X_-}=D_{X_0}\op -D_{X_0}^*$ are isomorphic $\ell$-adapted operators. Let $\de\ge 0.$ We can identify $V^\pm_{[0,\de]}(D_{X_0})\cong V^\pm_{[0,\de]}(D_{X_+})\cong V^\mp_{[0,\de]}(D_{X_-})$ and $V^\pm_{[0,\de]}(D_{X_+}\op D_{X_-})\cong V_{[0,\de]}^\pm(D_{X_0})\op V_{[0,\de]}^\mp(D_{X_0}).$ Let $n_\pm=\dim_\R V^\pm_{[0,\de]}(D_{X_0}).$

Let $\om_\de^\pm\in\det V_{[0,\de]}^\pm(D_{X_0})$ be volume forms and $(\om_\de^\pm)^*\in\det V_{[0,\de]}^\pm(D_{X_0})^*$ be the dual volume forms as in \eqref{bioAeq5}. The diagram \eqref{bio3eq5} involves the isomorphisms
\[
\begin{tikzcd}[column sep=-3mm]
\begin{array}{l}
\DET_\R D_{X_+}\ot \DET_\R D_{X_-}\\
\cong
\det V^+_{[0,\de]}(D_{X_+})
\ot\det V^-_{[0,\de]}(D_{X_+})^*\\
\ot\det V^+_{[0,\de]}(D_{X_-})
\ot\det V^-_{[0,\de]}(D_{X_-})^*\\
\ni\om_\de^+\ot(\om_\de^-)^*\ot\om_\de^-\ot(\om_\de^+)^*
\end{array}
\arrow[rd,"{\eqref{bioAeq10},\eqref{bioAeq11}}"]
\arrow[dd,"{\eqref{bioAeq12}}"]
&
\\
& \begin{array}{l}
\DET_\R D_{X_0}\ot(\DET_\R D_{X_0})^*
\arrow[d,"\an{\,,\,}"]\\
\cong
\det V^+_{[0,\de]}(D_{X_0})
\ot\det V^-_{[0,\de]}(D_{X_0})^*\\
\ot\det V^-_{[0,\de]}(D_{X_0})^{**}
\ot\det V^+_{[0,\de]}(D_{X_0})^*\\
\ni(-1)^{n_+}\om_\de^+\ot(\om_\de^-)^*\\\quad\ot\ev(\om_\de^-)\ot(\om_\de^+)^*
\end{array}
\\
\begin{array}{l}
\DET_\R (D_{X_+}\op D_{X_-})\\
\cong
\det V^+_{[0,\de]}(D_{X_+}\op D_{X_-})\\
\ot\det V^-_{[0,\de]}(D_{X_+}\op D_{X_-})^*\\
\ni(-1)^{n_-(n_-+n_+)}\om_\de^+\wedge\om_\de^-\\
\quad\ot(\om_\de^+)^*\wedge (\om_\de^-)^*
\end{array}
 & \R\ni(-1)^{n_+},
\end{tikzcd}
\]
which map the elements in the indicated way, where the sign in the horizontal map is due to \eqref{bioAeq11} applied to $D_-=-D_0^*.$ % i.e., to the adjoint $D_0^*$ in place of $D_0$

To evaluate the bottom horizontal map in \eqref{bio3eq5}, according to Definition~\ref{bio4def5}, we should write $(-1)^{n_-(n_-+n_+)}\om_\de^+\wedge\om_\de^-\ot(\om_\de^+)^*\wedge (\om_\de^-)^*$ in the form $\det f_\de$ for an orthogonal isomorphism $f_\de\colon V^+_{[0,\de]}(D_{X_+}\op D_{X_-})\to V^-_{[0,\de]}(D_{X_+}\op D_{X_-}).$ Using the identification $V^\pm_{[0,\de]}(D_{X_+}\op D_{X_-})\cong V_{[0,\de]}^\pm(D_{X_0})\op V_{[0,\de]}^\mp(D_{X_0})$ define $f_\de(v^+\op v^-)=v^-\op v^+.$ By \eqref{bioAeq6} we then have $\det(f_\de)=\om_\de^+\wedge\om_\de^-\ot(\om_\de^-)^*\wedge(\om_\de^+)^*=(-1)^{n_-n_+}\om_\de^+\wedge\om_\de^-\ot(\om_\de^+)^*\wedge (\om_\de^-)^*.$
%More details about the calculation of \det(f_\de):
%\ea*
% \det f_\de &\overset{\smash{\eqref{bioAeq6}}}{=}
%(v_1^-,0)\wedge\cdots\wedge(v_{n_-}^-,0)\wedge (0,v_1^+)\wedge\cdots\wedge (0,v_{n_+}^+)\ot f_\de(0,v_{n_+}^+)^*\wedge\cdots\wedge f_\de(0,v_1^+)^*\wedge f_\de(v_{n_-}^-,0)^*\wedge\cdots\wedge f_\de(v_1^-,0)^*\\
%&=v_1^-\wedge\cdots\wedge v_{n_-}^-\wedge v_1^+\wedge\cdots\wedge v_{n_+}^+\ot (v_{n_+}^+)^*\wedge\cdots\wedge (v_1^+)^*\wedge (v_{n_-}^-)^*\wedge\cdots\wedge (v_1^-)^*
%\ea*
Hence $\tau_\ell[D_{Y'}]$ maps $(-1)^{n_-(n_-+n_+)}\om_\de^+\wedge\om_\de^-\ot(\om_\de^+)^*\wedge (\om_\de^-)^*$ to $(-1)^{n_++n_-+n_-^2+\dim_\R\Ker D_{Y',B'}}=(-1)^{n_++\dim_\R\Ker D_{Y',B'}},$ where the boundary condition $B'=B_\APS(-\de)\op\Ga(f_\de)$ can by \eqref{bio4eq9} be deformed into the transmission condition, so $\dim_\R\Ker D_{Y',B'}=\dim_\R\Ker D_Y.$ Putting this calculation into the above diagram completes the proof of (a).
\hfill\qedsymbol

\subsubsection*{Proofs of \textup{\ref{bio3thm2c}}, \textup{\ref{bio3thm2d}}, \textup{\ref{bio3thm2e}}}

The proofs of parts (c) and (d) are straight-forward consequences of the fact that by \eqref{bioAeq12} determinant line bundles are compatible with direct sums and that the mod-two index is additive under direct sums.

Part (e) is obvious from Definition~\ref{bio4def5}.

\subsubsection{\texorpdfstring{Proof of Theorem~\textup{\ref{bio3thm2}} in the case $\ell\equiv 1$}{Proof of Theorem 3.2 in the case ℓ≡1}}
\label{bio432}

According to \eqref{bio2eq3}, $E_0=E_1,$ $D_X\colon\Ga^\iy(E_0)\to\Ga^\iy(E_1)$ is a skew-adjoint $\R$-linear differential operator, $F_0|_{\6 Y}=E_0\ot\C,$ $F_1|_{\6 Y}=\ol{F_0|_{\6 Y}},$ $\si$ is the complex conjugation, and (here, $(D_X)_\C$ denotes the complexification of $D_X$)
\e
\label{bio4eq20}
 D_Y
 =
 \si\left(
 \frac{\6}{\6 t}+A_X
 \right),\qquad
 A_X=i(D_X)_\C\overset{\text{over $\R$}}{=}\begin{pmatrix}
	0&D_X^*\\
	D_X & 0
\end{pmatrix}.
\e
% Over the real numbers, $\si$ is a $2\t 2$-matrix which has the same expression as in \eqref{bio4eq13}

Hence $W_\de(D_X)$ is a complex vector space, $\be_\de$ is a non-degenerate symmetric $\C$-bilinear form, and $\si$ is a `positive' real structure on $W_\de(D_X)$ if $\ell\equiv 1.$

\subsubsection*{Complex symmetric bilinear forms}

Let $\be$ be a non-degenerate symmetric $\C$-bilinear form on a finite-dimensional complex vector space $W.$ A complex subspace $L\subset W$ is \emph{Lagrangian} if $L^{\perp_\be}=L.$ Let $\Lag_\C W$ be the set of complex Lagrangian subspaces, where $\Lag_\C W=\{\pm1\}$ if $W=\{0\}.$ Hence $\Lag_\C W$ is nonempty if and only if $\dim_\C W$ is even.

A \emph{real structure} is a $\C$-linear map $\si\colon W\to\ol W$ such that $\ol\si\circ\si=\id_W.$ We call $\si$ \emph{positive} if $h(w_1,w_2)=\be(w_1,\si(w_2))$ is a Hermitian metric on $W.$ In this case $V=\Eig_{+1}(\si)$ is a real vector subspace of $W$ with Euclidean metric $g=h|_V$;
% $h$ takes only real values on $V$
we can identify $W\cong V\ot_\R\C,$ $\dim_\C W=\dim_\R V,$ and $\be$ with the complexification of $g.$ Moreover, there is a bijection between $\Lag_\C W$ and the set $\mathcal{J}(V)=\{J\in\Hom_\R(V,V)\mid J^*=J^{-1}=J\}$ of orthogonal complex structures on $V,$ where the Lagrangian subspace corresponding to $J$ is
$
 L(J)=\left\{\,
 v-iJ(v)
 \;\middle|\;
 v\in V
 \,\right\}.%=\Ga(-J)
$
Conversely, every complex Lagrangian subspace has this form.
By \cite{Mein13}*{Thm.~1.4} the set $\mathcal{J}(V)$ is either empty or a homogeneous space, isomorphic to $\O(V)/\U(V,J).$ In particular, it has two path components.

The intersection of Lagrangian subspaces can be expressed in terms of the complex structures as $L(J_1)\cap L(J_2)=\Eig_{-1}(J_1J_2).$ From this it is easy to see that if $L\in\Lag_\C W$ and $J_1, J_2\in\cJ(V),$ then
\[
 (-1)^{\dim_\C L\cap L(J_1)}=\det(J_1J_2)(-1)^{\dim_\C L\cap L(J_2)}.
\]
As $\det(J_1)\in\{\pm1\}$ we see in particular that the left hand side only depends on the path-component of $J_1.$

% Proof of \eqref{intersectCPLX}:
%\begin{lem}
%Let\/ $V$ be a Euclidean vector space and equip\/ $V\ot \C$ with the symmetric\/ $\C$-bilinear form\/ $\be(v_1\ot\la_1, v_2\ot\la_2)=\an{v_1,v_2}z_1z_2,$ where\/ $v_1, v_2\in V,$\/ $\la_1, \la_2\in\C.$ Let\/ $L\subset V$ be a Lagrangian subspace and let\/ $J_1,$ $J_2$ be orthogonal complex structures that induce opposite orientations on\/ $V.$ Then
%\[
% (-1)^{\dim_\R L\cap L_{J_1}}+(-1)^{\dim_\R L\cap L_{J_2}}=0.
%\]
%Moreover,\/ $(-1)^{\dim_\R L\cap L_J}$ is constant under deformations of\/ $J.$
%\end{lem}
%
%\begin{proof}
%Every Lagrangian subspace has the form $L=L_I=\{\,v-iI(v)\mid v\in V\,\}$ for an orthogonal complex structure $I.$ Moreover, we can identify $L\cap L_{J_1}=\Eig_{-1}(IJ_1)$ since $v-iI(v)=w-iJ(w)$ implies $v=w$ and $I(v)=J(w).$ Hence $(-1)^{\dim(L\cap L_{J_1})}=\det(IJ_1),$ and this expression is clearly constant under deformations of $J_1.$ Similarly, $(-1)^{\dim(L\cap L_{J_2})}=\det(IJ_2).$ Since $J_1$ and $J_2$ induce opposite orientations on $V,$ we have $\det(J_1)=-\det(J_2).$
%\end{proof}

\subsubsection*{Proof of bordism invariance}

Proposition~\ref{bio4prop4} implies that $C_\de(D_Y)$ is a complex Lagrangian subspace of $W_\de(D_X),$ so $V_{[0,\de]}(D_X)$ admits a complex structure for all $\de\ge 0.$ In particular, $\dim_\R V_{[0,\de]}(D_X)$ is even. Setting $\de=0,$ we find that $\dim_\R\Ker D_X$ is even, thus proving \eqref{bio3eq3}.\hfill\qedsymbol

\subsubsection*{Complex Lagrangians and orientations}

We already know that $\Lag_\C W_\de(D_X)$ is a nonempty space and thus has two path components. We now identify the $\Z_2$-torsor $\pi_0\Lag_\C W_\de(D_X)$ of path components with the $\Z_2$-torsor of orientations with the $\Z_2$-torsor of orientations $O(\PF_\R D_X)$ of the Pfaffian line.

The structure of \eqref{bio4eq20} and \eqref{bio4eq13} is the same, so the properties \eqref{bio4eq15}--\eqref{bio4eq16} carry over unchanged.
% Observe also that $\Eig_{\{-\la,\la\}}(A_X)=\Eig_{\la^2}(D_X^*D_X)\op\Eig_{\la^2}(D_XD_X^*)$ has a complex structure that exchanges $\Eig_{\pm\la}(A_X)$ for $\la>0.$
Every complex Lagrangian subspace $L_\de\subset W_\de(D_X)$ has the form $L_\de=L(J_\de)$ for a complex structure $J_\de\in\cJ(V_{[0,\de]}(D_X)).$ Up to deformation, there are two possibilities, distinguished by the induced orientation $\R_{>0}\om(J_\de)\in O(\det V_{[0,\de]}(D_X))$ as in \eqref{bioAeq7}. Inserting also a sign, this construction defines for all $\de\ge0$ an isomorphism
\e
\label{bio4eq21}
\begin{aligned}
 \pi_0\Lag_\C W_\de(D_X)&\longra O(\det V_{[0,\de]}(D_X)),\\
 [L_\de]=[L(J_\de)]&\longmapsto(-1)^{\dim_\C V_{[0,\de]}(D_X)}\R_{>0}\om(J_\de).
\end{aligned}
\e
If $0\le\de<\varepsilon,$ these fit into a commutative diagram (using notation from \S\ref{bioA3})
\[
\begin{tikzcd}
	\pi_0\Lag_\C W_\de(D_X)\arrow[dd,"\eqref{bio4eq12}"]\rar & O(\det V_{[0,\de]}(D_X))\arrow[rd,"\stab_{\de}^\skew(D_X)"]\arrow[dd,"\stab_{\varepsilon,\de}^\skew(D_X)"]\\
	&& O(\PF_\R D_X),\\
	\pi_0\Lag_\C W_\varepsilon(D_X)\rar & O(\det V_{[0,\varepsilon]}(D_X))\arrow[ru,"\stab_{\varepsilon}^\skew(D_X)"']
\end{tikzcd}
\]
%\begin{proof}
%Observe that \eqref{bio4eq15} is now both orthogonal and skew-adjoint, hence defines a complex structure $J_\la$ on $\Eig_{\la^2}(D_X^*D_X)=\Eig_{\la^2}(D_XD_X^*).$ Assume $0\le\de\le\varepsilon,$ let $J_\de$ be a complex structure that induces the orientation $\om_\de.$ Extend $J_\de$ to a complex structure $J_\varepsilon$ on $\Eig_{[0,\varepsilon^2]}(D_X^*D_X)$ by $\bigoplus_{\la\in(\de,\varepsilon]} J_\la$ and let $\om_\varepsilon$ be the orientation induced by $J_\varepsilon.$ According to the sign convention for the Pfaffian line bundle,
%\[
% \stab_\de(\om_\de)=(-1)^{\dim_\R\Eig_{(\de^2,\varepsilon^2]}(D_X^*D_X)}\stab_\varepsilon(\om_\varepsilon).
%\]
%The calculation is now exactly as in Lemma~\ref{lem_global_det}, since $L(J_\la)=\Ga(-\la^{-1} D_X).$
%\end{proof}
which justifies the sign in \eqref{bio4eq21}. Hence there is an isomorphism
\e
\label{bio4eq22}
  \lim\limits_{\longra}\pi_0\Lag_\C W_\de(D_X)\overset{\smash\cong}{\longra} O(\PF_\R D_X).
\e

\subsubsection*{Construction of \textup{\eqref{bio3eq4}}}

Every complex Lagrangian subspace $L_\de\subset W_\de(D_X)$ determines a self-adjoint nearly APS boundary condition $B=B_\APS(-\de)\op L_\de$ for $D_Y.$ The mod-two index of a complex skew-adjoint Fredholm operator is locally constant and thus factors over the set of path components as
\[
 \pi_0\Lag_\C W_\de(D_X)\xrightarrow{\smash{i_{Y,\de}^\C}}\Z_2,
 \quad [L_\de]\longmapsto(-1)^{\dim_\C\Ker D_{Y,B_\APS(-\de)\op L_\de}}.
\]
(If $X=\emptyset,$ then $i_{Y,\de}^\C\colon\{\pm1\}\to\{\pm1\}$ is multiplication by the complex mod-two index $(-1)^{\dim_\C D_Y}.$) For $0\le\de<\varepsilon$ let $L_\varepsilon$ be the image of $L_\de$ under \eqref{bio4eq12}. From \eqref{bio4eq16} we find $B_\APS(-\de)\op L_\de=B_\APS(-\varepsilon)\op L_\varepsilon,$ so $i_{Y,\de}^\C[L_\de]=i_{Y,\varepsilon}^\C[L_\varepsilon].$ Therefore $i_{Y,\de}^\C$ factors through the inverse limit to an isomorphism
\e
\label{bio4eq23}
 \lim\limits_{\longra}\pi_0\Lag_\C W_\de(D_X)\overset{\smash\cong}{\longra}\{\pm1\}.
\e

\begin{dfn}
\label{bio4def6}
Let $\tau_\ell[D_Y]\colon O(\PF_\R D_X)\to\{\pm1\}$ be the composition of the inverse of \eqref{bio4eq22} with \eqref{bio4eq23}. In other words, for all $\de\ge 0$ we have
\[
 \tau_\ell[D_Y]\bigl(\R_{>0}\stab_{\de}^\skew(D_X)O(J_\de)\bigr)=(-1)^{\dim_\C\Ker D_{Y,B}+\dim_\C V_{[0,\de]}(D_X)},
\]
where $B=B_\APS(-\de)\op L(J_\de)$ and $J_\de$ is a complex structure on $V_{[0,\de]}(D_X).$ (If $X=\emptyset,$ then $O(\PF_\R D_X)=\{\pm1\}$ and $\tau_\ell[D_Y](\pm1)=\pm(-1)^{\dim_\C\Ker D_Y}.$)
\end{dfn}

Now let $\cD_X,$ $\cD_Y$ be families of adapted elliptic differential operators as in Theorem~\ref{bio3thm2}. Then \eqref{bio4eq22} and \eqref{bio4eq23} are continuous and hence $\tau_\ell[\cD_Y]=\coprod_{t\in T}\tau_\ell[D_Y(t)]$ is a continuous isomorphism of principal $\Z_2$-bundles over $T.$

Again, Theorem~\ref{bio3thm2}(a)--(e) can be verified point-wise. 

\subsubsection*{Proof of \textup{\ref{bio3thm2a}}}

For simplicity, assume $\6 Y=\emptyset$ again, so $\6 Y'=X_+\amalg X_-.$ Continue in the notation of \S\ref{bio411}. We have
\ea*
 \jmath_{X_-}^*(D_{Y'})
 &=(-\si)\left(\frac{\6}{\6 t}-A_{X_0}\right) &&\text{by \eqref{bio4eq8}}\\
 &=
 (-\si)\left(\frac{\6}{\6 t}-i D_{X_0}\right)
 && \text{by \eqref{bio4eq20}}\\
 &\cong
 \left(\frac{\6}{\6 t}-iD_{X_0}\right)\si
 && \text{conjugate by $i\si$}\\
 &=\si\left(\frac{\6}{\6 t}+iD_{X_0}\right) && \text{$\si$ anti-linear}\\
 &=\Cyl(D_{X_0}) && \text{by \eqref{bio2eq2}}
\ea*
and $\jmath_{X_-}^*(D_{Y'})=\Cyl(D_{X_0}).$ Hence $D_{X_\pm}\cong D_{X_0}$ and $D_{X_+\amalg X_-}\cong D_{X_0}\op D_{X_0}$ are isomorphic $\ell$-adapted operators. Let $\de\ge0.$ We can identify $V_{[0,\de]}(D_{X_0})\cong V_{[0,\de]}(D_{X_\pm})$ and $V_{[0,\de]}(D_{X_+}\op D_{X_-})\cong V_{[0,\de]}(D_{X_0})\op V_{[0,\de]}(D_{X_0}).$

Let $\om_\de\in \det V_{[0,\de]}(D_{X_0})$ be a volume form and $n=\dim_\R V_{[0,\de]}(D_{X_0}).$ The diagram \eqref{bio3eq5} involves the isomorphisms
\[
\begin{tikzcd}
	\begin{array}{l}
		\PF_\R(D_{X_+})\ot_\R\PF_\R(D_{X_-})\\
		\ni\om_\de\ot\om_\de
	\end{array}
	\dar{\eqref{bioAeq14}}\rar{\eqref{bioAeq13}} &
	\begin{array}{l}
	\PF_\R(D_{X_0})\ot_\R\PF_\R(D_{X_0})^*\\
	\ni (-1)^{n(n-1)/2}\om_\de\ot\om_\de^*
	\end{array}
	\dar{\an{\,,\,}}\\
	\PF_\R(D_{X_+}\op D_{X_-})\ni(\om_\de,0)\wedge(0,\om_\de) & \R\ni(-1)^{n(n-1)/2}.
\end{tikzcd}
\]

To evaluate the bottom horizontal map in \eqref{bio3eq5}, according to Definition~\ref{bio4def6}, we should write the orientation $\R_{>0}(\om_\de,0)\wedge(0,\om_\de)$ in the form $O(J_\de)$ for an orthogonal complex structure $J_\de$ on $V_{[0,\de]}(D_{X_+}\op D_{X_-}).$ Using the identification $V_{[0,\de]}(D_{X_+}\op D_{X_-})\cong V_{[0,\de]}(D_{X_0})\op V_{[0,\de]}(D_{X_0})$ define $J_\de(v,w)=(-w,v).$ By \eqref{bioAeq7} we then have $O(J_\de)=(-1)^{n(n-1)/2}(\om_\de,0)\wedge(0,\om_\de).$ Hence $\tau_\ell[D_{Y'}]$ from Definition~\ref{bio4def6} maps $(\om_\de,0)\wedge(0,\om_\de)$ to $(-1)^{\dim_\C\Ker D_{Y',B'}+n(n-1)/2}$ (note here that $\dim_\C V_{[0,\de]}(D_{X_+\amalg X_-})=2\dim_\C V_{[0,\de]}(D_{X_0})$ is always even), where $B'$ can be deformed to a transmission condition by \eqref{bio4eq9}, so $\dim_\C\Ker D_{Y',B'}=\dim_\C D_Y.$ Now put this calculation into the above diagram to complete the proof of (a).
\hfill\qedsymbol

\subsubsection*{Proofs of \textup{\ref{bio3thm2c}}, \textup{\ref{bio3thm2d}}, \textup{\ref{bio3thm2e}}}

The proofs of parts (c) and (d) are straight-forward consequences of the fact that by \eqref{bioAeq14} Pfaffian line bundles are compatible with direct sums and that the complex mod-two index is additive under direct sums.

Part (e) is obvious from Definition~\ref{bio4def6}.

\subsubsection{\texorpdfstring{Proof of Theorem~\textup{\ref{bio3thm2}} in the case $\ell\equiv 2$}{Proof of Theorem 3.2 in the case ℓ≡2}}
\label{bio433}

By \eqref{bio2eq4}, $E_0=\ol E_1,$ $F_0=F_1=E_0\op\ol{E_0},$ and over the collar
\[
 D_Y
 =
 \begin{pmatrix}
 	-i\frac{\6}{\6 t} & -\ol{D_X}\\
 	D_X & i\frac{\6}{\6 t}
 \end{pmatrix},
 \quad
 A_X
 =
 \begin{pmatrix}
 	0 & i\ol{D_X}\\
 	iD_X & 0
 \end{pmatrix}.
 % Note that $A_X$ is self-adjoint
\]

Hence $\be_\de$ is a sesquilinear form and $W_\de(D_X)$ is a quaternionification.

\subsubsection*{Quaternionic Lagrangians}

Let $W$ be a \emph{quaternionification}, a quaternionic vector space of the form $W=\H\ot_\C V\cong V\op\ol V$ for a complex vector space $V.$ Let $\an{,}_V$ be a Hermitian metric on $V$ and define a sesquilinear form on $W$ by
$\be\bigl(
 (v_1,w_1),(v_2,w_2)
 \bigr)
 =
 i\bigl(\an{v_1,v_2}_V-\ol{\an{w_1,w_2}_V}\bigr)$ for $v_1, v_2\in V$ and $w_1,w_2\in\ol V.$
 
A quaternionic subspace $L\subset W$ is \emph{Lagrangian} if $L^{\perp_\be}=L.$ Let $\Lag_\H W$ be the set of quaternionic Lagrangians, where $\Lag_\H W=\{\pm1\}$ if $W=\{0\}.$ There is a bijection the space $\{J\in\Hom_\C(V,\ol V)\mid \ol{J}\circ J=-\id_V\}$ of orthogonal quaternionic structures on $V$ and $\Lag_\H W,$ where the Lagrangian associated to $J$ is $L=\Ga(J)\subset V\op\ol V.$ These sets are nonempty if and only if $\dim_\C V$ is even.

\subsubsection*{Proof of bordism invariance}

Put $\de=0.$ The space $W_\de(D_X)=\Ker A_X=\Ker D_X\op\ol{\Ker D_X}$ is a quaternionification of $V_\de(D_X)=\Ker D_X$ and is equipped with the sesquilinear form $\be_\de.$ By Proposition~\ref{bio4prop4}, $L_{Y,\de}$ is a Lagrangian subspace of $W_\de(D_X),$ which implies $\ind_\ell D_X=\dim_\C\Ker D_X\equiv 0\bmod{2}.$\hfill\qedsymbol
\medskip

All the remaining assertions in Theorem~\ref{bio3thm2} are empty.

\subsubsection{\texorpdfstring{Proof of Theorem~\textup{\ref{bio3thm2}} in the cases $\ell\equiv 3,7$}{Proof of Theorem 3.2 in the cases ℓ≡3,7}}
\label{bio434}

Let $\K=\H$ if $\ell\equiv 3$ and $\K=\R$ if $\ell\equiv 7.$ By \eqref{bio2eq5}, $E_0=E_1,$ $D_X\colon\Ga^\iy(E_0)\to\Ga^\iy(E_1)$ is $\K$-linear self-adjoint, $F_0|_X=F_1|_X=E_0,$ and over the collar
\[
 D_Y=\frac{\6}{\6 t} + D_X
\]

Since we always have $\ind_\ell D_X=0,$ we only need to define \eqref{bio3eq4}.

\begin{dfn}
\label{bio4def7}
Let $\tau_\ell[D_Y]\colon\SP_\K D_X\to \Z,$	$[m,\de]\mapsto m+\ind D_{Y,B_\APS(\de)},$ where $\de\in\R\setminus\Spec(D_X)$ and $m\in\Z.$ This map is well-defined by \eqref{bio4eq7}
\end{dfn}

Let $\cD_X,$ $\cD_Y$ be families of adapted elliptic differential operators as in Theorem~\ref{bio3thm2}. Since $\ind D_{Y,B_\APS(\de)}$ is continuous over $T_\de$ for all $\de\ge0,$ the map $\tau_\ell[\cD_Y]=\coprod_{t\in T}\tau_\ell[D_Y]$ is a continuous isomorphism of principal $\Z$-bundles.

Again, Theorem~\ref{bio3thm2}(a)--(e) can be verified point-wise.

\subsubsection*{Proof of \textup{\ref{bio3thm2a}}}

Suppose again for simplicity that $\6 Y=\emptyset$ and so $\6 Y'=X_+\amalg X_-.$ In view of \eqref{bio2eq5}, in \eqref{bio4eq8} we have $\jmath_{X_\pm}^*(D_Y)=\Cyl(D_{X_\pm})$ for $D_{X_\pm}=\pm D_{X_0}.$ Now \eqref{bio3eq5} involves
\[
\begin{tikzcd}
	\begin{array}{l}
		\SP_\K D_{X_+}\ot_\Z\SP_\K D_{X_-}\\
		\ni[m,\de]\ot[n,\de]
	\end{array}
\dar{\eqref{bioAeq15}}\rar{\eqref{bioAeq16}} &
\begin{array}{l}
\SP_\K D_{X_0}\ot_\Z(\SP_\K D_{X_0})^*\\
\ni[m,\de]\ot[n,-\de]^*
\end{array}
\dar{\an{\,,\,}+\ind D_Y}\\
	\SP_\K(D_{X_+}\op D_{X_-})
	\ni[m+n,\de]
 & \Z\ni m+n+\ind D_Y.
\end{tikzcd}
\]
Hence \eqref{bio4eq9} and the deformation invariance of the index of $\K$-linear Fredholm operators imply $m+n+\ind D_{Y',B_\APS(-\de)}=m+n+\ind D_Y.$\hfill\qedsymbol

\subsubsection*{Proofs of \textup{\ref{bio3thm2c}}, \textup{\ref{bio3thm2d}}, \textup{\ref{bio3thm2e}}}

The proofs of parts (c) and (d) are straight-forward consequences of the fact that by \eqref{bioAeq15} spectral covering is compatible with direct sums and that the index is additive under direct sums.

Part (e) is obvious from Definition~\ref{bio4def7}.

\subsubsection{\texorpdfstring{Proof of Theorem~\textup{\ref{bio3thm2}} in the case $\ell\equiv 4$}{Proof of Theorem 3.2 in the case ℓ≡4}}
\label{bio435}

By \eqref{bio2eq6}, $D_X\colon\Ga^\iy(E_0)\to\Ga^\iy(E_1)$ is an $\H$-linear differential operator, $F_0|_X=E_0\op E_1^\diamond,$ $F_1|_X=E_0^\diamond\op E_1,$ and
\[
 D_Y=
 \begin{pmatrix}
 i & 0\\
 0 & -i
 \end{pmatrix}
 \left[
 \frac{\6}{\6 t}
 +
 \begin{pmatrix}
 0&(iD_X)^*\\
 iD_X & 0
 \end{pmatrix}
 \right]
 =
 \begin{pmatrix}
 	i\frac{\6}{\6 t} & D_X^*\\
 	D_X & -i\frac{\6}{\6 t}
 \end{pmatrix}.
\]
In this case, $\Im(\be_0)=g_{\Ker D_X}-g_{\Ker D_X^*}$ is a symmetric $\R$-bilinear form as in \S\ref{bio431} of inertia index $\dim_\R\Ker D_X-\dim_\R\Ker D_X^*.$ Put $\de=0.$ By Proposition~\ref{bio4prop4} $C_\de(D_Y)$ is a Lagrangian subspace of $W_\de(D_X)$ and hence as in \S\ref{bio431}
\[
 \ind_\ell D_X=(\dim_\R\Ker D_X-\dim_\R\Ker D_X^*)/4=0.
\]

\subsubsection{\texorpdfstring{Proof of Theorem~\textup{\ref{bio3thm2}} in the cases $\ell\equiv 5,6$}{Proof of Theorem 3.2 in the cases ℓ≡5,6}}
\label{bio436}

All assertions are empty.

\appendix

\section{Determinant, Pfaffian, and spectral bundles}
\label{bioA}

This appendix recalls the construction of the determinant line bundle, Pfaffian line bundle, and spectral covering, and fixes several sign conventions for these.

\addsubsection{Sign convention}
\label{bioA1}

Already the construction of the determinant and Pfaffian line bundles requires a sign convention. We use the supersymmetric sign convention as in Upmeier~\cite{Upme}*{\S 3}, which determines the signs in various isomorphisms according to principle that passing two vectors past each other causes a sign.

Let $\K$ be a field. A \emph{graded line} is a pair $((-1)^n,L)$ where $n\in\Z_2$ and $L$ is a $1$-dimensional vector space over $\K.$ The \emph{determinant} of an $n$-dimensional vector space $V$ over $\K$ is the graded line $\det_\K V=((-1)^n,\La^n_\K V).$ The non-zero elements of $\det_\K V$ thus all have the degree $(-1)^n$ and are represented by \emph{volume forms} $\om=v_1\wedge\cdots\wedge v_n$ where $v_1,\ldots, v_n$ is an \emph{ordered basis} of $V$ over $\K.$ In this section, all tensor products, dimensions, bases, and exterior powers will be taken over the (skew) field $\K,$ which we sometimes drop from the notation.

 Let $L_1,$ $L_2$ be graded lines in degrees $(-1)^{n_1},$ $(-1)^{n_2}.$ The tensor product $L_1\ot L_2$ is put into degree $(-1)^{n_1+n_2}.$ If $V=U\op W$ is an ordered direct sum decomposition into $\K$-linear subspaces, there is an isomorphism
\e
\label{bioAeq1}
 \det U\ot \det W\longra \det V
\e
corresponding to the convention that bases $u_1,\ldots,u_m$ of $U$ and $w_1,\ldots,w_p$ of $W$ determine the ordered basis $u_1,\ldots,u_m,w_1,\ldots,w_p$ of $V.$

The convention for the braid isomorphism is
\e
\label{bioAeq2}
 L_1 \ot L_2\overset{\smash\cong}{\longra}L_2\ot L_1,
\qquad
 \ell_1\ot \ell_2 \longmapsto (-1)^{n_1n_2} \ell_2 \ot \ell_1.
\e

Write $V^*=\Hom_\K(V,\K)$ for the dual vector space over $\K.$ Functionals are always evaluated on the left, $L\ot L^*\to\K,$ $\ell\ot\la\mapsto\an{\ell,\la},$ while evaluation on the right introduces a sign $(-1)^n.$ This implies the convention
\e
\label{bioAeq3}
	L_1^*\ot L_2^* \overset{\cong}{\longra} (L_2\ot L_1)^*,
	\qquad
	\an{x_2\ot x_1,\al_1\ot \al_2}\coloneqq
	\langle x_1, \al_1\rangle \langle x_2, \al_2\rangle.
\e
In the same way, the evaluation isomorphism $\an{\,,}\colon\det V\ot\det(V^*)\to\K$ is
\e
\label{bioAeq4}
\an{v_1\wedge\ldots\wedge v_n,\al_n\wedge\ldots\wedge\al_1}=\det[\an{v_j,\al_i}]_{i,j=1}^n,
\e
which differs by $(-1)^{n(n-1)/2}$ from the na\"ive convention.

We order the dual basis of $V^*$ as $v_n^*,\ldots,v_1^*,$ where $\an{v_i,v_j^*}=\de_{i,j},$ so the dual volume form in $\det(V^*)$ is given in accordance with \eqref{bioAeq4} by
\e
\label{bioAeq5}
 \om^*=v_n^*\wedge\cdots\wedge v_1^*.
\e

The determinant of a $\K$-linear isomorphism $f\colon V\to W$ is the element
\e
 \det(f)=v_1\wedge\cdots\wedge v_n\ot f(v_n)^*\wedge\cdots\wedge f(v_1)^*
 \in \det V\ot\det(W^*),
\label{bioAeq6}
\e
which is independent of the choice of basis $v_1,\ldots, v_n$ of $V.$

Let $\K=\R$ or $\C$ and let $J\colon V\to\ol V$ be an orthogonal, skew-adjoint $\K$-linear isomorphism satisfying $\ol J\circ J=-\id_V.$ Hence $J$ is a \emph{complex structure} on $V$ if $\K=\R$ and a \emph{quaternionic structure} on $V$ if $\K=\C.$ Moreover, $J$ induces a canonical volume form
\e
\label{bioAeq7}
\om(J)=v_1\wedge\cdots\wedge v_m\wedge J(v_m)\wedge\cdots\wedge J(v_1)\in\det V,
\e
where $v_1,\ldots, v_m$ is an arbitrary basis of $V$ over $\K.$

\addsubsection{Determinant line bundles}
\label{bioA2}

For Fredholm operators there is an interesting orientation problem for the `virtual difference' of the kernel and cokernel vector spaces, defined as follows.

If $D\colon H_1\to H_2$ is a Fredholm operator of Hilbert spaces over $\K=\R,$ or $\C,$ then $D^*D$ and $DD^*$ are self-adjoint and have real spectra which are discrete near zero with finite-dimensional eigenspaces. For simplicity, suppose that the entire spectrum is discrete (this holds for all applications in this paper; in general, one restricts to the `small' spectrum as in \cite{Upme}). Define
\ea
 \label{bioAeq8}
 V_\la^+(D)&=\Eig_{\la^2}(D^*D),
&V_\la^-(D)&=\Eig_{\la^2}(DD^*)
\ea
for $\la\ge 0$ and set $V_I^\pm(D)=\bigoplus_{\la\in I}V_\la^\pm(D)$ for $I\subset[0,\iy).$ These are finite-dimensional vector spaces and there is are orthogonal isomorphisms
\e
 \label{bioAeq9}
 J_\la(D)=\la^{-1}D|_{V_\la^+},\colon V_\la^+(D)\longra V_\la^-(D)
\e
for $\la\neq0$ and $J_I(D)=\bigoplus_{\la\in I} J_\la(D)\colon V_I^+(D)\to V_I^-(D)$ if $0\notin I.$

\begin{dfn}
\label{bioAdef1}
Let $D\colon H_1\to H_2$ be a $\K$-linear Fredholm operator of Hilbert spaces over $K=\R$ or $\C.$ For all $0\le\de<\varepsilon$ there is a direct sum decomposition $V_{[0,\varepsilon]}^\pm(D)=V_{[0,\de]}^\pm(D)\op V_{(\de,\varepsilon]}^\pm(D).$ The determinant $\det J_{(\de,\varepsilon]}(D)\in\det V_{(\de,\varepsilon]}^+(D)\ot\det V_{(\de,\varepsilon]}^-(D)^*$ defines an isomorphism $\stab_{\de,\varepsilon}$ by
\[
\begin{tikzcd}[column sep=huge]
\begin{array}{l}
\det V_{[0,\de]}^+(D)\\
\ot\det V_{[0,\de]}^-(D)^*
\end{array}
\rar{1\ot\det J_{(\de,\varepsilon]}(D)\ot 1}
\dar[dashed]{\stab_{\de,\varepsilon}(D)}
&
\begin{array}{l}
\det V_{[0,\de]}^+(D)\ot\det V_{(\de,\varepsilon]}^+(D)\\
\ot\det V_{(\de,\varepsilon]}^-(D)^*\ot\det V_{[0,\de]}^-(D)^*
\end{array}
\dar{\eqref{bioAeq3}}
\\
\det V_{[0,\varepsilon]}^+(D)\ot\det V_{[0,\varepsilon]}^-(D)^*
&
\begin{array}{l}
\det V_{[0,\de]}^+(D)\ot\det V_{(\de,\varepsilon]}^+(D)\\
\ot(\det V_{[0,\de]}^-(D)\ot\det V_{(\de,\varepsilon]}^-(D)^*)^*.
\end{array}
\lar{\eqref{bioAeq1}\ot\eqref{bioAeq1}^{*,-1}}
\end{tikzcd}
\]
Hence if $v_1^\pm,\ldots,v_{n_\pm}^\pm$ are ordered bases of $V_{[0,\de]}^\pm(D)$ and $w_1,\ldots,w_m$ is a basis of $V_{(\de,\varepsilon]}^+(D),$ then $\stab_{\de,\varepsilon}(D)(v_1^+\wedge\cdots\wedge v_{n_+}^+\ot (v_{n_-}^-)^*\wedge\cdots\wedge (v_1^-)^*)=v_1^+\wedge\cdots\wedge v_{n_+}^+\wedge w_1\wedge\cdots\wedge w_m\ot J_{(\de,\varepsilon]}(D)(w_m)^*\wedge\cdots\wedge J_{(\de,\varepsilon]}(D)(w_1)^*\wedge (v_{n_-}^-)^*\wedge\cdots\wedge (v_1^-)^*.$
%Alternatively:
%\e
%\begin{aligned}
% &\det_\K V_{[0,\de]}^+(D)\ot_\K\det_\K V_{[0,\de]}^-(D)^*
% \overset{\stab_{\de,\varepsilon}}{\longra}
% \det_\K V_{[0,\varepsilon]}^+(D)\ot_\K\det_\K V_{[0,\varepsilon]}^-(D)^*,\\
% &v\ot_\K\al
% \longmapsto
% v\wedge\om_{(\de,\varepsilon]}^+
% \ot_\K
% \bigl(\La J_{(\de,\varepsilon]}(D)\om_{(\de,\varepsilon]}^+\bigr)^*\wedge\al,
%\end{aligned}
%\label{DETstructureMaps}
%\e
%where $v,$ $\al,$ $\om_{(\de,\varepsilon]}^+$ are volume forms on $V_{[0,\de]}^+(D),$ $V_{[0,\de]}^-(D)^*,$ and $V_{(\de,\varepsilon]}^+(D).$ Observe that \eqref{DETstructureMaps} is independent of the choice of $\om_{(\de,\varepsilon]}^+.$

The maps $\stab_{\de,\varepsilon}(D)$ form a direct system and the \emph{determinant line} of $D$ is
\[
 \DET_\K D=\lim\limits_{\longra}\det_\K V_{[0,\de]}^+(D)\ot\det_\K V_{[0,\de]}^-(D)^*.
\]
Let $\stab_\de(D)\colon\det_\K V_{[0,\de]}^+(D)\ab\ot\det_\K V_{[0,\de]}^-(D)^*\to\DET_\K D$ be the canonical map.
\end{dfn}

%The structure maps can also be written:
%\[
%\begin{aligned}
%\begin{array}{c}\det\Eig_{[0,\varepsilon^2]} (D^*D)\\\ot\det\Eig_{[0,\varepsilon^2]} (DD^*)^*	
%\end{array}
%&\xrightarrow{\stab_{\varepsilon,\de}(D)}
%\begin{array}{c}
%\det\Eig_{[0,\de^2]}(D^*D)\\
%\ot\det\Eig_{[0,\de^2]}(DD^*)^*,
%\end{array}\\[1ex]
%(v\wedge w)\ot(\be\wedge \al)&\longmapsto\ban{\La(D/|D|)w,\be}v\ot\al,
%\end{aligned}
%\]
%where $\varepsilon>\de\ge0$ and $v\in\det\Eig_{[0,\de^2]}(D^* D),$ $w\in\det\Eig_{(\de^2,\varepsilon^2]}(D^* D),$ $\be\in\det\Eig_{(\de^2,\varepsilon^2]}(DD^*)^*,$ $\al\in\det\Eig_{[0,\de^2]}(DD^*)^*.$ Here $D/|D|$ is the operator whose restriction to $\Eig_{\la^2}(D^*D)$ is $w\mapsto|\la|^{-1}Dw.$ 
%For $\ze>0\le\de<\varepsilon$ one checks that
%\e
%\label{stabCocycle}
% \stab_{\de,\ze}(D)=\stab_{\varepsilon,\ze}(D)\circ\stab_{\de,\varepsilon}(D).
%\e

A \emph{continuous $T$-family} of $\K$-linear Fredholm operators is a continuous map $\cD\colon T\to B_\K(H_1,H_2),$ $t\mapsto D(t),$ into the Banach space of bounded operators with the operator norm such that $D(t)$ is a Fredholm operator for all $t\in T.$ Define open subsets $T_\de=\{t\in T\mid\de^2\notin\Spec(D(t)^*D(t)\}$ of $T$ for all $\de\ge0.$

\begin{dfn}
\label{bioAdef2}
Let $\cD=\{D(t)\}$ be a continuous $T$-family of $\K$-linear Fredholm operators. The \emph{determinant line bundle} is the disjoint union $$\DET_\K\cD=\coprod\nolimits_{t\in T}\DET_\K D(t)$$ with the unique topology such that all $\stab_\de(\cD)|_{T_\de}=\coprod_{t\in T_\de}\stab_{\de}(D(t))$ are isomorphisms of line bundles over $T_\de$ (the domain is naturally a line bundle).
%The definition of the topology is independent of the choice of cut-off by \eqref{stabCocycle}.
\end{dfn}

Our sign convention determines various isomorphisms. For the adjoint Fredholm operator $D^*\colon H_2\to H_1$ we have $V_{[0,\de]}^\pm(D^*)=V_{[0,\de]}^\mp(D)$ and there is a unique isomorphism such that
\begin{equation}
\begin{tikzcd}
	\DET(D^*)\rar[dashed]&\DET(D)^*\dar{\stab_{\de}(D)^*}\\
	\det V_{[0,\de]}^+(D^*)\ot \det V_{[0,\de]}^-(D^*)^*
	\uar{\stab_{\de}(D^*)}\rar{\eqref{bioAeq3}}
	&
\mathrlap{\bigl(\det V_{[0,\de]}^+(D)\ot \det V_{[0,\de]}^-(D)^*\bigr)^*}
\hspace{25ex}
\end{tikzcd}
\label{bioAeq10}
\end{equation}
commutes for all $\de\ge0$ (using the usual identification of a vector space with its double dual). For a family a Fredholm operators $\cD$ the lower horizontal map depends continuously on $t\in T_\de$ and the vertical maps are homeomorphisms there, so the resulting isomorphism $\DET(\cD^*)\to\DET(\cD)^*$ is continuous.

For the negative Fredholm operator we have $V_{[0,\de]}^\pm(-D)=V_{[0,\de]}^\pm(D)$ and there is a unique isomorphism such that
\begin{equation}
\begin{tikzcd}[column sep=15ex]
	\DET(-D)\rar[dashed] &\DET D\\
	\det V_{[0,\de]}^+(-D)\ot\det V_{[0,\de]}^-(-D)^*
\rar{(-1)^{\dim V_{[0,\de]}^-(D)}}
\uar{\stab_{\de}(-D)} 
&
\mathrlap{\det V_{[0,\de]}^+(D)\ot\det V_{[0,\de]}^-(D)^*}
\hspace{20ex}
	\uar{\stab_{\de}(D)}
\end{tikzcd}
\label{bioAeq11}
\end{equation}
commutes for all $\de\ge 0$ (the sign is needed because the vertical isomorphisms are different). The isomorphism \eqref{bioAeq11} is continuous in families.
% There is another possible convention where the isomorphism is changed by $(-1)^{\ind D}.$

For the direct sum of Fredholm operators $D,$ $\tilde D$ the eigenspace decomposes as $V_{[0,\de]}^\pm(D\op\tilde D)=V_{[0,\de]}^\pm(D)\op V_{[0,\de]}^\pm(\tilde D)$ and there is a unique isomorphism such that
\begin{equation}
\begin{tikzcd}[row sep=tiny,column sep=tiny]
	\DET D\ot\DET\tilde D\arrow[rr,dashed]
	& &
	\DET(D\op\tilde D)
	\\[3ex]
	\begin{array}{l}
	\det V_{[0,\de]}^+(D)
	\ot\det V_{[0,\de]}^-(D)^*\\
	\ot\det V_{[0,\de]}^+(\tilde D)
	\ot\det V_{[0,\de]}^-(\tilde D)^*
	\end{array}
	\uar{\stab_{\de}(D)\ot\stab_{\de}(\tilde D)}
	\arrow[dr,shorten >=5pt,"{\eqref{bioAeq2}}"]
	& &
	\begin{array}{l}
	\det V_{[0,\de]}^+(D\op \tilde D)\\
	\ot\det V_{[0,\de]}^-(D\op \tilde D)^*
	\end{array}
	\uar{\stab_{\de}(D\op\tilde D)}
	\\
	&
	\mathclap{\begin{array}{l}
	\det V_{[0,\de]}^+(D)\ot\det V_{[0,\de]}^+(\tilde D)\\
	\ot
	\det V_{[0,\de]}^-(\tilde D)^*\ot \det V_{[0,\de]}^-(D)^*
	\end{array}}
	\hspace{15ex}
	\arrow[ru,shorten <=-10pt,"{\eqref{bioAeq1},\eqref{bioAeq3}}"']
	&
\end{tikzcd}
\label{bioAeq12}
\end{equation}
commutes for all $\de\ge 0.$ This differs from na\"ive rearrangement by the sign
\[
(-1)^{\dim V_{[0,\de]}^-(D)(\dim V_{[0,\de]}^+(D)+\dim V_{[0,\de]}^+(\tilde D))}.
\]
Moreover, \eqref{bioAeq12} is continuous in families.

\addsubsection{Pfaffian line bundles}
\label{bioA3}

Let $\K=\R$ or $\C.$ A $\K$-linear Fredholm operator $D\colon H\to \ol H$ is \emph{skew-adjoint} if $D^*=-\ol D.$ For a continuous family of skew-adjoint Fredholm operators, the principal bundle $O(\DET_\R \cD)\to T$ admits a continuous global section by  \cite{Upme}*{\S 3.3}, so the orientation problem is uninteresting. Instead, there is now an interesting orientation problem for the kernel alone.
% which is only a well-defined problem if $D$ is skew-adjoint.

Write $V_\la(D)=V_\la^+(D)=\Eig_{\la^2}(D^*D)$ in \eqref{bioAeq8} so that $\ol{V_\la(D)}=V_\la^-(D)=\Eig_{\la^2}(DD^*),$ and let $V_I(D)=\bigoplus_{\la\in I} V_\la(D)$ for $I\subset[0,\iy).$ The isomorphism $J_\la(D)\colon V_\la(D)\to \ol{V_\la(D)}$ in \eqref{bioAeq9} is now orthogonal and skew-adjoint and hence determines by \eqref{bioAeq7} a volume form $\om(J_I(D))\in\det V_I(D)$ for all $I\subset(0,\iy).$

\begin{dfn}
\label{bioAdef3}
Let $D\colon H\to \ol H$ be a $\K$-linear skew-adjoint Fredholm operator. For all $0\le\de<\varepsilon$ there is a direct sum decomposition $V_{[0,\varepsilon]}(D)=V_{[0,\de]}(D)\op V_{(\de,\varepsilon]}(D)$ and hence isomorphisms
\[
 \stab_{\de,\varepsilon}^\skew(D)\colon
 \det V_{[0,\de]}(D)\longra\det V_{[0,\varepsilon]}(D),\quad
 v\longmapsto v\wedge\om(J_{(\de,\varepsilon]}(D))
\]
The maps $\stab_{\de,\varepsilon}^\skew(D)$ form a direct system, and the \emph{Pfaffian line} of $D$ is
\[
 \PF_\K D=\lim\limits_{\longra}\det_\K V_{[0,\de]}(D).
\]
Let $\stab_\de^\skew(D)\colon\det_\K V_{[0,\de]}(D)\to\PF_\K D$ be the canonical map.
\end{dfn}

\begin{dfn}
\label{bioAdef4}
Let $\cD=\{D(t)\}$ be a continuous $T$-family of $\K$-linear skew-adjoint Fredholm operators. The \emph{Pfaffian line bundle} is
\[
 \PF_\K\cD=\coprod\nolimits_{t\in T}\PF_\K D(t)
\]
with the unique topology such that all $\stab_\de^\skew(\cD)|_{T_\de}=\coprod_{t\in T_\de}\stab_\de^\skew(D(t))$ are isomorphisms of line bundles over $T_\de$ (the domain is naturally a line bundle).
%The definition of the topology is independent of the choice of cut-off by \eqref{stabCocycle}.
\end{dfn}

There is a unique isomorphism $\PF(D^*)\to\PF(D)^*$ such that
\begin{equation}
\begin{tikzcd}
 \PF(D^*)\arrow[r,dashed] & \PF(D)^*\dar{\stab_\de^\skew(D)^*}\\
 \det V_{[0,\de]}(D^*)\rar
 =\det \ol{V_{[0,\de]}(D)}
 \uar{\stab_\de^\skew(D^*)}
 & \det V_{[0,\de]}(D)^*
\end{tikzcd}
\label{bioAeq13}
\end{equation}
where the bottom horizontal map is induced by the isomorphism $\ol{V_{[0,\de]}(D)}\to V_{[0,\de]}(D)^*$ from the inner product on $V_{[0,\de]}(D).$ Hence if $v_1,\ldots, v_n$ is an orthonormal basis of $V_{[0,\de]}(D)$ over $\K,$ then the lower horizontal map in \eqref{bioAeq13} is $v_1\wedge\cdots\wedge v_n\mapsto v_1^\flat\wedge\cdots\wedge v_n^\flat=v_1^*\wedge\cdots\wedge v_n^*.$ \eqref{bioAeq13} is continuous in families.

Let $D,$ $\tilde D$ be skew-adjoint Fredholm operators. For the direct sum there is a unique isomorphism $\PF D\ot\PF \tilde D\to\PF(D\op\tilde D)$ such that
\begin{equation}
\begin{tikzcd}
	\PF D\ot\PF\tilde D\arrow[r,dashed] & \PF(D\op\tilde D)\\
	\det V_{[0,\de]}(D)\ot\det V_{[0,\de]}(\tilde D)\uar{\stab_\de^\skew(D)\ot\stab_\de^\skew(\tilde D)}\rar
	&
	\det V_{[0,\de]}(D\op\tilde D)\uar{\stab_\de^\skew(D\op\tilde D)}
\end{tikzcd}	
\label{bioAeq14}
\end{equation}
commutes for all $\de\ge0,$ where the lower horizontal map is given by \eqref{bioAeq1} and the direct sum decomposition $V_{[0,\de]}(D\op\tilde D)=V_{[0,\de]}(D)\op V_{[0,\de]}(\tilde D).$ The isomorphism is continuous in families.

\addsubsection{Spectral coverings}
\label{bioA4}

In contrast to the skew-adjoint case, $O(\DET_\R\cD)$ may be non-trivial even when every $D(t)$ is self-adjoint (for an example, see \cite{JoUp1}*{\S 2.4} and \cite{JTU}). We can now define the harder problem of `enumerating' the spectrum, which is a generalization of the orientation problem (see Proposition~\ref{bioAprop1} below).

\begin{dfn}
\label{bioAdef5}
Let $\K=\R,\C,$ or $\H.$ A $\K$-linear Fredholm operator $D\colon H\to H$ is \emph{self-adjoint} if $D^*=D.$ The \emph{spectral $\Z$-torsor} $\SP_\K D$ is the set of equivalence classes of pairs $(m,\de)\in\Z\t(\R\setminus\Spec(D))$ where $(m,\de)$ is equivalent to $(n,\varepsilon)$ for $\de<\varepsilon$ if $n-m=\dim_\K\Eig_{[\de,\varepsilon]}(D).$ For $\de\in\R\setminus\Spec(D)$ let $\stab_\de^\sym\colon\Z\to\SP_\K D$ be the quotient projection $m\mapsto[(m,\de)].$

Let $\cD=\{D(t)\}$ be a continuous $T$-family of $\K$-linear self-adjoint Fredholm operators. The \emph{spectral covering} of $\cD$ is the principal $\Z$-bundle
\[
 \SP_\K\cD=\coprod\nolimits_{t\in T}\SP_\K D(t)
\]
over $T$ with the unique topology for which all $\coprod_{t\in T_\de}\stab_\de^\sym\colon T_\de\t\Z\to\SP_\K\cD|_{T_\de}$ are isomorphisms of principal $\Z$-bundles over $T_\de.$
\end{dfn}

\begin{rem}
\label{bioArem1}
If the spectrum of $D$ is bi-infinite (for example, for a first order elliptic differential operator), the spectral torsor $\SP_\K D$ can be identified with the $\Z$-torsor of enumerations $\cdots\le \la_{-1}\le\la_0\le\la_1\le\cdots$ of the spectrum of $D$ (eigenvalues repeated according to multiplicity). The $\Z$-action shifts the labels.
\end{rem}

For all $k\in\N$ we can quotient $\SP_\K\cD$ by the action of the subgroup $k\Z\subset\Z$ and obtain a principal $\Z_k$-bundle $\SP_\K\cD/k\Z,$ whose elements are called \emph{mod $k$ symmetric orientations}. This principal bundle is relevant to the idea of \emph{Floer gradings} as in Donaldson~\cite{Do}*{\S 3.3}, and the arguments there show that for a compact oriented Riemannian $4$-manifold $X$ and principal $\SU(2)$-bundle $P\to X$ the principal $\Z_8$-bundle $\SP_\R(\cD)/8\Z$ can be $\Aut(P)$-equivariantly trivialized over the space of connections $\cA_P,$ where $\cD$ is the $\cA_P$-family of twisted ASD-operators $(\d_+,\d^*)^{\na_{\Ad P}}$ on $X.$

\begin{prop}
\label{bioAprop1}
If\/ $D$ is a self-adjoint\/ $\R$-linear Fredholm operator, then there is a canonical isomorphism of principal\/ $\Z_2$-bundles\/ $\SP_\R(D)/2\Z\cong O(\DET_\R D),$ which is continuous in families. In particular, mod\/ $k$ symmetric orientations with even\/ $k\in\N$ determine ordinary orientations.
\end{prop}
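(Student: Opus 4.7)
The plan exploits the identification, forced by self-adjointness, that $V^+_{[0,\de]}(D) = V^-_{[0,\de]}(D) = \Eig_{[-\de,\de]}(D)$. Writing $W_\de$ for this common space (with $\de > 0$, $\de \notin \Spec(D)$), the line $\det W_\de \otimes \det W_\de^*$ carries a canonical nonzero element $v_\de \otimes v_\de^*$, obtained from any orthonormal basis $e_1,\dots,e_N$ of $W_\de$ as $v_\de = e_1 \wedge \cdots \wedge e_N$: the signs incurred on $v_\de$ and on $v_\de^*$ by a change of orthonormal basis cancel, so $v_\de \otimes v_\de^*$ is intrinsic. Pushing forward via $\stab_\de$ produces a distinguished nonzero element of $\DET_\R D$, and the task reduces to correctly coupling the $\de$-dependence of its orientation to the $\Z$-grading on $\SP_\R D$.

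I propose to define the isomorphism on representatives $(m,\de)$ with $\de > 0$ in $\R \setminus \Spec(D)$ (every class admits such a representative) by
\[
\phi(m,\de) = (-1)^{m + k(\de)}\,\R_{>0}\stab_\de(v_\de \otimes v_\de^*) \in O(\DET_\R D), \qquad k(\de) := \dim_\R \Eig_{(-\de,\de)\setminus\{0\}}(D).
\]
The central calculation unwinds Definition~\ref{bioAdef1} using that $J_\mu = \operatorname{sgn}(\mu)\,\id$ on $\Eig_\mu(D)$ for eigenvectors of $D$ in $V^+_{(\de,\varepsilon]}(D) = \Eig_{[-\varepsilon,-\de)}(D) \oplus \Eig_{(\de,\varepsilon]}(D)$, yielding the stabilization identity $\stab_{\de,\varepsilon}(v_\de \otimes v_\de^*) = (-1)^{\dim \Eig_{[-\varepsilon,-\de)}(D)}\,v_\varepsilon \otimes v_\varepsilon^*$ for $0 < \de < \varepsilon$ in $\R \setminus \Spec(D)$. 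Combined with $k(\varepsilon) - k(\de) = \dim \Eig_{(\de,\varepsilon)}(D) + \dim \Eig_{(-\varepsilon,-\de)}(D)$ and the equivalence relation $n - m = \dim \Eig_{(\de,\varepsilon)}(D)$ from Definition~\ref{bioAdef5}, a short parity count shows that the contributions to $\phi(m,\de)/\phi(n,\varepsilon)$ cancel modulo $2$, so $\phi$ is well-defined on equivalence classes. Descent is clear since $\phi(m+2,\de) = \phi(m,\de)$, and $\phi(m+1,\de) = -\phi(m,\de)$ makes the induced map a $\Z_2$-equivariant bijection $\bar\phi\colon \SP_\R D/2\Z \to O(\DET_\R D)$.

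Continuity in families is then automatic: over the open set $T_\de \subset T$ from Definition~\ref{bioAdef2}, the bundle $V^+_{[0,\de]}(\cD) \to T_\de$ has locally constant rank, the intrinsically defined section $t \mapsto v_\de(t) \otimes v_\de(t)^*$ is continuous, $\stab_\de(\cD)$ is continuous by Definition~\ref{bioAdef2}, and the integer $k(\de,t)$ is locally constant on $T_\de$; compatibility on overlaps $T_\de \cap T_\varepsilon$ is exactly the stabilization identity already verified. The main obstacle is identifying the correct sign $k(\de)$: a naive choice like $(-1)^m$ fails by exactly the factor $(-1)^{\dim \Eig_{[-\varepsilon,-\de)}(D)}$ that $J$ contributes on negative eigenspaces, and one must insert the combined parity of all nonzero eigenvalues of $D$ in $(-\de,\de)$ to cancel this against the spectral shift in the equivalence relation.
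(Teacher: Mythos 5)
Your approach is genuinely different from the paper's. The paper deforms $D$ linearly to the invertible operator $D-\la$, where at cut-off $\de=0$ the line $\DET_\R(D-\la)$ is canonically $\R$, and transports the resulting canonical orientation back to $O(\DET_\R D)$ through the determinant line bundle of the one-parameter family $\{D-t\la\}_{t\in[0,1]}$; the sign $(-1)^m$ is then inserted by hand. You instead use that self-adjointness forces $V^+_{[0,\de]}(D)=V^-_{[0,\de]}(D)=\Eig_{[-\de,\de]}(D)$, produce the basis-independent element $v_\de\otimes v_\de^*$ directly, and track the signs through the stabilization maps. Your well-definedness computation is correct, and it makes explicit what the paper leaves as ``not hard to check.'' But the continuity step has a genuine gap.

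The false claim is that $k(\de,t)=\dim_\R\Eig_{(-\de,\de)\setminus\{0\}}(D(t))$ is locally constant on $T_\de$. On $T_\de$ the total dimension $\dim_\R\Eig_{[-\de,\de]}(D(t))=\dim_\R V^+_{[0,\de]}(D(t))$ is locally constant, since $\pm\de\notin\Spec(D(t))$ there and the spectral projection varies continuously; but $\dim_\R\Ker D(t)$ is only upper semicontinuous and drops when an eigenvalue of $D(t)$ leaves zero. Thus $k(\de,t)=\dim_\R\Eig_{[-\de,\de]}(D(t))-\dim_\R\Ker D(t)$ jumps. Because the section $t\mapsto\R_{>0}\stab_\de(v_\de(t)\otimes v_\de(t)^*)$ of $O(\DET_\R\cD)|_{T_\de}$ is genuinely continuous — it depends only on the continuous line bundle $\det\Eig_{[-\de,\de]}(\cD)\otimes\det\Eig_{[-\de,\de]}(\cD)^*$, not on the kernel/non-kernel splitting — multiplying by $(-1)^{k(\de,t)}$ produces a discontinuous map at any $t$ where $\dim_\R\Ker D(t)$ changes by an odd amount. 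The repair is to include the kernel in the sign: replace $k(\de)$ by $c(\de)=\dim_\R\Eig_{[-\de,\de]}(D)=\dim_\R V^+_{[0,\de]}(D)$. Your parity argument goes through unchanged, since $c(\varepsilon)-c(\de)$ and $k(\varepsilon)-k(\de)$ both equal $\dim_\R\Eig_{(\de,\varepsilon)}(D)+\dim_\R\Eig_{[-\varepsilon,-\de)}(D)$ when $\de,\varepsilon\notin\Spec(D)$, while $c(\de,t)$ is locally constant on $T_\de$ and continuity then holds. This corrected sign also matches the one used in the parallel $\ell\equiv 0$ step of Theorem~\ref{bio3thm2}, see \eqref{bio4eq17}, where $(-1)^{\dim_\R V^+_{[0,\de]}(D_X)}$ appears. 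Your formula and the corrected one differ by exactly $(-1)^{\dim_\R\Ker D}$, which is $\Z_2$-valued but not locally constant in $t$, so at most one of the two can be continuous in families, and it is the one with the kernel included.
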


\begin{proof}
Pick $\la\in\R\setminus\Spec(D)$ and define a family $\cD$ of $\R$-linear Fredholm operators by $D(t)=D-t\la$ for $t\in[0,1].$ Then $O(\DET_\R \cD)\to[0,1]$ is a principal $\Z_2$-bundle. Let $\de=0.$ Note that $V_{[0,\de]}^\pm(D-\la)=\{0\}$ since $D-\la$ is invertible and hence $\stab_\de(D-\la)\colon\R\to\DET_\R(D-\la)$ is an isomorphism and $O(\DET_\R(D-\la)).$ In particular, we have a canonical orientation $\R_{>0}\stab_\de(1)$ and we let $o_\la(D)\in O(\DET_\R D)$ be its image under the fiber transport isomorphism $O(\DET_\R(D-\la))\to O(\DET_\R D)$ in the principal $\Z_2$-bundle between the endpoints of the interval $[0,1].$ Now define $\SP_\R(D)/2\Z\to O(\DET_\R D),$ $[m+2\Z,\la]\mapsto (-1)^m o_\la(D).$ It is not hard to check that this map is independent of the choice of $\la.$ This also implies that the map is continuous in families.
\end{proof}

For the direct sum of self-adjoint Fredholm operators there is an isomorphism
\e
 \SP_\K D\ot_\Z\SP_\K \tilde D\longra\SP_\K(D\op\tilde D)
\label{bioAeq15}
\e
that maps $[m,\de]\ot[\tilde m,\de]\mapsto [m+\tilde m,\de]$ for all $\de\in\R\setminus(\Spec(D)\cup\Spec(\tilde D))$ and $m,\tilde m\in\Z.$ The isomorphism is continuous in families.

For the negative there is an isomorphism
\e
 \SP_\K D\longra\SP_\K(-D)^*=\Hom_\Z(\SP_\K(-D),\Z)
\label{bioAeq16}
\e
mapping $[m,\de]$ to the morphism $[m,\de]^*\colon[n,-\de]\mapsto m+n$ for all $\de\in\R\setminus\Spec(D).$

\section{Real Dirac operators}
\label{bioB}

The objective this appendix is to explain the precise connection between the real Dirac operator on a manifold with boundary and the real Dirac operator on its boundary, for which the author could not find a good reference.

\addsubsection{Clifford algebras}
\label{bioB1}

We first recall the basic terminology for Clifford algebras and spinor representations. Lawson--Michelsohn~\cite{LaMi} and Atiyah--Bott--Shapiro~\cite{AtBoSh} are good general references.

\begin{dfn}
\label{bioBdef1}
The real $n$-dimensional \emph{Clifford algebra} is the associative, unital $\R$-algebra $\Cl_n$ generated by $e_1,\ldots, e_n$ subject to the \emph{Clifford relations}
\[
 e_k e_\ell+e_\ell e_k=-2\de_{k,\ell}\qquad (1\le k,\ell\le n).
\]

There is an algebra automorphism $\al\colon\Cl_n\to\Cl_n$ of the Clifford algebra defined on all generators by $\al(e_k)=-e_k.$ Since $\al$ is an involution, it splits the Clifford algebra into its $(-1)^j$-eigenspaces $\Cl_n^j$ for $j=0,1,$ the \emph{even part} $\Cl_n^0$ and the \emph{odd part} $\Cl_n^1.$ Then $\Cl_n^0\subset\Cl_n$ is a subalgebra. Since $e_ke_n$ for $1\le k\le n-1$ satisfy the Clifford relations for $\Cl_{n-1}$ there is an isomorphism $j_{n-1}\colon\Cl_{n-1}\to\Cl_n^0$ of ungraded algebras satisfying $j_{n-1}(e_k)=e_ke_n.$

The \emph{volume element} in the Clifford algebra $\Cl_n$ is $\om_n=e_1\cdots e_n.$ Using the Clifford relations one finds (see \cite{LaMi}*{Prop.~3.3})
\ea
\label{bioBeq1}
\om_n^2&=(-1)^{n(n+1)/2},
&\om_n e_k&=(-1)^{n+1}e_k\om_n\qquad(1\le k\le n).
\ea
\end{dfn}

Here is the classification of real Clifford algebras.

\begin{thm}[see {\cite{LaMi}*{Thm.~I.4.3}}]
\label{bioBthm1}
For\/ $n\not\equiv 3,7$ the Clifford algebra\/ $\Cl_n$ is isomorphic to a matrix algebra over\/ $\R,$ $\C,$ or\/ $\H$ and for\/ $n\equiv 3,7$ is isomorphic to a direct sum of two copies of the same matrix algebra.
\end{thm}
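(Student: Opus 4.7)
The plan is to combine three ingredients: a direct hand calculation for a few small values of $n,$ a structural periodicity isomorphism to propagate the answer to all dimensions, and a volume-element argument that explains precisely when the algebra splits as a direct sum.

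First I would compute the low-dimensional cases by hand. From the relations $e_k^2 = -1$ one reads off $\Cl_0 \cong \R,$ $\Cl_1 \cong \C$ via $e_1 \mapsto i,$ and $\Cl_2 \cong \H$ via $e_1 \mapsto i,$ $e_2 \mapsto j.$ For $\Cl_3$ I would use \eqref{bioBeq1}: $\om_3$ is central with $\om_3^2 = 1,$ so the orthogonal central idempotents $\tfrac12(1 \pm \om_3)$ split $\Cl_3 \cong \Cl_3^0 \op \Cl_3^0 \cong \H \op \H$ (the first factor computed via $j_2$).

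To propagate to all $n$ I would establish the $8$-fold periodicity $\Cl_{n+8} \cong \Cl_n \ot_\R \R(16).$ This rests on the two ``bridge'' isomorphisms $\Cl_{0,n+2} \cong \Cl_{n,0} \ot_\R \Cl_{0,2}$ and $\Cl_{n+2,0} \cong \Cl_{0,n} \ot_\R \Cl_{2,0}$ between algebras of opposite signature, which are defined by sending a new generator $e_k$ to $e_k \ot \om$ where $\om$ is the volume element of the small factor (this trick flips the sign of $e_k^2$). Iterating these four times one returns to the original signature and picks up a tensor factor $\Cl_{0,2} \ot \Cl_{2,0} \ot \Cl_{0,2} \ot \Cl_{2,0},$ which simplifies to $\R(16)$ using $\H \ot_\R \H \cong \R(4)$ and similar identities. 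Combined with the classification in one full period, $8$-periodicity determines all of $\Cl_n.$

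The split versus simple dichotomy is then controlled by $\om_n.$ By \eqref{bioBeq1}, $\om_n$ is central iff $n$ is odd, and $\om_n^2 = +1$ iff $n(n+1)/2$ is even, i.e.\ $n \equiv 0, 3 \pmod 4.$ Intersecting these two conditions forces $n \equiv 3 \pmod 4,$ and combining with the $8$-periodic matrix-algebra identification gives the two split cases $n \equiv 3, 7 \pmod 8.$ In these cases $\tfrac12(1 \pm \om_n)$ split $\Cl_n$ into two copies of the simple matrix algebra $\Cl_n^0 \cong \Cl_{n-1},$ while in all other $n$ either $\om_n$ fails to be central or squares to $-1,$ and $\Cl_n$ remains a simple matrix algebra. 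The main obstacle is the careful construction and verification of the bridging isomorphisms between signatures: one must check that the generator assignment $e_k \mapsto e_k \ot \om$ actually satisfies the Clifford relations with the correct signs and extends to an algebra isomorphism, and then the four-fold iteration must be assembled correctly to collapse to a single $\R(16)$ factor. The bookkeeping is routine but sign-sensitive, and provides the only nontrivial input beyond the small cases and the volume-element splitting.
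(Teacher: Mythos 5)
The paper does not prove this theorem but cites Lawson--Michelsohn Thm.~I.4.3, and your argument is essentially the standard Atiyah--Bott--Shapiro / Lawson--Michelsohn proof found there: explicit identification of $\Cl_n$ for small $n,$ the signature-flipping tensor isomorphisms $\Cl_{0,n+2}\cong\Cl_{n,0}\ot\Cl_{0,2}$ and $\Cl_{n+2,0}\cong\Cl_{0,n}\ot\Cl_{2,0}$ yielding eight-fold periodicity $\Cl_{n+8}\cong\Cl_n\ot\R(16),$ and the volume-element criterion from \eqref{bioBeq1} (central and squares to $+1$ exactly when $n\equiv 3\pmod 4$) to explain the split into two matrix factors for $n\equiv 3,7\pmod 8.$ Your plan is correct and matches the cited reference; the only implicit work you gloss over is that ``one full period'' requires computing $\Cl_n$ (and the opposite-signature $\Cl_{0,n}$) for $n$ up to $7,$ not just $0$ through $3,$ but this is handled by the same bridging isomorphisms plus the elementary tensor identities $\H\ot_\R\H\cong\R(4),$ $\C\ot_\R\H\cong\C(2),$ etc.
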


\begin{table}[htb]
\centerline{\begin{tabular}{|l|c|c|c|c|c|c|c|c|c|c|c|}
\hline
$n$ & $1$ & $2$ & $3$ & $4$ & $5$ & $6$ & $7$ & $8$\\
\hline
$\Cl_n$ & $\C$ & $\H$ & $\H\op\H$ & $\H^{2\t 2}$ & $\C^{4\t 4}$ & $\R^{8\t 8}$ &
 $\begin{array}{c}
  \R^{8\t 8}\\\op\R^{8\t 8}
 \end{array}$
& $\R^{16\t 16}$\\
%\hline
%$\De_n$ & $\C$ & $\H$ & --- & $\H^2$ & $\C^4$ & $\R^8$ & --- & $\R^{16}$\\
%\hline
%$\De_n^\pm$ & --- & --- & $\H,\H$ & --- & --- & --- & $\R^8,\R^8$ & ---\\
\hline
\end{tabular}}
\caption{Examples of Clifford algebras $\Cl_n$ for $1\le n\le 8,$ extended upwards periodically by $\Cl_{n+8}\cong\Cl_n\ot_\R\R^{8\t 8}.$}
\label{bioBtab1}
\end{table}

According to Theorem~\ref{bioBthm1} the Clifford algebra $\Cl_n$ has a unique irreducible algebra representation $\rho_{\De_n}\colon\Cl_n\to\End_\R(\De_n)$ for $n\not\equiv 3,7$ and a pair of inequivalent irreducible algebra representations $\rho_{\De_n^\pm}\colon\Cl_n\to\End_\R(\De_n^\pm)$ for $n\equiv 3,7,$ and we fix invariant metrics on these representations. From Table~\ref{bioBtab1} we can read off for which $n$ these representations are real, complex, or quaternionic. As we are interested in the \emph{real} Dirac operator here, all our representations will be treated as \emph{real} representations, and we will view a complex structure $I$ or a quaternionic structure $I,J$ on a representation as extra data.

For an algebra representation $\rho\colon B\to\End_\R(V)$ and morphism $f\colon A\to B$ of algebras, write $f^*(V)$ for the vector space $V$ with the pullback representation $\rho\circ f.$ For example, on $\al^*(\De_n)$ the element $e_k$ acts by $\rho_{\al^*(\De_n)}(e_k)=-\rho_{\De_n}(e_k).$ When $f$ is an inclusion we also write $V|_A$ for $f^*(V).$ For example, the inclusion $i_{n-1}\colon\Cl_{n-1}\to\Cl_n$ is defined on generators by $i_{n-1}(e_k)=e_k$ for $1\le j\le n-1.$

If $n\equiv 3,7,$ then \eqref{bioBeq1} implies that $\om_n\in\Cl_n$ is central, satisfies $\om_n^2=1,$ and thus acts by scalars on the irreducible representations $\De_n^\pm.$ By passing to $\al^*(\De_n^\pm)$ if necessary, we may always arrange for $\rho_{\De_n^\pm}(\om_n)=\pm1.$

\addsubsection{Positive and skew-adjoint Dirac operators}
\label{bioB2}

We now define the real Dirac operator, which refers to the positive Dirac operator if $n\equiv 0,4$ and the skew-adjoint Dirac operator if $n\equiv 1,2.$

Recall from \cite{LaMi}*{Ch.~I, \S 2} that the connected double cover $\Spin(n)\to\SO(n)$ is a subgroup of the group of units of the even part of the Clifford algebra. The \emph{spinor representation} $\Si_n=\De_n^{(+)}|_{\Spin(n)}$ is the restriction of the irreducible Clifford algebra representation to $\Spin(n)\subset\Cl_n.$
% Since $\De_n^+|_{\Spin(n)}\cong\De_n^-|_{\Spin(n)}$ in case $\Cl_n$ has two inequivalent irreducible representations, it suffices to consider one of these.
Notice here our convention that $\De_n$ is an algebra representation and $\Si_n$ is a Lie group representation. The spinor representation is equipped with a \emph{Clifford multiplication} $\Cl_n\ot\Si_n\to\Si_n,$ $e_k\ot\psi\mapsto\rho_{\De_n^{(+)}}(e_k)(\psi).$

\begin{dfn}
\label{bioBdef2}
A  Riemannian spin manifold $X,$ possibly with boundary or corners, has a principal $\Spin(n)$-bundle $P_\Spin(X)\to X.$ The \emph{real spinor bundle} is the vector bundle $\Si_X=(P_\Spin(X)\t\Si_n)/\Spin(n)$ associated to the spinor representation. There is a natural metric and connection on $\Si_X$ induced by the Levi-Civita connection on $TX.$ The (full) \emph{real Dirac operator} $D\colon\Ga^\iy(\Si_X)\to\Ga^\iy(\Si_X)$ is defined in a local orthonormal frame by $
D=\sum_{k=1}^n \rho_{\De_n^{(+)}}(e_k)\na_{e_k}.$
\end{dfn}

On a Riemannian spin manifold the Dirac operator is a globally defined, formally self-adjoint, first-order elliptic differential operator \cite{LaMi}*{Ch.~II, \S 5}.
\medskip

Sometimes we can `halve' the real Dirac operator: Since $\Cl_n^0\cong\Cl_{n-1},$ $\Spin(n)\subset\Cl_n^0,$ and $\dim_\R\De_n=2\dim_\R\De_{n-1}^{(+)}$ for $n\equiv 0,1,2,4$ by Table~\ref{bioBtab1}, the real spinor representation $\Si_n$ becomes reducible for $n\equiv 0,1,2,4$ and splits into \emph{half-spinor representations} $\Si_n^\pm,$ exchanged by the Clifford multiplication $\rho_{\De_n}(e_k)\colon\Si_n^\pm\to\Si_n^\mp.$

\begin{itemize}
\item
If $n\equiv 0,$ the real representation $\Si_n$ decomposes into non-isomorphic real subrepresentations $\Si_n^\pm,$ which are the $(\pm 1)$-eigenspaces of $\rho_{\De_n}(\om_n).$
\item
If $n\equiv 1,$ the complex representation $(\Si_n,I)$ admits a real structure $\si\colon\ab\Si_n\to\ol{\Si_n}$ and decomposes into isomorphic real subrepresentations with $\Si_n^+=I(\Si_n^-)\subset\Si_n,$ where $\Si_n^\pm$ are the $(\pm1)$-eigenspaces of $\si.$ Moreover, the complex structure coincides with the volume form $I=\rho_{\De_n}(\om_n).$
\item
If $n\equiv 2,$ the quaternionic representation $(\Si_n,I,J)$ decomposes into a complex conjugate pair of complex subrepresentations with $\Si_n^+=J(\Si_n^-)\subset\Si_n,$ where $\Si_n^\pm$ are the $(\pm i)$-eigenspaces of $\rho_{\De_n}(\om_n).$ % These are only complex subspaces, which get reversed by $J$ since it is $\C$-anti-linear.
\item
If $n\equiv 4,$ the quaternionic representation $(\Si_n,I,J)$ decomposes into non-isomorphic quaternionic subrepresentations $\Si_n^\pm,$ the $(\pm 1)$-eigenspaces of $\rho_{\De_n}(\om_n).$
\end{itemize}

\begin{dfn}
\label{bioBdef3}
Let $\dim X\equiv 0,1,2,4.$ The decomposition $\Si_n=\Si_n^+\op\Si_n^-$ induces a decomposition $\Si_X=\Si_X^+\op \Si_X^-$ into \emph{positive} and \emph{negative real spinor bundles} and $\slashed{D}_X$ can be decomposed into \emph{positive} and \emph{negative Dirac operators}
\[
 \slashed{D}_X=
 \begin{pmatrix}
 	0 & \slashed{D}_X^-\\
 	\slashed{D}_X^+ & 0
 \end{pmatrix},
 \quad
 \slashed{D}_X^\pm\colon\Ga^\iy\bigl(\Si_X^\pm\bigr)\longra\Ga^\iy\bigl(\Si_X^\mp\bigr),
 \quad \bigl(\slashed{D}_X^+\bigr)^*=\slashed{D}_X^-.
\]
When $n\equiv 1,2$ we can use the complex and quaternionic structure to define from $\slashed{D}_X^+$ the \emph{skew-adjoint Dirac operator} $\slashed{D}_X^\skew$ as the composition
\ea
 &\Ga^\iy(\Si_X^+)\xrightarrow{\smash{\slashed{D}_X^+}}\Ga^\iy(\Si_X^-)\overset{\smash{-I}}{\longra}\Ga^\iy(\Si_X^+) &&\text{if $n\equiv 1,$}\label{bioBeq2}\\
 &\Ga^\iy(\Si_X^+)\xrightarrow{\smash{\slashed{D}_X^+}}\Ga^\iy(\Si_X^-)\overset{\smash{-J}}{\longra}\Ga^\iy(\ol{\Si_X^+}) &&\text{if $n\equiv 2.$}
 \label{bioBeq3}
\ea
If $n\equiv 1,$ then $\slashed{D}_X^\skew$ is $\R$-linear and $\bigl(\slashed{D}_X^\skew\bigr)^*=-\slashed{D}_X^\skew.$ If $n\equiv 2,$ then $\slashed{D}_X^\skew$ is $\C$-linear and $\bigl(\slashed{D}_X^\skew\bigr)^*=-\ol{\slashed{D}_X^\skew}.$ When we speak of `the' \emph{real Dirac operator} $D_X$ in dimension $n,$ we mean the positive Dirac operator $\slashed{D}_X^+$ if $n\equiv 0,4,$ the skew-adjoint Dirac operator $\slashed{D}_X^\skew$ if $n\equiv 1,2,$ and the full Dirac operator $\slashed{D}_X$ otherwise.
\end{dfn}

\begin{ex}
\label{bioBex1}
\textbf{(a)}
Let $X$ be a $1$-dimensional oriented Riemannian manifold. As $\Spin(1)=\{\pm1\}\subset\Cl_1=\C,$ a spin structure is a double covering $P_{\Spin}(X)\to X.$ Also $\De_1=\C$ with $\rho_{\De_1}(e_1)=i$ and $\Si_1=\C$ is the sign representation. Let $t$ be a local coordinate on $X$ where $P_{\Spin}(X)$ is trivial. Locally, a spinor is then a complex-valued function on which the full Dirac operator acts by $\slashed{D}_X=i\frac{\d}{\d t}.$ Since $\Si_1^+=\R,$ $\Si_1^-=i\R$ we can locally identify positive spinors with real-valued functions and negative spinors with purely imaginary functions. The positive Dirac operator is $\slashed{D}_X^+=i\frac{\d}{\d t}$ and the skew-adjoint Dirac operator is $\slashed{D}_X^\skew=\frac{\d}{\d t}.$
\smallskip

\noindent
\textbf{(b)}
Let $X$ be a Riemann surface. We have $\De_2=\H$ where $\rho_{\De_2}(e_1)(q)\ab=qi,$ $\rho_{\De_2}(e_2)(q)=qj,$ and the quaternionic structure on $\De_2$ is $I(q)=iq$ and $J(q)=jq.$ Fix a complex coordinate $z=x+iy$ on $X$ where $P_{\Spin}(X)$ is trivial. Locally, a spinor is then an $\H$-valued function $\psi(z)$ on which the full Dirac operator acts by $\slashed{D}_X(\psi)=\frac{\6\psi}{\6x}i+\frac{\6\psi}{\6y}j.$ The spinor representation $\Si_2$ splits as $\Si_2^\pm=\C(1\pm j)\subset\H,$ so half-spinors $\psi^\pm(z)$ are locally complex-valued functions $f^\pm(z)$ %by $\psi^\pm(z)=e^{-i\pi/4}f^\pm(z)(1\pm j).$ The 
and the skew-adjoint Dirac operator is the Cauchy--Riemann operator $\slashed{D}_X^\skew(f)=2\ol{\6 f/\6 \bar z}.$%=\ol{\frac{\6 f}{\6 x}+i\frac{\6 f}{\6 y}}=
\end{ex}

\addsubsection{Dirac operators in adjacent dimensions}
\label{bioB3}

Finally, we can explain the relationship between real Dirac operators in adjacent dimensions. The result is summarized in Table~\ref{bioBtab2}.

\begin{prop}
\label{bioBprop1}
\hangindent\leftmargini
\textup{\bf(a)}\hskip\labelsep
If\/ $n\equiv0,$ there is an isomorphism\/ $\Phi_{n+1}\colon\Si_n\ot\C\to\Si_{n+1}|_{\Spin(n)}$ of complex representations of\/ $\Spin(n)$ under which the real structure\/ $\si_{n+1}=\id_{\Si_{n+1}^+}\op-\id_{\Si_{n+1}^-}$ on\/ $\Si_{n+1}=\Si_{n+1}^+\op\Si_{n+1}^-$ corresponds to the complex conjugation on\/ $\Si_n\ot\C$ and the Clifford multiplication to
\begin{equation}
\begin{aligned}
 \rho_{\De_{n+1}}(e_k)\circ\Phi_{n+1}&=\Phi_{n+1}\circ(\rho_{\De_n}(\om_n e_k)\ot i),&&(1\le k\le n)\\
 \rho_{\De_{n+1}}(e_{n+1})\circ\Phi_{n+1}&=\Phi_{n+1}\circ(\rho_{\De_n}(\om_n)\ot i).
\end{aligned}
\label{bioBeq4}
\end{equation}
\begin{enumerate}
\stepcounter{enumi}
\item
If\/ $n\equiv1,$ there is a\/ $\C$-linear isomorphism\/ $\Phi_{n+1}\colon\Si_n\op\Si_n\to\Si_{n+1}|_{\Spin(n)}$ under which the Clifford multiplication and quaternionic structure are
\begin{equation}
\begin{aligned}
	\rho_{\De_{n+1}}(e_k)\circ\Phi_{n+1}&=\Phi_{n+1}\circ
	\begin{pmatrix}
	\rho_{\De_n}(e_k) & 0\\
	0 & -\rho_{\De_n}(e_k)
	\end{pmatrix}, &&\mathrlap{(1\le k\le n)}\\
	\rho_{\De_{n+1}}(e_{n+1})\circ\Phi_{n+1}&=\Phi_{n+1}\circ
	\begin{pmatrix}
	0 & \id_{\De_n}\\
	-\id_{\De_n} & 0
	\end{pmatrix},\\
	J\circ\Phi_{n+1}&=\Phi_{n+1}\circ
	\begin{pmatrix}
		0 & \si_n\\
		-\si_n & 0
	\end{pmatrix},
\end{aligned}
\label{bioBeq5}
\end{equation}
where\/ $\si_n$ is the real structure on\/ $\Si_n$ from \textup{(a)}.
\item
If\/ $n\equiv2,$ there is an\/ $\H$-linear isomorphism\/ $\Phi_{n+1}\colon\Si_n\to\Si_{n+1}|_{\Spin(n)}$ with
\begin{equation}
\begin{aligned}
 \rho_{\De_{n+1}}^+(e_k)\circ\Phi_{n+1} &=\Phi_{n+1}\circ\rho_{\De_n}(e_k), &&{(1\le k\le n)}\\
 \rho_{\De_{n+1}}^+(e_{n+1})\circ\Phi_{n+1} &=-\Phi_{n+1}\circ\rho_{\De_n}(\om_n).
\end{aligned}
\label{bioBeq6}
\end{equation}
\item
If\/ $n\equiv 3$ let\/ $\K=\H,$ and if\/ $n\equiv 7$ let\/ $\K=\R.$ There is a pair of\/ $\K$-linear isomorphisms\/ $\Phi_{n+1}^\pm\colon \Si_n\to\Si_{n+1}^\pm|_{\Spin(n)}$ under which
\begin{equation}
\begin{aligned}
 \rho_{\De_{n+1}}(e_k)\circ\Phi_{n+1}^\pm &=\Phi_{n+1}^\mp\circ\rho_{\De_n^+}(e_k),
 &&(1\le k\le n)\\
 \rho_{\De_{n+1}}(e_{n+1})\circ\Phi_{n+1}^\pm &= \pm\Phi_{n+1}^\mp.
\end{aligned}
\label{bioBeq7}
\end{equation}
\item
If\/ $n\equiv4,$ there is an\/ $\H$-linear isomorphism\/ $\Phi_{n+1}\colon\Si_n\to\Si_{n+1}|_{\Spin(n)}$ with
\begin{equation}
\begin{aligned}
\rho_{\De_{n+1}}(e_k)\circ\Phi_{n+1}&=\Phi_{n+1}\circ\rho_{\De_n}(e_k),&&(1\le k\le n)\\
\rho_{\De_{n+1}}(e_{n+1})\circ\Phi_{n+1}&=I\circ\Phi_{n+1}\circ\rho_{\De_n}(\om_n).
\end{aligned}
\label{bioBeq8}
\end{equation}
\item
If\/ $n\equiv5,$ then\/ $\Si_n$ has a quaternionic structure\/ $I,$ $J$ and there is an isomorphism of representations\/ $\Phi_{n+1}\colon\Si_n\to\Si_{n+1}|_{\Spin(n)}$ with
\begin{equation}
\begin{aligned}
 \rho_{\De_{n+1}}(e_ke_{n+1})\circ\Phi_{n+1}&=\Phi_{n+1}\circ\rho_{\De_n}(e_k),&&(1\le k\le n)
\\
\rho_{\De_{n+1}}(\om_{n+1})\circ\Phi_{n+1}&=\Phi_{n+1}\circ I,\\
\rho_{\De_{n+1}}(e_{n+1})\circ\Phi_{n+1}&=\Phi_{n+1}\circ J.
\end{aligned}
\label{bioBeq9}
\end{equation}
\item
If\/ $n\equiv6,$ there is an\/ $\R$-linear isomorphism\/ $\Phi_{n+1}\colon\Si_n\to\Si_{n+1}|_{\Spin(n)}$ with
\begin{equation}
\begin{aligned}
\rho_{\De_{n+1}^+}(e_k)\circ\Phi_{n+1}&=\Phi_{n+1}\circ\rho_{\De_n}(e_k),&&(1\le k\le n)\\
\rho_{\De_{n+1}^+}(e_{n+1})\circ\Phi_{n+1}&=\Phi_{n+1}\circ I.
\end{aligned}
\label{bioBeq10}
\end{equation}
\end{enumerate}
The isomorphisms \textup{(a)--(g)} are orthogonal and unique up to scalar factors.
\end{prop}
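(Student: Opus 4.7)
The plan is to handle each residue class $n\bmod 8$ separately. The basic tool is the algebra isomorphism $j_n\colon\Cl_n\to\Cl_{n+1}^0$ defined by $j_n(e_k)=e_ke_{n+1}$; a direct application of the Clifford relations gives $j_n(e_ie_j)=(e_ie_{n+1})(e_je_{n+1})=e_ie_j$ for $1\le i,j\le n$, so $j_n$ identifies $\Spin(n)\subset\Cl_n^0\subset\Cl_n$ with $\Spin(n)\subset\Cl_{n+1}^0\subset\Cl_{n+1}$. Pulling $\De_{n+1}$ back along $j_n$ therefore makes it into a $\Cl_n$-module whose restriction to $\Spin(n)$ reproduces $\Si_{n+1}|_{\Spin(n)}$.

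A dimension count using Table~\ref{bioBtab1} then shows that $\De_{n+1}|_{\Cl_n}$ realizes exactly one of three patterns: a direct sum of two copies of $\De_n$ for $n\equiv 0,1,2,4$; a single copy of $\De_n$ with the residual $\C$- or $\H$-structure forgotten for $n\equiv 5,6$; or the direct sum $\De_n^+\op\De_n^-$ of the two non-isomorphic irreducibles for $n\equiv 3,7$. In each case I choose $\Phi_{n+1}$, or the pair $\Phi_{n+1}^\pm$ when $n\equiv 3,7$, to be a $\Cl_n$-linear isomorphism implementing the relevant decomposition. Uniqueness up to scalar and orthogonality then follow from Schur's lemma: $\De_n$ (resp.\ $\De_n^\pm$) is irreducible with commutant equal to the (skew) field underlying the matrix description of $\Cl_n$ in Table~\ref{bioBtab1}, and after normalizing $\Phi_{n+1}$ to be an isometry for the chosen $\Spin(n)$-invariant inner products the only residual freedom is a unit scalar in that field.

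With $\Phi_{n+1}$ fixed, the formulas \eqref{bioBeq4}--\eqref{bioBeq10} are derived by direct computation. For $1\le k\le n$, the identity $e_k=-j_n(e_k)e_{n+1}$, a consequence of $e_{n+1}^2=-1$, expresses $\rho_{\De_{n+1}}(e_k)$ as $-\rho_{\De_{n+1}}(j_n(e_k))\rho_{\De_{n+1}}(e_{n+1})$; the first factor becomes $\rho_{\De_n}(e_k)$ under $\Phi_{n+1}$ by $\Cl_n$-linearity, while the second is identified through $\Phi_{n+1}$ with the extra structure installed on the source side (multiplication by $i$, a quaternionic unit, an off-diagonal swap, etc.). Compatibility with the complex, quaternionic, or real structures on $\Si_{n+1}$, together with the $\om$-relations in \eqref{bioBeq4} and \eqref{bioBeq8}--\eqref{bioBeq10}, then follows from $\om_{n+1}=\om_ne_{n+1}$ combined with the sign identities \eqref{bioBeq1}. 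The principal bookkeeping obstacle is tracking the $\om_n$-twists visible in \eqref{bioBeq4}, \eqref{bioBeq8}, and \eqref{bioBeq9}, whose origin is that the distinguished complex or quaternionic structure on $\Si_{n+1}$ equals $\rho(\om_{n+1})=\rho(\om_n)\rho(e_{n+1})$, so transporting $\rho(e_{n+1})$ through $\Phi_{n+1}$ into the simpler structure on the source side forces a compensating factor of $\rho_{\De_n}(\om_n)$; in the cases $n\equiv 3,7$ one additionally has to check that the $\pm1$-eigenspaces $\Si_{n+1}^\pm$ of $\rho(\om_{n+1})$ are the anti-diagonal subspaces of $\De_n^+\op\De_n^-$ rather than the two summands themselves, which is the source of the alternating sign in the second equation of \eqref{bioBeq7}.
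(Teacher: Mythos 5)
Your overall strategy—pull $\De_{n+1}$ back along the isomorphism $j_n\colon\Cl_n\to\Cl_{n+1}^0$ and then decompose and transport structures—is legitimate and is in fact what the paper does in case (f); in the other cases the paper argues in the opposite direction (it builds explicit operators $E_1,\ldots,E_{n+1}$ on a model space, checks the Clifford relations, and concludes the isomorphism from irreducibility). The two routes are morally dual, and yours has the virtue of being uniform. However, there are two concrete errors that would need to be fixed before this becomes a proof.

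First, the dimension count is wrong. You claim $\De_{n+1}|_{\Cl_n}$ is two copies of $\De_n$ for $n\equiv0,1,2,4$. From Table~\ref{bioBtab1} one has $\dim_\R\De_{n+1}=2\dim_\R\De_n$ only for $n\equiv0,1,3,7$; for $n\equiv2,4,5,6$ the real dimensions of $\De_n$ and $\De_{n+1}$ are \emph{equal}, so $j_n^*(\De_{n+1})$ is a \emph{single} copy of $\De_n$ (and the proposition's statements (c) and (e) indeed present $\Phi_{n+1}\colon\Si_n\to\Si_{n+1}$ with no doubling). This is the reason the paper has to re-package the additional operator $\rho_{\De_{n+1}}(e_{n+1})$ as a structure map ($-\rho_{\De_n}(\om_n)$ in (c), $I\rho_{\De_n}(\om_n)$ in (e)) rather than as an off-diagonal swap between two summands.

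Second, the explanation of the alternating sign in \eqref{bioBeq7} is incorrect as stated. In your model $j_n^*(\De_{n+1})$, for $n$ odd one computes $j_n(\om_n)=\om_{n+1}$, so the $\pm1$-eigenspaces of $\rho_{\De_{n+1}}(\om_{n+1})$ are literally the two $\Cl_n$-isotypic summands $\De_n^+$ and $\De_n^-$ (the $\pm1$-eigenspaces of $\om_n$), \emph{not} anti-diagonal subspaces. The diagonal/anti-diagonal picture appears only in the paper's different model $\De_n^+\op\De_n^+$ (with a sign-twist on the second summand), where $\om_{n+1}$ corresponds to the off-diagonal swap $\bigl(\begin{smallmatrix}0&1\\1&0\end{smallmatrix}\bigr)$. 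The genuine source of the sign in \eqref{bioBeq7} is that $\rho_{\De_{n+1}}(e_{n+1})$ interchanges $\Si_{n+1}^\pm$ and squares to $-1$, so after normalizing $\Phi_{n+1}^-=\rho_{\De_{n+1}}(e_{n+1})\circ\Phi_{n+1}^+$ one automatically gets $\rho_{\De_{n+1}}(e_{n+1})\circ\Phi_{n+1}^-=-\Phi_{n+1}^+$. You should also note that this choice of $\Phi_{n+1}^-$ is not independent of $\Phi_{n+1}^+$ up to Schur; it must be pinned to $\Phi_{n+1}^+$ via $\rho(e_{n+1})$, otherwise the second equation of \eqref{bioBeq7} need not hold.

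Finally, the step ``the second factor is identified through $\Phi_{n+1}$ with the extra structure installed on the source side'' is the crux of each case and is left unargued. To make it rigorous you must determine, for each residue, which $\Cl_n$-anticommuting operator on the source model $\rho_{\De_{n+1}}(e_{n+1})$ transports to; this is pinned down by comparing with the distinguished complex or quaternionic structure $\rho_{\De_{n+1}}(\om_{n+1})=\rho_{\De_{n+1}}(j_n(\om_n)e_{n+1})$ (for $n$ even) or $j_n(\om_n)=\om_{n+1}$ (for $n$ odd), together with the relation $\om_n^2=(-1)^{n(n+1)/2}$ from \eqref{bioBeq1}. You mention the $\om_n$-twist in the right place, but the identification needs to be carried out explicitly in each case to extract the precise formulas \eqref{bioBeq4}--\eqref{bioBeq10}.
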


\begin{proof}
We discuss (a) in detail and give briefer arguments for (b)--(g).
\smallskip

\noindent
\textbf{(a)} If $n\equiv 0,$ then \eqref{bioBeq1} implies that the $\C$-linear endomorphisms of $\De_n\ot_\R\C$ defined by $E_k=\rho_{\De_n}(\om_n e_k)\ot i,$ $1\le k\le n,$ and $E_{n+1}=\rho_{\De_n}(\om_n)\ot i$ satisfy the Clifford relations for $\Cl_{n+1}.$ Since $\De_{n+1}$ is the unique irreducible complex representation of $\Cl_{n+1}$ and $\dim_\C\De_n\ot_\R\C=\dim_\C\De_{n+1},$ there exists a $\C$-linear isomorphism $\Phi_{n+1}\colon\De_n\ot_\R\C\to\De_{n+1},$ orthogonal and unique up to a scalar.

According to Theorem~\ref{bioBthm1} there is a real structure on $\De_{n+1}|_{\Cl_{n+1}^0}$ (the irreducible representations of the algebra $\Cl_n\cong\Cl_{n+1}^0$ are real), unique up to a sign, which is fixed by the choice of splitting into $\Si_{n+1}^\pm.$ On $\De_n\ot_\R\C$ the complex conjugation defines a real structure. By Schur's Lemma, the real structures are automatically preserved up to sign, so by passing to $i\Phi_{n+1}$ if necessary we can arrange for them to be preserved exactly. Since $\Si_n=\De_n|_{\Spin(n)}$ and $\Si_{n+1}|_{\Spin(n)}=\De_{n+1}|_{\Spin(n)},$ the map $\Phi_{n+1}$ defines the claimed isomorphism of representations satisfying \eqref{bioBeq4}.
\smallskip

\noindent
\textbf{(b)} If $n\equiv 1,$ the complex representation $\De_n\op\al^*(\De_n)$ of $\Cl_n$ is extended by
\[
 E_{n+1}=
 \begin{pmatrix}
 	0 & \id_{\De_n}\\
 	-\id_{\De_n} & 0
 \end{pmatrix},
 \quad
 J=
 \begin{pmatrix}
 	0 & \si_n\\
 	-\si_n & 0
 \end{pmatrix}
% \colon\De_n\op\al^*(\De_n)\to\ol{\De_n\op\al^*(\De_n)}
\]
to a quaternionic representation of $\Cl_{n+1},$ which is thus isomorphic to $\De_{n+1}.$
\smallskip

\noindent
\textbf{(c)} If $n\equiv 2,$ the $\H$-linear endomorphisms of $\De_n$ defined by
\[
 E_k=\rho_{\De_n}(e_k), 1\le k\le n,\quad E_{n+1}=-\rho_{\De_n}(\om_n)
\]
satisfy the Clifford relations for $\Cl_{n+1}.$ Since $E_1\cdots E_n E_{n+1}=1,$ there is an $\H$-linear isomorphism $\De_n\to\De_{n+1}^+$ as in \eqref{bioBeq6}.
\smallskip

\noindent
\textbf{(d)} If $n\equiv 3,7,$ the $\K$-linear endomorphisms of $\De_n^+\op\De_n^+$ defined by
\[
 E_k=
 \begin{pmatrix}
 \rho_{\De_n^+}(e_k) & 0\\
 0 & -\rho_{\De_n^+}(e_k)
 \end{pmatrix},
\quad
(1\le k\le n)
\quad
 E_{n+1}=
 \begin{pmatrix}
 0 & \id_{\De_n^+}\\
 -\id_{\De_n^+} & 0
 \end{pmatrix}.
\]
satisfy the Clifford relations for $\Cl_{n+1}.$ It follows that there is a $\K$-linear isomorphism $\Psi_{n+1}\colon\De_n^+\op\De_n^+\to\De_{n+1}$ taking $E_k$ to $\rho_{\De_{n+1}}(e_k)$ for $1\le k\le n+1.$ Now
\[
 E_1\cdots E_nE_{n+1}=
 \begin{pmatrix}
 0 & \om_n\\
 \om_n & 0
 \end{pmatrix}
 =
 \begin{pmatrix}
 0 & \id_{\De_n^+}\\
 \id_{\De_n^+} & 0
 \end{pmatrix}
 \quad\text{as $\om_n|_{\De_n^+}=\id_{\De_n^+},$}
\]
so the splitting of $\Si_{n+1}=\De_{n+1}|_{\Spin(n+1)}$ into the $(\pm1)$-eigenspaces $\Si_{n+1}^\pm$ of the operator $\rho_{\De_{n+1}}(\om_{n+1})$ corresponds to the splitting of $\De_n^+\op\De_n^+$ into the diagonal and anti-diagonal spaces. Now define $\Phi_{n+1}^\pm(\phi)=\Psi_{n+1}(\phi,\pm\phi).$
% for $\phi\in\Si_n=\De_n^+|_{\Spin(n)}.$
\smallskip

\noindent
\textbf{(e)} If $n\equiv 4,$ the $\H$-linear endomorphisms on $\De_n$ defined by $E_k=\rho_{\De_n}(e_k)$ for $1\le k\le n$ and $E_{n+1}=I\rho_{\De_n}(\om_n)$ satisfy the Clifford relations for $\Cl_{n+1},$ so there is an $\H$-linear isomorphism $\Phi_{n+1}\colon\De_n\to\De_{n+1}.$
\smallskip

\noindent
\textbf{(f)} If $n\equiv 5,$ consider the real representation $j_n^*(\De_{n+1})$ of $\Cl_n,$ where $e_k$ acts by $\rho_{\De_{n+1}}(e_ke_{n+1})$ for $1\le k\le n.$ Observe that $I'=\rho_{\De_{n+1}}(\om_{n+1})$ and $J'=\rho_{\De_{n+1}}(e_{n+1})$ anti-commute with the Clifford multiplication by vectors, hence define a quaternionic structure on the representation $j_n^*(\De_{n+1})|_{\Cl_n^0}.$ The unique irreducible quaternionic representation $\De_n|_{\Cl_n^0}$ has the same dimension, hence there exists an $\H$-linear isomorphism $\Phi_{n+1}\colon\De_n|_{\Cl_n^0}\to j_n^*(\De_{n+1})|_{\Cl_n^0}.$% that maps $I,$ $J$ to $I',$ $J'.$
\smallskip

\noindent
\textbf{(g)} If $n\equiv 6,$ the $\R$-linear endomorphisms of $\De_n$ defined by $E_k=\rho_{\De_n}(e_k),$ $1\le k\le n,$ and $E_{n+1}=-\rho_{\De_n}(\om_n)$ satisfy the Clifford relations for $\Cl_{n+1}.$ Moreover, $E_1\cdots E_n E_{n+1}=1,$ so there is an $\R$-linear isomorphism $\Phi_{n+1}\colon\De_n\to\De_{n+1}^+.$ As $\om_n^2=-1$ and $\om_n$ commutes with $\Cl_n^0,$ we can arrange for $I=\rho_{\De_n}(\om_n).$
\end{proof}

We now compute the Dirac operator over $Y=X\t\R$ in a local orthonormal frame $e_1,\ldots, e_n$ of $X$ and where $e_{n+1}=\6/\6 t.$ We have
\[
 P_\Spin(TY)=(P_\Spin(TX)\t\Spin(n+1))/\Spin(n)\t \R
\]
and hence the isomorphisms of representations from Proposition~\ref{bioBprop1} induce isomorphisms $\Phi_{n+1}^{(\pm)}$ between the spinor bundle of $Y$ and the pullback of the spinor bundle of $X$ to $X\t\R.$

\begin{table}[htb]
\centerline{\begin{tabular}{|l|m{2.6cm}|m{3cm}|m{4.4cm}|}
\hline
$\bs n\equiv$ & \bf Spinor bundles & \bf Dirac on $X^n$ & \bf On cylinder $X\t\R$\\\hline
$0$ & $\Si_X^\pm$ real & $\slashed{D}_X^+$ $\R$-linear & $\slashed{D}_Y^\skew=
\begin{pmatrix}
\6/\6 t & \slashed{D}_X^-\\
-\slashed{D}_X^+ & -\6/\6 t
\end{pmatrix}
$ \\\hline
$1$ & $\Si_X^+\cong\Si_X^-$\newline real & $\slashed{D}_X^\skew$ $\R$-linear\newline skew-adjoint & $\slashed{D}_Y^\skew=\ol{\frac{\6}{\6 t}+i(\slashed{D}_X^\skew)_\C}$\\\hline
$2$ & \raisebox{-4pt}{$\Si_X^+\cong\ol{\Si_X^-}$}\newline complex & $\slashed{D}_X^\skew$ $\C$-linear\newline skew-adjoint & $\slashed{D}_Y=\begin{pmatrix}
-i\6/\6 t & -\ol{\slashed{D}_X^\skew}\\
\slashed{D}_X^\skew & i\6/\6 t
\end{pmatrix}
$\\\hline
$3$ & $\Si_X$ quat. & $\slashed{D}_X$ $\H$-linear\newline self-adjoint & $\slashed{D}_Y^+=\frac{\6}{\6 t}+\slashed{D}_X$\\\hline
$4$ & $\Si_X^\pm$ quat. & $\slashed{D}_X^+$ $\H$-linear & $\slashed{D}_Y=\begin{pmatrix}
i\6/\6 t	& \slashed{D}_X^-\\
\slashed{D}_X^+ & -i\6/\6 t
\end{pmatrix}
$\\\hline
$5$ & $\Si_X$ quat. & $\slashed{D}_X$ $\C$-linear\newline self-adjoint (anti-\newline commutes with $J$)& $\slashed{D}_Y=J\bigl(\frac{\6}{\6 t}+\slashed{D}_X\bigr)$\\\hline
$6$ & $\Si_X$ complex & $\slashed{D}_X$ $\C$-anti-linear\newline self-adjoint & $\slashed{D}_Y=I\frac{\6}{\6 t}+\slashed{D}_X$\\\hline
$7$ & $\Si_X$ real & $\slashed{D}_X$ $\R$-linear\newline self-adjoint & $\slashed{D}_Y^+=\frac{\6}{\6 t}+\slashed{D}_X$\\\hline
\end{tabular}}
\caption{Dirac operators on a spin $n$-manifold $X$ and over a collar neighborhood of a spin $(n+1)$-manifold $Y$ which can be identified with a cylinder $X\t[0,\varepsilon)\subset X\t\R,$ up to deformation.}
\label{bioBtab2}
\end{table}

\subsubsection{Case $n\equiv0$}
\label{bioB31}

The map $\Phi_{n+1}$ from Proposition~\ref{bioBprop1}(a) restricts to an isomorphism $\Phi_{n+1}^+\colon\Si_n\to\Si_{n+1}^+$ of the real points. Recall that $\Si_n$ decomposes into the $(\pm1)$-eigenspaces $\Si_n^\pm$ of $\om_n.$ With respect to this decomposition, we have
\ea*
 \slashed{D}_{X\t \R}^\skew&\circ\Phi_{n+1}^+=-I\left(\sum_{k=1}^n\rho_{\De_{n+1}}(e_k)\na_k + \rho_{\De_{n+1}}(e_{n+1})\frac{\6}{\6 t}\right)\circ\Phi_{n+1}^+&& \text{by \eqref{bioBeq2}}\\
 &=\Phi_{n+1}^+\circ\left(\sum_{k=1}^n
 \begin{pmatrix}
 0 & \rho_{\De_n}(e_k)\\
 -\rho_{\De_n}(e_k) & 0
 \end{pmatrix}
 \na_k+
 \begin{pmatrix}
 1 & 0\\
 0 & -1
 \end{pmatrix}
 \frac{\6}{\6 t}\right)&& \text{by \eqref{bioBeq4}}\\
 &=\Phi_{n+1}^+\circ
 \begin{pmatrix}
 	\frac{\6}{\6 t} & \slashed{D}_X^-\\
 	-\slashed{D}_X^+ & -\frac{\6}{\6 t}
 \end{pmatrix}.
\ea*

\subsubsection{Case $n\equiv 1$}
\label{bioB32}

The splitting of $\Si_{n+1}$ into the $(\pm i)$-eigenspaces $\Si_{n+1}^\pm$ of $\om_{n+1}$ corresponds under the isomorphism $\Phi_{n+1}$ from Proposition~\ref{bioBprop1}(b) to the splitting of $\Si_n\op\Si_n$ into the diagonal and anti-diagonal subspaces because
\[
 \rho_{\De_{n+1}}(\om_{n+1})\circ\Phi_{n+1}=\Phi_{n+1}\circ\begin{pmatrix}0 & I\\ I & 0\end{pmatrix}
\]
by \eqref{bioBeq5}. In particular, $\Phi_{n+1}$ induces isomorphisms $\Phi_{n+1}^\pm\colon\Si_n\to\Si_{n+1}^\pm,$ $\phi\mapsto\Phi_{n+1}(\phi,\pm\phi).$ Using \eqref{bioBeq3} and \eqref{bioBeq5} we now compute
\ea*
 \slashed{D}_{X\t \R}^\skew\circ&\Phi_{n+1}^+(\phi)=-J\left(\sum_{k=1}^n \rho_{\De_{n+1}}(e_k)\na_k
 +\rho_{\De_{n+1}}(e_{n+1})\frac{\6}{\6 t}\right)
 \circ\Phi_{n+1}\begin{pmatrix}
 \phi\\
 \phi
 \end{pmatrix}
\\
 &=
 -J\circ\Phi_{n+1}\left(
 \sum_{k=1}^n
 \begin{pmatrix}
 e_k & 0\\
 0 & -e_k	
 \end{pmatrix}
 \begin{pmatrix}\na_k\phi\\\na_k\phi\end{pmatrix}
 +
 \begin{pmatrix}
 	0 & 1\\
 	-1 & 0
 \end{pmatrix}
 \begin{pmatrix}\6\phi/\6 t \\ \6\phi/\6 t\end{pmatrix}
 \right)\\
 &=\Phi_{n+1}
 \begin{pmatrix}
 	0 & -\si_n\\
 	\si_n & 0
 \end{pmatrix}
 \begin{pmatrix}
 	\sum_{k=1}^n e_k\na_k\phi+\6\phi/\6 t\\
 	\sum_{k=1}^n -e_k\na_k\phi-\6\phi/\6 t
 \end{pmatrix}\\
 &=\Phi_{n+1}^+\circ\si_n(\slashed{D}_X+\6/\6 t)\phi
\ea*
Splitting $\Si_n$ further into the eigenspaces $\Si_n^\pm$ of $\si_n,$ we can write this as
\[
 \si_n(\slashed{D}_X+\6/\6 t)=\begin{pmatrix}
	1 & 0\\
	0 & -1
\end{pmatrix}
\begin{pmatrix}
	\6/\6 t & \slashed{D}_X^-\\
	\slashed{D}_X^+ & \6/\6 t
\end{pmatrix}
%=
%\begin{pmatrix}
%\6/\6 t & \slashed{D}_n^-\\
%-\slashed{D}_n^+ & -\6/\6 t
%\end{pmatrix}
=
\ol{\frac{\6}{\6 t}+i(\slashed{D}_X^\skew)_\C}.
\]

\subsubsection{Case $n\equiv 2$}
\label{bioB33}

We have
\ea*
 &\slashed{D}_{X\t\R}\circ\Phi_{n+1}=\left(\sum_{k=1}^n\rho_{\De_{n+1}^+}(e_k)\na_k+\rho_{\De_{n+1}^+}(e_{n+1})\frac{\6}{\6 t}\right)\circ\Phi_{n+1}\\
 &=\Phi_{n+1}\circ\left(\sum_{k=1}^n\rho_{\De_n}(e_k)\na_k-\rho_{\De_n}(\om_n)\frac{\6}{\6 t}\right)=\Phi_{n+1}\circ\begin{pmatrix}
	-i\6/\6 t & \slashed{D}_X^-\\
	\slashed{D}_X^+ & i\6/\6 t
\end{pmatrix},
\ea*
where at the last step we split $\Si_n$ into the $(\pm i)$-eigenspaces $\Si_n^\pm$ of $\om_n.$ The composition of $\Phi_{n+1}\colon\Si_n\to\Si_{n+1}$ with $\id_{\Si_n^+}\op J\colon\Si_n^+\op\ol{\Si_n^+}\to\Si_n^+\op\Si_n^-$ is an isomorphism $\Psi_{n+1}\colon\Si_n^+\op\ol{\Si_n^+}\to\Si_{n+1}.$ The previous calculation and \eqref{bioBeq2} imply
\[
 \slashed{D}_{X\t\R}\circ\Psi_{n+1}=\Psi_{n+1}\circ\begin{pmatrix}
	-i\6/\6 t & -\ol{\slashed{D}_X^\skew}\\
	\slashed{D}_X^\skew & i\6/\6 t
\end{pmatrix}
\]
(the `$i$' in lower right corner stands for the scalar multiplication in $\ol{\Si_n^+},$ hence is equal to the action of $-i$ on $\Si_n^+$).

\subsubsection{Case $n\equiv 3,7$}
\label{bioB34}

Using \eqref{bioBeq7} we find
\ea*
 \slashed{D}_{X\t\R}^+\circ\Phi_{n+1}^+&=\left(\sum_{k=1}^n\rho_{\De_{n+1}}(e_k)\na_k+\rho_{\De_{n+1}}(e_{n+1})\frac{\6}{\6 t}\right)\circ\Phi_{n+1}^+\\
 &=\Phi_{n+1}^-\circ
 \left(
 \sum_{k=1}^n \rho_{\De_n}(e_k)\na_k+\frac{\6}{\6 t}
 \right)=\Phi_{n+1}^-\circ\left(\slashed{D}_X+\frac{\6}{\6 t}\right).
\ea*

\subsubsection{Case $n\equiv 4$}
\label{bioB35}

We have
\ea*
 \slashed{D}_{X\t\R}\circ\Phi_{n+1}&=\left(\sum_{k=1}^n\rho_{\De_{n+1}}(e_k)\na_k+\rho_{\De_{n+1}}(e_{n+1})\frac{\6}{\6 t}\right)\circ\Phi_{n+1}\\
 &=\Phi_{n+1}\circ\left(\rho_{\De_n}(e_k)\na_k+\rho_{\De_n}(\om_n)\frac{\6}{\6 t}\right)&&\text{by \eqref{bioBeq8}}\\
 &=\Phi_{n+1}\circ\begin{pmatrix}
 i\6/\6t & \slashed{D}_X^-\\
 \slashed{D}_X^+ & -i\6/\6t
 \end{pmatrix},
\ea*
where at the last step we split $\Si_n$ into the $(\pm i)$-eigenspaces $\Si_n^\pm$ of $\om_n.$

\subsubsection{Case $n\equiv 5$}
\label{bioB6}

We have
\ea*
 &\slashed{D}_{X\t\R}\circ\Phi_{n+1}=
 \left(\sum_{k=1}^n \rho_{\De_{n+1}}(e_k)\na_k
 +\rho_{\De_{n+1}}(e_{n+1})\frac{\6}{\6 t}\right)
 \mathrlap{\circ\Phi_{n+1}}\\
 &=\rho_{\De_{n+1}}(e_{n+1})\left(\sum_{k=1}^n\rho_{\De_{n+1}}(e_ke_{n+1})\na_k+\frac{\6}{\6 t}\right)\circ\Phi_{n+1}\\
  &=\rho_{\De_{n+1}}(e_{n+1})\circ\Phi_{n+1}\circ\left(\sum_{k=1}^n \rho_{\De_n}(e_k)\na_k+\frac{\6}{\6 t}\right)&&\text{by \eqref{bioBeq9}}\\
 &=\Phi_{n+1}\circ J \left(\sum_{k=1}^n\rho_{\De_{n+1}}(e_k)\na_k+\frac{\6}{\6 t}\right)&&\text{by \eqref{bioBeq9}}\\
 &=\Phi_{n+1}\circ J(\slashed{D}_X+\6/\6 t).
\ea*

\subsubsection{Case $n\equiv 6$}
\label{bioB7}

Using \eqref{bioBeq10} we find
\ea*
 &\slashed{D}_{X\t\R}\circ\Phi_{n+1}=
 \left(\sum_{k=1}^n \rho_{\De_{n+1}^+}(e_k)\na_k
 +\rho_{\De_{n+1}^+}(e_{n+1})\frac{\6}{\6 t}\right)
 \circ\Phi_{n+1}\\
 &=\Phi_{n+1}\circ\left(\sum_{k=1}^n \rho_{\De_n}(e_k)\na_k
 +I\frac{\6}{\6 t}\right)=\Phi_{n+1}\circ\left(\slashed{D}_X + I\frac{\6}{\6 t}\right).
 \ea*

\end{document}